\def\normo#1{\left\|#1\right\|}
\def\normb#1{\Big\|#1\Big\|}
\def\norm#1{\|#1\|}
\def\bra#1{\langle#1\rangle}
\def\wt#1{\widetilde{#1}}
\def\wh#1{\widehat{#1}}
\def\set#1{\{#1\}}
\newcommand{\T}{{\mathbb T}}
\newcommand{\R}{{\mathbb R}}
\newcommand{\Z}{{\mathbb Z}}
\newcommand{\ft}{{\mathcal{F}}}
\newcommand{\N}{{\mathcal{N}}}
\newcommand{\Sch}{{\mathcal{S}}}
\newcommand{\noi}{{\noindent}}
\numberwithin{equation}{section}
\newtheorem{theorem}{Theorem}[section]
\newtheorem{proposition}[theorem]{Proposition}
\newtheorem{lemma}[theorem]{Lemma}
\newtheorem{corollary}[theorem]{Corollary}
\newtheorem{remark}[theorem]{Remark}
\newcommand{\px}{\partial_x}
\newcommand{\pt}{\partial_t}
\begin{document}
\title[Fifth-order KdV equation]{Local well-posedness for the fifth-order KdV equations on $\T$}

\author[C. Kwak]{Chulkwang Kwak}
\email{ckkwak@kaist.ac.kr}
\address{Department of Mathematical Sciences, Korea Advanced Institute of Science and Technology, 291 Daehak-ro, Yuseong-gu, Daejeon 34141, Republic of Korea}

\begin{abstract}

This paper is a continuation of the paper \emph{Low regularity Cauchy problem for the fifth-order modified KdV equations on $\T$} \cite{Kwak2015}. In this paper, we consider the fifth-order equation in the Korteweg-de Vries (KdV) hierarchy as following: 
\begin{equation*}
\begin{cases}
\pt u - \px^5 u - 30u^2\px u + 20 \px u \px^2u + 10u \px^3 u = 0, \hspace{1em} (t,x) \in \R \times \T, \\
u(0,x) = u_0(x) \in H^s(\T)
\end{cases}.
\end{equation*}
 
We prove the local well-posedness of the fifth-order KdV equation for low regularity Sobolev initial data via the energy method. This paper follows almost same idea and argument as in \cite{Kwak2015}. Precisely, we use some conservation laws of the KdV Hamiltonians to observe the direction which the nonlinear solution evolves to. Besides, it is essential to use the short time $X^{s,b}$ spaces to control the nonlinear terms due to \emph{high $\times$ low $\Rightarrow$ high} interaction component in the non-resonant nonlinear term. We also use the localized version of the modified energy in order to obtain the energy estimate. 

As an immediate result from a conservation law in the scaling sub-critical problem, we have the global well-posedness result in the energy space $H^2$.

\end{abstract}

\thanks{}
\thanks{} \subjclass[2010]{35Q53, 37K10} \keywords{The fifth-order KdV equation; Local well-posedness; Energy method; Complete integrability; $X^{s,b}$ space; Modified energy}
\maketitle


\section{Introduction}\label{sec:intro}
The periodic Korteweg-de Vries (KdV) equation
\[\pt u + \px^3 u + 6u\px u = 0\]
is completely integrable in the sense that the equation admits  \emph{Lax pair} representations. Thanks to the inverse spectral method, it is well known that the KdV equation has a global smooth solution for any smooth initial data. Moreover, from the fact that the integrable Hamiltonian systems have the bi-Hamiltonian structure, there are infinitely many equations and corresponding Hamiltonians (so-called KdV hierarchy), and every equation in the hierarchy enjoys all conservation laws. The following are few conservation laws in the hierarchy: 
\begin{equation}\label{eq:hamiltonian}
M[u] := \int \frac12 u ,\quad  E[u] := \int \frac12 u^2 
\end{equation}
In this paper, we consider the following integrable fifth-order KdV equation:
\begin{equation}\label{eq:5kdv}
\begin{cases}
\pt u - \px^5 u - 30u^2\px u + 20 \px u \px^2u + 10u \px^3 u = 0, \hspace{1em} (t,x) \in \R \times \T, \\
u(0,x) = u_0(x) \in H^s(\T)
\end{cases}
\end{equation}
where $\T = [0,2\pi]$. Even if \eqref{eq:5kdv} and the other equations in the hierarchy have the integrable structure, it is still required the analytic theory of nonlinear dispersive equations to solve the low regularity Cauchy problem. In fact, in previous studies on the low regularity well-posedness problem for nonlinear dispersive equations (especially, under the non-periodic setting), the integrable structures were ignored. This work is a continuation of the paper \emph{Low regularity Cauchy problem for the fifth-order modified KdV equations on $\T$} \cite{Kwak2015} to show that, in the periodic setting, the complete integrability is partly needed to study on the low regularity well-posedness problem.\footnote{In fact, even if the integrability is neglected completely, the same result can be obtained for the integrable and also non-integrable equations. See Theorem \ref{thm:nonintegrable}.}

Generalizing coefficients in the nonlinear terms may break the integrable structure. The following equation generalizes \eqref{eq:5kdv} to non-integrable case:
\begin{equation}\label{eq:5kdv1}
\begin{cases}
\pt u - \px^5 u + a_1u^2\px u + a_2 \px u \px^2 u + a_3 u \px^3 u = 0, \hspace{1em} (t,x) \in \R \times \T, \\
u(0,x) = u_0(x) \in H^s(\T),
\end{cases}
\end{equation}
where $a_i$'s, $i=1,2,3$, are real constants. For studying \eqref{eq:5kdv1}, we can rely no longer on the property of the complete integrability.

Meanwhile, once one observes the Fourier coefficients of both \eqref{eq:5kdv} and \eqref{eq:5kdv1} (see \eqref{eq:5kdv2} below), one can find, in the nonlinear interactions, some resonant terms such as 
\[\int_{\T}u(t,x)\;dx \cdot \px^3 u, \hspace{1em} \norm{u(t)}_{L^2}^2\px u,\]
due to \eqref{eq:resonance function1} and \eqref{eq:resonance function2}. We call those terms the \emph{linear-like} resonant terms. Unfortunately, those terms are unfavorable as perturbations of the linear evolution in the low regularity Sobolev spaces. However, \eqref{eq:5kdv} particularly enjoys the Hamiltonian conservation laws in \eqref{eq:hamiltonian}, so all those terms in \eqref{eq:5kdv} change into
\[c_1 \px^3 u+c_2 \px u\]
for constants $c_1 \in \R, c_2 \ge 0$, and hence the linear part of the equation \eqref{eq:5kdv} can be expressed as 
\begin{equation}\label{eq:modified linear operator}
(\px^5 + \cdot \px^3 +c_2\px) u.
\end{equation}
This is one of different points in contrast with the non-periodic problem, and the reason why we focus on not \eqref{eq:5kdv1} but \eqref{eq:5kdv}.

The following is the main result in this paper:
\begin{theorem}\label{thm:main}
Let $s \ge 2$. For any $u_0 \in H^s(\T)$ specified
\begin{equation}\label{eq:level set}
\int_{\T} u_0(x) \; dx = \gamma_1, \hspace{2em} \int_{\T} (u_0(x))^2 \; dx = \gamma_2
\end{equation}
for some $\gamma_1 \in \R$, $\gamma_2 \ge 0$, there exists $T=T(\norm{u_0}_{H^s})>0$ such that \eqref{eq:5kdv} has a unique solution on $[-T,T]$ satisfying
\[u(t,x) \in C([-T,T];H^s(\T)) \cap F^s(T),\]
where the space $F^s(T)$\footnote{This space also depends on the initial data $u_0$ with \eqref{eq:level set}.} will be defined later. Moreover, the flow map $S_T : H^s \to C([-T,T];H^s(\T))$ is continuous on the level set in $H^s$ satisfying \eqref{eq:level set}.
\end{theorem}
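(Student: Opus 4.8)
The plan is to run the now-standard short-time $X^{s,b}$ energy method, in the spirit of \cite{Kwak2015}, after first using the conservation laws to neutralize the \emph{linear-like} resonant terms. I would begin by passing to Fourier variables and isolating, among the quadratic interactions $20\px u\,\px^2 u + 10 u\,\px^3 u$ and the cubic interaction $-30u^2\px u$, the contributions for which one or two of the input frequencies vanish. By the resonance relations referenced in \eqref{eq:resonance function1}--\eqref{eq:resonance function2}, these are precisely the terms $\big(\int_\T u\big)\px^3 u$ and $\norm{u}_{L^2}^2\,\px u$. Since $\int_\T u = \gamma_1$ and $\norm{u}_{L^2}^2 = \gamma_2$ are conserved along the flow (through $M$ and $E$ in \eqref{eq:hamiltonian}), these coefficients are constant in time, so I absorb them into the linear part and rewrite \eqref{eq:5kdv} as $\pt u = (\px^5 + c_1\px^3 + c_2\px)u + \N(u)$ with the modified operator \eqref{eq:modified linear operator}, $c_2\ge 0$, where $\N(u)$ now collects only genuinely non-resonant interactions. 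This is the sole point at which the complete integrability (via the conservation laws) enters.

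Next I would set up the function spaces. Using dyadic Littlewood--Paley pieces localized in time to intervals of length $\sim\bra{n}^{-2}$, a scale adapted to the fifth-order dispersion, I build the short-time solution space $F^s(T)$, a companion space $N^s(T)$ for the nonlinearity, and an energy space which I denote $B^s(T)$. The method then rests on three estimates: (i) a short-time linear estimate $\norm{u}_{F^s(T)} \lesssim \norm{u}_{B^s(T)} + \norm{\N(u)}_{N^s(T)}$; (ii) a nonlinear estimate $\norm{\N(u)}_{N^s(T)} \lesssim \norm{u}_{F^s(T)}^2 + \norm{u}_{F^s(T)}^3$, proved by dyadic case analysis that exploits the large modulation $|\Omega|\sim \bra{n_{\max}}^4\bra{n_{\min}}$ off the resonant set to supply the gain needed to defeat the derivative loss in the high $\times$ low $\Rightarrow$ high regime; and (iii) an energy estimate $\norm{u}_{B^s(T)}^2 \lesssim \norm{u_0}_{H^s}^2 + \norm{u}_{F^s(T)}^3 + \norm{u}_{F^s(T)}^4$.

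Combining (i)--(iii) and writing $X(T):=\norm{u}_{F^s(T)} + \norm{u}_{B^s(T)}$ yields a closed inequality $X(T)^2 \lesssim \norm{u_0}_{H^s}^2 + X(T)^3 + X(T)^4$; since the short-time norms shrink continuously as $T\to 0$, a standard continuity argument gives $X(T)\lesssim \norm{u_0}_{H^s}$ for some $T = T(\norm{u_0}_{H^s})>0$, first for smooth solutions. Existence at the $H^s$ level then follows by approximating $u_0$ with smooth data, using the uniform a priori bound together with a Bona--Smith / Aubin--Lions compactness argument to extract a limit in $C([-T,T];H^s(\T))\cap F^s(T)$. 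Uniqueness and continuity of the flow map on the level set \eqref{eq:level set} follow from an energy estimate for the difference of two solutions, performed if necessary in a weaker norm and upgraded by Bona--Smith, since the difference solves a linearized equation sharing the same modified operator and the same conserved coefficients $c_1,c_2$ on that level set.

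I expect estimate (iii), the energy estimate, to be the crux. The nonlinearity $10u\,\px^3 u$ loses three derivatives, and in the high $\times$ low $\Rightarrow$ high interaction the modulation gain alone does not close the $H^s$ energy balance. The remedy is the \emph{modified energy}: one adds carefully chosen lower-order correction terms to $\tfrac12\norm{u}_{H^s}^2$ so that, after symmetrizing the trilinear form and integrating by parts, the worst derivative-producing contributions cancel. Designing these corrections and verifying that the leftover terms are controlled by $\norm{u}_{F^s(T)}^3$ (ideally with a positive power of $T$), all within the time-localized spaces, is the technical heart of the argument and the step most sensitive to the precise coefficients in \eqref{eq:5kdv}.
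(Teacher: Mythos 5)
Your proposal follows essentially the same route as the paper: absorb the linear-like resonant terms $(\int_\T u)\px^3 u$ and $\norm{u}_{L^2}^2\px u$ into the modified propagator \eqref{eq:modified linear operator} using the conserved quantities \eqref{eq:hamiltonian}, then close the linear/nonlinear/energy triple \eqref{eq:brief proof} in short-time $X^{s,b}$ spaces on the $2^{-2k}$ time scale with a frequency-localized modified energy, and finish with a continuity argument and Bona--Smith for the difference estimate. The only small inaccuracy is that the nonlinearity is not purely non-resonant after the reduction: the diagonal cubic term $30in|\wh{u}(n)|^2\wh{u}(n)$ survives as $N_1(u)$ in \eqref{eq:5kdv3} and must be (and in the paper is, via Lemma \ref{lem:resonant2}) estimated separately, though it is harmless.
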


\begin{remark}
The detailed proof of Theorem \ref{thm:main} follows the same argument as in the proof of Theorem 1.1 in \cite{Kwak2015}. Hence, in this paper, we only give the proofs of nonlinear and energy estimates. For the detailed argument, see \cite{Kwak2015}. 
\end{remark}

By simple calculation, we have 
\[a_1u^2\px u + a_2 \px u \px^2 u + a_3 u \px^3 u = \wt{a}_1\px(u^3) + \wt{a}_2\px(u\px^2 u) + \wt{a}_3\px((\px u)^2).\]
This observation gives the conservation of mean so that we do not need to stick to the integrable structure for $\int_{\T}u(t,x)\;dx \cdot\px^3 u$ term. Moreover, if one defines the nonlinear transformation for $\norm{u(t)}_{L^2}^2\px u$ term  by
\[\mathcal{NT}(u)(t,x) = v(t,x) := \frac{1}{\sqrt{2\pi}}\sum_{n \in \Z} e^{i(nx - 30n \int_0^t \norm{u(s)}_{L^2}^2 \; ds)}\wh{u}(t,n)\]
similarly as in \cite{Kwak2015}, $\norm{u(t)}_{L^2}^2\px u$ term can be also controlled, since it has a good property that the transformation is bi-continuous from the ball in $C([-T,T];H^s)$ to itself for $s \ge 0$\footnote{In \cite{Kwak2015}, the nonlinear transformation is bi-continuous for $s \ge 1/4$ due to the Sobolev embedding, which is used for controlling $\norm{u}_{L^4}$ component. But, in this paper, we do not need to use the Sobolev embedding and hence we have the bi-continuity property of nonlinear transformation for $s \ge 0$.}. Thus, we can also get the following corollary for the non-integrable equation \eqref{eq:5kdv1}:

\begin{corollary}\label{thm:nonintegrable}
Let $s \ge 2$. Then, \eqref{eq:5kdv1} is locally well-posed in $H^s(\T)$.\footnote{Similarly as Theorem \ref{thm:main}, local well-posedness result depends on the initial data in the level set satisfying
\[\int_{\T} u_0(x) \; dx = \gamma,\]
for some $\gamma \in \R$.}
\end{corollary}

From the $H^2$-level conservation law in the hierarchy
\[H_3[u](t) = \int \frac12 u_{xx}^2 -5u\px(u^2) + \frac52u^4 \; dx,\]
we can obtain the global well-posedness for \eqref{eq:5kdv}.
\begin{corollary}
The initial value problem \eqref{eq:5kdv} is globally well-posed in the energy space $H^2(\T)$.
\end{corollary}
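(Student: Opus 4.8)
The plan is to prove global well-posedness for the IVP \eqref{eq:5kdv} in $H^2(\T)$ by combining the local well-posedness statement of Theorem \ref{thm:main} (valid for $s \ge 2$, hence in particular at $s = 2$) with an a priori bound on the $H^2$-norm coming from the conserved quantities. The basic mechanism is the standard continuation argument: since the local existence time $T$ depends only on $\norm{u_0}_{H^s}$, any uniform-in-time bound on $\norm{u(t)}_{H^2}$ immediately lets us iterate the local theory on successive intervals of fixed length and extend the solution to all of $\R$.

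First I would assemble the conserved quantities. The flow preserves $M[u] = \frac12\int u$ and $E[u] = \frac12 \int u^2$ from \eqref{eq:hamiltonian}, so $\norm{u(t)}_{L^2}$ and the mean $\int u(t)$ are constant in time. In addition, the $H^2$-level Hamiltonian
\[
H_3[u](t) = \int \frac12 u_{xx}^2 - 5u\px(u^2) + \frac52 u^4 \; dx
\]
is conserved along the flow. The plan is to show that $H_3[u]$, together with $E[u]$, controls $\norm{u(t)}_{H^2}$ from above uniformly in $t$. The leading term $\frac12\int u_{xx}^2$ is exactly $\frac12\norm{\px^2 u}_{L^2}^2$, so solving for it gives
\[
\frac12\norm{\px^2 u(t)}_{L^2}^2 = H_3[u](0) + 5\int u\px(u^2)\,dx - \frac52\int u^4\,dx,
\]
and the task reduces to bounding the two lower-order integrals in terms of the conserved $L^2$-norm and a small fraction of $\norm{\px^2 u}_{L^2}^2$.

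The main work, and the step I expect to be the principal obstacle, is controlling these lower-order terms so as to absorb them. For the quartic term, Gagliardo--Nirenberg on $\T$ gives $\norm{u}_{L^4}^4 \lesssim \norm{u}_{L^2}^{?}\norm{\px^2 u}_{L^2}^{?} + \norm{u}_{L^2}^4$ with an appropriate exponent split; since $\frac12$ is scaling-subcritical relative to the $\px^2$-derivative, the power of $\norm{\px^2 u}_{L^2}$ appearing is strictly less than $2$, so by Young's inequality this term is bounded by $\varepsilon\norm{\px^2 u}_{L^2}^2 + C(\norm{u}_{L^2})$ for any $\varepsilon > 0$. For the cubic term, integrating by parts to write $\int u\px(u^2)\,dx = -\int \px u \, u^2 \, dx$ and then estimating via Gagliardo--Nirenberg interpolation (controlling $\norm{\px u}_{L^2}$ and $\norm{u}_{L^\infty}$ by $\norm{u}_{L^2}$ and $\norm{\px^2 u}_{L^2}$) again produces a subcritical power of $\norm{\px^2 u}_{L^2}$, hence is likewise absorbable into $\varepsilon\norm{\px^2 u}_{L^2}^2 + C(\norm{u}_{L^2})$. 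This subcriticality is precisely what the abstract refers to as the ``scaling sub-critical problem,'' and it is what makes the a priori estimate closed rather than merely formal.

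Combining, after choosing $\varepsilon$ small I would obtain
\[
\norm{\px^2 u(t)}_{L^2}^2 \le 2H_3[u](0) + C\big(\norm{u_0}_{L^2}\big),
\]
with the right-hand side independent of $t$ because $H_3[u]$ and $\norm{u}_{L^2}$ are conserved. Together with the conservation of $\norm{u(t)}_{L^2}$ and of the mean (which controls the lowest Fourier mode), this yields a uniform bound $\sup_t \norm{u(t)}_{H^2} \le C(\norm{u_0}_{H^2})$. Feeding this into the local theory of Theorem \ref{thm:main} at $s=2$ and reapplying it on consecutive time intervals of the fixed length $T = T(\norm{u_0}_{H^2})$, I extend the unique solution to arbitrary time, establishing global well-posedness in $H^2(\T)$.
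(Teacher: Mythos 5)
Your proposal is correct and is exactly the argument the paper has in mind: the paper offers no separate proof of this corollary beyond the one-line remark that the conserved $H^2$-level Hamiltonian $H_3$ (together with conservation of $\int u$ and $\norm{u}_{L^2}$ and the local theory of Theorem \ref{thm:main} at $s=2$, whose existence time depends only on $\norm{u_0}_{H^2}$) yields a uniform-in-time $H^2$ bound and hence global well-posedness. The only cosmetic remark is that the cubic term as written, $\int u\px(u^2)\,dx = \tfrac{2}{3}\int \px(u^3)\,dx$, actually vanishes identically on $\T$, so your Gagliardo--Nirenberg absorption is needed only for the quartic term (where the subcritical power of $\norm{\px^2 u}_{L^2}$ indeed makes it harmless).
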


The fifth-order KdV equation under the non-periodic setting has been widely studied. It was first studied by Ponce \cite{Ponce1993}. Since the strength of the nonlinearity is stronger than the advantage from the dispersive smoothing effect, it is required the energy method to prove the local well-posedness. Ponce used the energy method to prove the local well-posedness for Sobolev initial data $u_0 \in H^s$, $s \ge 4$, and afterward, Kwon \cite{Kwon2008-1} improved Ponce's result for $s > \frac52$. Kwon also used the energy method with corrections in addition to the refined Strichartz estimate, the Maximal function estimate, and the local smoothing estimate. Recently, Guo, Kwon and the author \cite{GKK2013}, and Kenig and Pilod \cite{KP2015} further improved the local result, independently. The method in both \cite{GKK2013} and \cite{KP2015} is the energy method based on the short time $X^{s,b}$ space, while the key energy estimates were shown by using an additional weight and modified energy, respectively. Similarly as the non-periodic setting, the bilinear estimate in the $X^{s,b}$ space
\[\norm{u \px^3 v}_{X^{s,b-1}} \le C \norm{u}_{X^{s,b}}\norm{v}_{X^{s,b}}\]
fails for all $s$ and $b \in \R$ under the periodic boundary condition. As a minor result in this paper, we have the following theorem:
\begin{theorem}\label{thm:bilinear}
For any $s,b \in \R$, the bilinear estimate
\[\norm{u\px^3v}_{X_{\tau-n^5}^{s,b-1}} \le C\norm{u}_{X_{\tau-n^5}^{s,b}}\norm{v}_{X_{\tau-n^5}^{s,b}}\]
fails.
\end{theorem}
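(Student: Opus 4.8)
The plan is to disprove the estimate by exhibiting an explicit one-parameter family of test functions, indexed by a large integer $N$, for which the left-hand side exceeds the right-hand side by a factor $\sim N^3$. The decisive feature is that, for the configuration I choose, \emph{no} gain is ever extracted from the modulation weights, so the family is insensitive to the precise values of $s$ and $b$; a single family therefore rules out the estimate simultaneously for every $s,b\in\R$. The mechanism is exactly the \emph{linear-like} resonant interaction $\int_\T u\,dx\cdot\px^3 v$ flagged in the introduction, where three derivatives fall on a high-frequency factor while the resonance function degenerates.

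First I would record the Fourier-side description of the interaction. Writing $\sigma=\tau-n^5$ and $\sigma_j=\tau_j-n_j^5$ for the modulation variables, one has
\[\wh{u\px^3v}(\tau,n)=\sum_{n_1+n_2=n}\int_{\tau_1+\tau_2=\tau}(in_2)^3\,\wh{u}(\tau_1,n_1)\,\wh{v}(\tau_2,n_2),\]
and the three modulations obey the resonance identity
\[\sigma=\sigma_1+\sigma_2-H(n_1,n_2),\qquad H(n_1,n_2)=n^5-n_1^5-n_2^5=5\,n_1n_2(n_1+n_2)(n_1^2+n_1n_2+n_2^2).\]
The key observation is that $H$ vanishes identically when $n_1=0$, which is precisely the resonant zero-mode interaction singled out earlier.

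Second, I would take $\wh{u}$ to be the indicator of the unit box $\{n_1=0,\ |\tau_1|\le 1\}$ and $\wh{v}$ the indicator of $\{n_2=N,\ |\tau_2-N^5|\le 1\}$. Then only the term $n_1=0,\ n_2=N$ survives; the output lives at frequency $n=N$ with $\sigma=\sigma_1+\sigma_2$ of size $O(1)$ because $H=0$, and its profile is the convolution of two unit intervals, hence $\gtrsim 1$ on a set of measure $\sim 1$. A direct computation then gives $\norm{u}_{X^{s,b}_{\tau-n^5}}\sim 1$ (the factor is at frequency $0$, so the $\bra{n}^s$ weight is harmless), $\norm{v}_{X^{s,b}_{\tau-n^5}}\sim N^s$, while the three derivatives contribute $|n_2|^3=N^3$ to the output, yielding $\norm{u\px^3v}_{X^{s,b-1}_{\tau-n^5}}\sim N^{s+3}$. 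The ratio of the two sides is thus $\sim N^3\to\infty$, contradicting the claimed bound.

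The point to watch—more a bookkeeping check than a genuine obstacle—is the insensitivity to $s$ and $b$. The $\bra{n}^s$ weights on $v$ and on the output both sit at frequency $N$ and cancel, while $u$ contributes only at frequency $0$; and because the resonance is \emph{exactly} zero for this configuration, every modulation weight $\bra{\sigma}^{b-1},\bra{\sigma_1}^{b},\bra{\sigma_2}^{b}$ stays comparable to $1$ for all $b$. Hence no smoothing can be harvested from the modulation to absorb the $N^3$ loss produced by $\px^3$ landing on the high-frequency factor, and the estimate fails for every $s,b$. (If a counterexample supported away from the zero mode is preferred, the same scheme with $n_1$ fixed and nonzero and $n_2=N$ forces $|H|\sim N^4$; placing this large modulation on the output gives failure for $b>1/4$, and placing it on an input gives failure for $b<3/4$, which together again exhaust all $b\in\R$.)
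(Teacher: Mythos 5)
Your proposal is correct, and your primary construction is genuinely different from the paper's. The paper deliberately stays away from the zero mode: it takes the \emph{non-resonant} high$\times$low configuration $1\times(N-1)\Rightarrow N$ (and its mirror $-(N-1)\times N\Rightarrow 1$), where the resonance function has size $|H|\sim N^4$, and must therefore run two separate computations — placing the large modulation on the output to kill $b>\frac14$, and a dual construction placing it on an input to kill $b\le\frac14$ — exactly as in your closing parenthetical. Your main example instead puts $\wh{u}$ at $n_1=0$, where $H(0,N)=0$ identically, so all three modulations stay $O(1)$ and a single family defeats every $(s,b)$ at once with a loss of $N^3$; the computation is correct and it does disprove the theorem as literally stated. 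What you give up is the information the paper's example is designed to carry: your configuration is precisely the \emph{linear-like} resonant term $\wh{u}(0)\,\px^3 v$ that the author removes at the outset by renormalizing the linear operator via the conservation of the mean (the passage from $\tau-n^5$ to $\tau-\mu(n)$ and the restriction to $\N_{2,n}$), so it says nothing about whether the renormalized, non-resonant bilinear form is controllable — which is the obstruction that actually motivates the short-time $X^{s,b}$ spaces. The paper's example shows the failure persists on the non-resonant set; yours shows only that the unrenormalized estimate fails. Since your parenthetical supplies the paper's two-case argument in full, the proposal is complete on either reading.
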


The counter-example involves in \emph{high $\times$ low $\Rightarrow$ high} interaction component along the non-resonant phenomenon of the following type:
\[(P_{low}u)\cdot (P_{high}v_{xxx}).\]

The fifth-order KdV evolution provides quite strong modulation effect in the nonlinear interaction, but it is not enough to control three derivatives in the high frequency mode. Hence one cannot obtain the bilinear estimate in the standard $X^{s,b}$-norm. This observation gives a clue that the flow map seems not to be uniformly continuous, that is, the Picard iteration method does not work in this problem. The detailed example will be given in Section \ref{sec:bilinear}, later.

So far, we observe two enemies which disturb obtaining the local well-posedness result for the fifth-order KdV equation : \emph{linear-like} resonant terms and the failure of the bilinear estimate in the standard $X^{s,b}$ space.
The first enemy can be overcome by using the theory of complete integrability. From this, the linear operator of \eqref{eq:5kdv} slightly changes as in \eqref{eq:modified linear operator}, and with this, we use the short time modified $X^{s,b}$ to defeat the second enemy. Indeed, $X^{s,b}$ space taken in a short time interval depending on each frequency mode enables to obtain the bilinear estimate since it prevents the modulation to be low. This type of short time structure was first developed by Ionescu, Kenig and Tataru \cite{IKT2008} in the context of KP-I equation. 

Now we briefly give a sketch of the proof of Theorem \ref{thm:main} for self-containedness. The proof is based on the energy method in addition to Bona-Smith argument. As mentioned before, we first modify the linear propagator that absorbs all resonant interaction components. After then, we show following estimates in suitable functions spaces (which will be defined in Section \ref{sec:preliminary}):
\begin{equation}\label{eq:brief proof}
\begin{cases}
\begin{array}{ll}
\norm{u}_{F^s(T)} \lesssim \norm{u}_{E^s(T)} + \norm{\mathcal{N}(u)}_{N^s(T)} &\mbox{(Linear)}\\
\norm{\mathcal{N}(u)}_{N^s(T)} \lesssim \norm{u}_{F^s(T)}^2 + \norm{u}_{F^s(T)}^3 &\mbox{(Nonlinear)}\\
\norm{u}_{E^s(T)}^2 \lesssim (1+\norm{u_0}_{H^s})\norm{u_0}_{H^s}^2 + (1+\norm{u}_{F^s(T)}+\norm{u}_{F^s(T)}^2)\norm{u}_{F^s(T)}^3 &\mbox{(Energy)}
\end{array}
\end{cases}
\end{equation}
By the continuity argument, one can complete the local well-posedness of \eqref{eq:5kdv}.\footnote{To complete the local well-posedness argument, one needs to obtain similar estimates as in \eqref{eq:brief proof} for the difference of two solutions as well. However, the energy estimate for the difference of two solutions cannot be obtained in only $F^s$ space due to the lack of the symmetry among functions, so Bona-Smith argument (energy estimate in the intersection of the weaker ($F^0$) and the stronger ($F^{2s}$) spaces) is essential to close the energy estimate.}

On the other hand, in the second estimation in \eqref{eq:brief proof}, we can find the other different thing in contrast with the non-periodic problem. In view of, in particular, the $L^2$-block estimates (see Lemma \ref{lem:bi-L2} below) comparing with Lemma 3.1 in \cite{GKK2013}, since there is no dispersive smoothing effect under the periodic setting, we have worse estimates in the $L^2$-block estimates. Nevertheless, the short time length $(\approx 2^{-2k})$ at the $2^k$-frequency piece gives an advantage of the low modulation effect (two derivative gains: $|\tau - \mu(n)| \gtrsim 2^{2k}$), so the short time structure can cover the lack of the dispersive smoothing effect. 

Moreover, similarly as in \cite{Kwak2015} in the context of the fifth-order modified KdV equation on $\T$, we have to use the frequency localized modified energy in order to obtain the last estimation in \eqref{eq:brief proof}. Since the \emph{high-low} interaction component, when three derivatives are in the high frequency mode, is uncontrollable in even short time $F^s$ norm, the modified energy helps move two derivatives from the high frequency mode to the low frequency mode, and hence one can obtain the energy of solutions in $F^s$ space. For the non-periodic problem, the same difficulty appears in the same component only when the low frequency component has the largest modulation since there is dispersive smoothing effect in the non-periodic evolution. In that case, the modified energy still works (see \cite{KP2015}) and an additional weight works as well (see \cite{GKK2013}). We also encounter the technical difficulty to deal with new cubic resonant terms in the energy estimate. Fortunately, thanks to the symmetry among frequencies, all cubic resonant components do not make a difficulty no more (see Remarks \ref{rem:resonant3} and \ref{rem:resonant4} in Section \ref{sec:energy}).

The paper is organized as follows: In Section \ref{sec:preliminary}, we summarize some notations and define function spaces. In Section \ref{sec:bilinear}, we prove Theorem \ref{thm:bilinear} by giving a counter example. In Section \ref{sec:L2 block estimate}, we show the $L^2$ block bi- and trilinear estimates which are useful to obtain nonlinear and energy estimates. In Sections \ref{sec:nonlinear} and \ref{sec:energy}, we prove the nonlinear estimate and energy estimate, respectively.

\textbf{Acknowledgement.} The author would like to thank his advisor Soonsik Kwon for his helpful comments and encouragement through this research problem. Moreover, the author is grateful to Zihua Guo for his helpful advice to understand well the short time $X^{s,b}$ structure under the periodic setting. C.K. is partially supported by NRF(Korea) grant 2015R1D1A1A01058832.

\section{Preliminaries}\label{sec:preliminary}
For $x,y \in \R_+$, $x \lesssim y$ means that there exists $C>0$ such that $x \le Cy$, and $x \sim y$ means $x \lesssim y$ and $y\lesssim x$. We also use $\lesssim_s$ and $\sim_s$ as similarly, where the implicit constants depend on $s$. Let $a_1,a_2,a_3 \in \R$. The quantities $a_{max} \ge a_{med} \ge a_{min}$ can be conveniently defined to be the maximum, medium and minimum values of $a_1, a_2, a_3$ respectively.

For $Z = \R$ or $\Z$, let $\Gamma_k(Z)$ denote $(k-1)$-dimensional hyperplane by 
\[\set{\overline{x} = (x_1,x_2,...,x_k) \in Z^k : x_1 +x_2 + \cdots +x_k= 0}.\] 

For $f \in \Sch '(\R \times \T) $ we denote by $\wt{f}$ or $\ft (f)$ the Fourier transform of $f$ with respect to both spatial and time variables,
\[\wt{f}(\tau,n)=\frac{1}{\sqrt{2\pi}}\int_{\R}\int_{0}^{2\pi} e^{-ixn}e^{-it\tau}f(t,x)\; dxdt .\]
Moreover, we use $\ft_x$ (or $\wh{\;}$ ) and $\ft_t$ to denote the Fourier transform with respect to space and time variable respectively.

From the simple calculation
\[30u^2 u_x = 10(u^3)_x \hspace{1em} \mbox{and} \hspace{1em} 20u_xu_{xx} + 10 uu_{xxx} = 5(u_x^2)_x + 10(uu_{xx})_x,\]
we observe the Fourier coefficient in the spatial variable of \eqref{eq:5kdv} as
\begin{equation}\label{eq:5kdv2}
\begin{split}
\pt\wh{u}(n) - in^5\wh{u}(n) &= 10in \sum_{n_1+n_2+n_3=n} \wh{u}(n_1)\wh{u}(n_2)\wh{u}(n_3)\\
&+5in \sum_{n_1+n_2=n} n_1\wh{u}(n_1)n_2\wh{u}(n_2) \\
&+10in \sum_{n_1+n_2=n} \wh{u}(n_1)n_2^2\wh{u}(n_2).
\end{split}
\end{equation}
We consider the resonant relations for the quadratic and cubic terms in the right-hand side of \eqref{eq:5kdv2}
\begin{align}
H_2 &= H_2(n_1,n_2) := (n_1+n_2)^5 - n_1^5 - n_2^5 = \frac52n_1n_2(n_1+n_2)(n_1^2 + n_2^2 + (n_1+n_2)^2), \label{eq:resonance function1}\\
H_3 &= H_3(n_1,n_2,n_3):= (n_1+n_2+n_3)^5 - n_1^5 - n_2^5 - n_3^5 \nonumber \\
&= \frac52(n_1+n_2)(n_1+n_2)(n_2+n_3)(n_1^2+n_2^2+n_3^2+(n_1+n_2+n_3)^2). \label{eq:resonance function2}
\end{align}
Then we can observe that the resonant phenomenon appears only when $n_1n_2(n_1+n_2) = 0$ and $(n_1+n_2)(n_1+n_2)(n_2+n_3)=0$ in the quadratic and cubic terms, respectively. By using the conservation laws in \eqref{eq:hamiltonian} and gathering resonant terms in right-hand side of \eqref{eq:5kdv2}, we can rewrite \eqref{eq:5kdv2} as following:
\begin{equation}\label{eq:5kdv3}
\begin{split}
\pt\wh{u}(n) - i(n^5 + c_1n^3 + c_2n)\wh{u}(n) &= 30in|\wh{u}(n)|^2\wh{u}(n)\\
&+10in \sum_{\N_{3,n}} \wh{u}(n_1)\wh{u}(n_2)\wh{u}(n_3)\\
&+5in \sum_{\N_{2,n}} n_1\wh{u}(n_1)n_2\wh{u}(n_2) \\
&+10in \sum_{\N_{2,n}} \wh{u}(n_1)n_2^2\wh{u}(n_2)\\
&:= \wh{N}_1(u) + \wh{N}_2(u) + \wh{N}_3(u) + \wh{N}_4(u),
\end{split}
\end{equation}
where $c_1 = 10\wh{u}_0(0)$, $c_2 = 30 \norm{u_0}_{L^2}^2$, 
\[\N_{2,n} = \set{(n_1,n_2) \in \Z^2 : n_1+n_2=n \hspace{1em} \mbox{and} \hspace{1em} n_1n_2(n_1+n_2) \neq 0}\]
and
\[\N_{3,n} = \set{(n_1,n_2,n_3) \in \Z^3 : n_1+n_2+n_3=n \hspace{1em} \mbox{and} \hspace{1em} (n_1+n_2)(n_1+n_2)(n_2+n_3) \neq 0}.\]
We call the first term of the right-hand side of \eqref{eq:5kdv3} the \emph{Resonant} term and the others \emph{Non-resonant} term. We simply generalize $N_i(u)$ as $N_i(u,v)$, $i=3,4$, and $u_i(u,v,w)$, $i=1,2$, for the quadratic and cubic term.

We introduce that $X^{s,b}$-norm associated to \eqref{eq:5kdv3} which is given by 
\[\norm{u}_{X^{s,b}}=\norm{\bra{ \tau - \mu(n)}^b\bra{n}^s \ft(u)}_{L_{\tau}^2(\R;\ell_n^2(\Z))},\]
where 
\begin{equation}\label{eq:evolution}
\mu(n) = n^5 + c_1n^3 + c_2n
\end{equation}
and $\bra{\cdot} = (1+|\cdot|^2)^{1/2}$. The $X^{s,b}$ space turns out to be very useful in the study of low-regularity theory for the dispersive equations. The restricted norm method was first implemented in its current form by Bourgain \cite{Bourgain1993} and further developed by Kenig, Ponce and Vega \cite{KPV1996} and Tao \cite{Tao2001}.

Let $\Z_+ = \Z \cap [0,\infty]$. For $k \in \Z_+$, we set
\[I_0 = \set{n \in \Z : |n| \le 2} \hspace{1em} \mbox{ and } \hspace{1em} I_k = \set{n \in \Z : 2^{k-1} \le |n| \le 2^{k+1}}, \hspace{1em} k \ge 1.\]

Let $\eta_0: \R \to [0,1]$ denote a smooth bump function supported in $ [-2,2]$ and equal to $1$ in $[-1,1]$ with the following property of regularities:
\begin{equation}\label{eq:regularity}
\partial_n^{j} \eta_0(n) = O(\eta_0(n)/\bra{n}^j), \hspace{1em} j=0,1,2.
\end{equation}
For $k \in \Z_+ $, let 
\begin{equation}\label{eq:cut-off1}
\chi_0(n) = \eta_0(n), \hspace{1em} \mbox{and} \hspace{1em} \chi_k(n) = \eta_0(n/2^k) - \eta_0(n/2^{k-1}), \hspace{1em} k \ge 1,
\end{equation}
which is supported in $I_k$, and
\[\chi_{[k_1,k_2]}=\sum_{k=k_1}^{k_2} \chi_k \quad \mbox{ for any} \ k_1 \le k_2 \in \Z_+ .\]
$\{ \chi_k \}_{k \in \Z_+}$ is the inhomogeneous decomposition function sequence to the frequency space. For $k\in \Z_+$ let $P_k$ denote the
operators on $L^2(\T)$ defined by $\widehat{P_kv}(n)=\chi_k(n)\wh{v}(n)$. For $l\in \Z_+$ let
\[P_{\le l}=\sum_{k \le l}P_k, \quad P_{\ge l}=\sum_{k \ge l}P_k.\]
For the time-frequency decomposition, we use the cut-off function $\eta_j$, but the same as $\eta_j(\tau-\mu(n)) = \chi_j(\tau-\mu(n))$.

For $k,j \in \Z_+$ let
\[D_{k,j}=\{(\tau,n) \in \R \times \Z : \tau - \mu(n) \in I_j, n \in I_k \}, \hspace{2em} D_{k,\le j}=\cup_{l\le j}D_{k,l}.\]

For $k \in \Z_+$, we define the $X^{s,\frac12,1}$-type space $X_k$ for frequency localized functions,
\begin{eqnarray}\label{eq:Xk}
X_k=\left\{
\begin{array}{l}
f\in L^2(\R \times \Z): f(\tau,n) \mbox{ is supported in } \R \times I_k   \mbox{ and }\nonumber\\
\norm{f}_{X_k}:=\sum_{j=0}^\infty 2^{j/2}\norm{\eta_j(\tau-\mu(n))\cdot f(\tau,n)}_{L_{\tau}^2\ell_n^2}<\infty
\end{array}
\right\}.
\end{eqnarray}

As in \cite{IKT2008}, at frequency $2^k$ we will use the $X^{s,\frac12,1}$ structure given by the $X_k$-norm, uniformly on the $2^{-2k}$ time scale. For $k\in \Z_+$, we define function spaces
\begin{eqnarray*}
&& F_k=\left\{
\begin{array}{l}
f\in L^2(\R \times \T): \widehat{f}(\tau,n) \mbox{ is supported in } \R \times I_k \mbox{ and } \\
\norm{f}_{F_k}=\sup\limits_{t_k\in \R}\norm{\ft[f\cdot\eta_0(2^{2k}(t-t_k))]}_{X_k}<\infty
\end{array}
\right\},
\\
&&N_k=\left\{
\begin{array}{l}
f\in L^2(\R \times \T): \widehat{f}(\tau,n) \mbox{ is supported in } \R \times I_k \mbox{ and }  \\
\norm{f}_{N_k}=\sup\limits_{t_k\in \R}\norm{(\tau-\mu(n)+i2^{2k})^{-1}\ft[f\cdot\eta_0(2^{2k}(t-t_k))]}_{X_k}<\infty
\end{array}
\right\}.
\end{eqnarray*}
Since the spaces $F_k$ and $N_k$ are defined on the whole line in time variable, we define then local-in-time versions of the spaces in standard ways. For $T\in
(0,1]$ we define the normed spaces
\begin{align*}
F_k(T)=&\{f\in C([-T,T]:L^2): \norm{f}_{F_k(T)}=\inf_{\wt{f}=f \mbox{ in } [-T,T] \times \T }\norm{\wt f}_{F_k}\},\\
N_k(T)=&\{f\in C([-T,T]:L^2): \norm{f}_{N_k(T)}=\inf_{\wt{f}=f \mbox{ in } [-T,T] \times \T }\norm{\wt f}_{N_k}\}.
\end{align*}
We assemble these dyadic spaces in a Littlewood-Paley manner. For $s\geq 0$ and $T\in (0,1]$, we define function spaces solutions and
nonlinear terms:
\begin{eqnarray*}
&&F^{s}(T)=\left\{ u: \norm{u}_{F^{s}(T)}^2=\sum_{k=0}^{\infty}2^{2sk}\norm{P_k(u)}_{F_k(T)}^2<\infty \right\},
\\
&&N^{s}(T)=\left\{ u: \norm{u}_{N^{s}(T)}^2=\sum_{k=0}^{\infty}2^{2sk}\norm{P_k(u)}_{N_k(T)}^2<\infty \right\}.
\end{eqnarray*}

The solution space $F^s(T)$ is well-embedded in the classical solution space $C([-T,T];H^s)$.
\begin{proposition}\label{prop:small data1-1}
Let $s \ge 0$, $T \in (0,1]$ and $v \in F^s(T)$, then
\begin{equation}\label{eq:small data1.1}
\sup_{t \in [-T,T]} \norm{v(t)}_{H^s(\T)} \lesssim \norm{v}_{F^s(T)}. 
\end{equation} 
\end{proposition}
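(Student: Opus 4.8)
The goal is to show the frequency-localized short-time norm $F^s(T)$ controls the classical Sobolev norm uniformly in time. The plan is to reduce to the single-block estimate
\[
\sup_{t \in [-T,T]} \norm{P_k v(t)}_{L^2(\T)} \lesssim \norm{P_k v}_{F_k(T)},
\]
after which squaring, multiplying by $2^{2sk}$, and summing in $k$ (using the Littlewood-Paley definitions of $\norm{\cdot}_{F^s(T)}$ and $\norm{\cdot}_{H^s}$, together with almost-orthogonality of the $P_k$) gives \eqref{eq:small data1.1}. So the entire content lives at a fixed dyadic frequency $2^k$.

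For the block estimate I would argue as follows. Fix $k$ and let $\wt{v}$ be an extension of $P_k v$ off $[-T,T]$ that nearly attains the $F_k$ infimum, so $\norm{\wt v}_{F_k} \le 2\norm{P_k v}_{F_k(T)}$. Pick any $t_0 \in [-T,T]$. Using the localizing bump $\eta_0(2^{2k}(t-t_0))$ built into the definition of $F_k$, set $g := \wt v \cdot \eta_0(2^{2k}(t-t_0))$, which equals $\wt v(t_0)$ in value at $t=t_0$ and satisfies $\norm{\ft g}_{X_k} \le \norm{\wt v}_{F_k}$. The task is then to bound $\norm{g(t_0)}_{L^2}$ by $\norm{\ft g}_{X_k}$. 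Writing $g(t_0,x)$ via Fourier inversion in $\tau$ and applying Cauchy-Schwarz in $\tau$ against the weight $\bra{\tau-\mu(n)}^{1/2}$, the key point is that the $X_k$-norm carries the $2^{j/2}$ factor summed over dyadic modulation shells $I_j$; this is exactly the $X^{s,1/2,1}$ structure that embeds into $L^\infty_t L^2_x$. Concretely, for a fixed $n$,
\[
|\wh{g}(t_0,n)| = \Big| \frac{1}{\sqrt{2\pi}}\int_\R e^{it_0 \tau}\, \ft g(\tau,n)\,d\tau \Big| \le \frac{1}{\sqrt{2\pi}}\sum_{j\ge 0}\int_{I_j}|\ft g(\tau,n)|\,d\tau,
\]
and on each shell Cauchy-Schwarz gives a factor $|I_j|^{1/2}\sim 2^{j/2}$ times $\norm{\eta_j(\tau-\mu(n))\ft g(\cdot,n)}_{L^2_\tau}$. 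Squaring, summing in $n$, and recognizing the resulting expression as $\norm{\ft g}_{X_k}^2$ (the $\ell^1_j$ summation of the $2^{j/2}$-weighted $L^2_\tau\ell^2_n$ pieces being precisely the $X_k$-norm) closes the bound, with a constant independent of $t_0$ and of $k$.

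The main obstacle, and the only place requiring genuine care, is the passage from the $\ell^2$-in-$j$ structure naively produced by Cauchy-Schwarz to the $\ell^1$-in-$j$ summation that the $X_k$-norm actually demands; this is what forces the $X^{s,1/2,1}$ (rather than $X^{s,1/2}$) norm and is the standard reason such embeddings hold. I would handle it by keeping the $\sum_j$ outside throughout and applying Minkowski's inequality in the $\ell^2_n$ norm before summing the $2^{j/2}$-weighted pieces, so that the triangle inequality in $j$ lands exactly on the defining sum of $\norm{\cdot}_{X_k}$. A secondary point is checking that taking the infimum over extensions and the supremum over $t_0\in[-T,T]$ commute harmlessly: since the bound holds for every extension and every $t_0$ with a uniform constant, one takes the supremum in $t_0$ first and then the infimum over extensions, which is legitimate. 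The uniformity of the constant in $k$ is automatic because no frequency-dependent factor enters the block estimate, so the Littlewood-Paley reassembly in the first paragraph goes through without loss.
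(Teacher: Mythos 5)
Your proof is correct and is exactly the standard argument that the paper's citation to \cite{GKK2013} points to: reduce to a fixed dyadic block, evaluate at $t_0$ using the bump $\eta_0(2^{2k}(t-t_0))$ from the definition of $F_k$, and then use Cauchy--Schwarz on each modulation shell together with Minkowski in $\ell^2_n$, which is precisely why the $\ell^1$-in-$j$ ($X^{s,\frac12,1}$) structure is needed. Note that the key single-block inequality you derive, $\normo{\int_{\R}|f_k(\tau',n)|\,d\tau'}_{\ell_n^2}\lesssim\norm{f_k}_{X_k}$, is already recorded as the last display of Lemma \ref{lem:prop of Xk}, so your argument could be shortened by invoking it directly.
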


\begin{proof}
See \cite{GKK2013} and references therein.
\end{proof}

We define the dyadic energy space as follows: For $s\geq 0$ and $u\in C([-T,T]:H^\infty)$
\begin{eqnarray*}
\norm{u}_{E^{s}(T)}^2=\norm{P_{0}(u(0))}_{L^2}^2+\sum_{k\geq 1}\sup_{t_k\in [-T,T]}2^{2sk}\norm{P_k(u(t_k))}_{L^2}^2.
\end{eqnarray*}

\begin{lemma}[Properties of $X_k$]\label{lem:prop of Xk}
Let $k, l\in \Z_+$ and $f_k\in X_k$. Then
\begin{equation}\label{eq:prop1}
\begin{split}
&\sum_{j=l+1}^\infty 2^{j/2}\normo{\eta_j(\tau-\mu(n))\int_{\R}|f_k(\tau',n)|2^{-l}(1+2^{-l}|\tau-\tau'|)^{-4}d\tau'}_{L_{\tau}^2\ell_n^2}\\
&+2^{l/2}\normo{\eta_{\leq l}(\tau-\mu(n)) \int_{\R}|f_k(\tau',n)| 2^{-l}(1+2^{-l}|\tau-\tau'|)^{-4}d\tau'}_{L_{\tau}^2\ell_n^2}\lesssim\norm{f_k}_{X_k}.
\end{split}
\end{equation}
In particular, if $t_0\in \R$ and $\gamma\in \Sch(\R)$, then
\begin{eqnarray}\label{eq:prop2}
\norm{\ft[\gamma(2^l(t-t_0))\cdot \ft^{-1}(f_k)]}_{X_k}\lesssim
\norm{f_k}_{X_k}.
\end{eqnarray}
Moreover, from the definition of $X_k$-norm,
\[\normo{\int_{\R}|f_k(\tau',n)|\; d\tau'}_{\ell_n^2} \lesssim \norm{f_k}_{X_k}.\]
\end{lemma}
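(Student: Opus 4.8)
The claim splits into three statements about the norm $\|f_k\|_{X_k} = \sum_j 2^{j/2}\|\eta_j(\tau-\mu(n)) f_k\|_{L^2_\tau \ell^2_n}$: a smoothing/convolution estimate (eq. prop1), a modulation-localization estimate (eq. prop2), and an $L^1_\tau$-control statement. Let me think about how to prove each.

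The setup: $X_k$ is an $X^{s,1/2,1}$-type space. The key structural fact is that the norm dyadically decomposes in the modulation variable $\tau - \mu(n)$, with weights $2^{j/2}$ and an $\ell^1$ sum over dyadic modulation blocks $j$. The kernel appearing in prop1 is $K_l(\tau) = 2^{-l}(1+2^{-l}|\tau|)^{-4}$, which is essentially $L^1$-normalized and concentrated on scale $2^l$.

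Let me think about the convolution estimate (prop1) first.
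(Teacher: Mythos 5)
Your proposal stops before any actual argument begins: after correctly identifying the structure of the $X_k$-norm and observing that the kernel $K_l(\tau)=2^{-l}(1+2^{-l}|\tau|)^{-4}$ is $L^1$-normalized and concentrated at scale $2^l$, the text ends with ``Let me think about the convolution estimate (prop1) first'' and never supplies a proof of \eqref{eq:prop1}, \eqref{eq:prop2}, or the final $\ell^2_n$ bound. What you have written is a correct framing of the problem, not a proof, so there is nothing yet to compare against the paper's argument (which, for what it is worth, the paper itself does not spell out either --- it cites \cite{GKK2013}, remarking that the proof only involves the summation over modulations and is identical in the periodic and non-periodic settings).

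To close the gap you would need, at minimum, the following chain: decompose $f_k=\sum_{j'}f_{k,j'}$ into modulation blocks; for each pair $(j,j')$ estimate $\|\eta_j\,(|f_{k,j'}|\ast K_l)\|_{L^2_\tau\ell^2_n}$ by distinguishing whether $j'\ge j$ (where Young's inequality with $\|K_l\|_{L^1}\lesssim 1$ gives $\lesssim\|f_{k,j'}\|_{L^2_\tau\ell^2_n}$ and the factor $2^{j/2}\le 2^{j'/2}$ is absorbed) or $j'<j\,$ with $j>l$ (where the rapid decay of $K_l$ off scale $2^l$ yields a gain of $2^{-4(j-l)}$ or similar, enough to sum the geometric factor $2^{(j-j')/2}$); the second term in \eqref{eq:prop1} is handled by Cauchy--Schwarz in $\tau$ on the block $|\tau-\mu(n)|\lesssim 2^l$ together with Young's inequality. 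Estimate \eqref{eq:prop2} then follows by writing $\ft_t[\gamma(2^l(t-t_0))]$ as a kernel dominated by a constant multiple of $K_l$ (up to modulation), and the last inequality follows from Cauchy--Schwarz on each modulation block: $\int_{I_{j}}|f_k(\tau',n)|\,d\tau'\le 2^{j/2}\|f_{k,j}(\cdot,n)\|_{L^2_{\tau'}}$, summed over $j$. None of these steps appears in your proposal.
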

\begin{proof}
The proof of Lemma \ref{lem:prop of Xk} only depends on the summation over modulations, and there is no difference between the proof in the non-periodic and periodic settings. Hence we omit details and see \cite{GKK2013}.
\end{proof}

\begin{remark}\label{rem:modified space}
To prove Theorem \ref{thm:nonintegrable}, we can also define function spaces $\bar{X}_k$, $\bar{F}_k$, $\bar{N}_k$, $\bar{F}^s$ and $\bar{N}_k$ by using
\[\bar{\mu}(n) = n^5 + c_1n^3\]
instead of \eqref{eq:evolution}. 
\end{remark}

As in \cite{IKT2008}, for any $k\in \Z_+$ we define the set $S_k$ of $k$-\emph{acceptable} time multiplication factors 
\[S_k=\{m_k:\R\rightarrow \R: \norm{m_k}_{S_k}=\sum_{j=0}^{10} 2^{-2jk}\norm{\partial^jm_k}_{L^\infty}< \infty\}.\] 
Direct estimates using the definitions and \eqref{eq:prop2} show that for any $s\geq 0$ and $T\in (0,1]$
\[\begin{cases}
\normb{\sum\limits_{k\in \Z_+} m_k(t)\cdot P_k(u)}_{F^{s}(T)}\lesssim (\sup_{k\in \Z_+}\norm{m_k}_{S_k})\cdot \norm{u}_{F^{s}(T)};\\
\normb{\sum\limits_{k\in \Z_+} m_k(t)\cdot P_k(u)}_{N^{s}(T)}\lesssim (\sup_{k\in \Z_+}\norm{m_k}_{S_k})\cdot \norm{u}_{N^{s}(T)};\\
\normb{\sum\limits_{k\in \Z_+} m_k(t)\cdot P_k(u)}_{E^{s}(T)}\lesssim (\sup_{k\in \Z_+}\norm{m_k}_{S_k})\cdot \norm{u}_{E^{s}(T)}.
\end{cases}\]

\section{Proof of Theorem \ref{thm:bilinear}}\label{sec:bilinear}
In this section, we show the Theorem \ref{thm:bilinear}. The proof basically follows from the section 6 in \cite{KPV1996} associated to the KdV equation. As mentioned in the introduction, we observe the \emph{high $\times$ low $\Rightarrow$ high} interaction component in the non-resonance phenomenon, while, Kenig, Ponce, and Vega focused on the \emph{high $\times$ high $\Rightarrow$ high} interaction component. Actually, our examples of the KdV equation can be easily controlled in $X^{s,\frac12}$, because the size of maximum modulation is comparable to the square of high frequency size ($\approx N^2$) and hence this factor exactly eliminates the one derivative in the nonlinear term. In contrast to this, \eqref{eq:5kdv} has two more derivatives in nonlinear terms, and thus, one cannot control the this component in $X^{s,b}$-norm, although the advantage of the non-resonant effect is better than that of KdV equation. Now, we give examples satisfying
\begin{equation}\label{eq:fail}
\norm{u\px^3v}_{X^{s,b-1}} \nleq C\norm{u}_{X^{s,b}}\norm{v}_{X^{s,b}}.
\end{equation}
In the case of our examples, the bilinear estimate does not depend on the regularity $s$. So, it suffices to show \eqref{eq:fail} for any $b \in \R$. Fix $N \gg 1$. We first consider when $b > \frac14$. Let us define the functions
\[f(\tau,n) = a_n \chi_{\frac12}(\tau-n^5), \hspace{2em} g(\tau,n) = b_n \chi_{\frac12}(\tau-n^5),\]
where
\begin{equation*}
a_n = \begin{cases}1, \hspace{0.5em} n=1\\0, \hspace{0.5em}otherwise\end{cases} \hspace{1em}\mbox{and}\hspace{1em}b_n = \begin{cases}1, \hspace{0.5em}n=N-1\\0, \hspace{0.5em}otherwise\end{cases}.
\end{equation*}
We focus on the case that $|\tau- n^5|$ is the maximum modulation case. We put 
\[\wt{u}(\tau,n) = f(\tau,n) \hspace{2em} \wt{v}(\tau,n) = g(\tau,n),\]
then we need to calculate $\ft[u\px^3v](\tau,n)$. Since $\ft[u\px^3v](\tau,n) = (f \ast g)(\tau,n)$, performing the summation and integration with respect to $n_1, \tau_1$ variables gives
\begin{align*}
(f \ast g)(\tau,n) &= \sum_{n_1}a_{n_1}b_{n-n_1} \int_{\R}\chi_{\frac12}(\tau_1-n_1^5)\chi_{\frac12}(\tau-\tau_1-(n-n_1)^5)\; d\tau_1\\
&\cong c \sum_{n_1}a_{n_1}b_{n-n_1} \chi_{1}(\tau-n^5 + \frac52nn_1(n-n_1)(n^2+n_1^2 + (n-n_1)^2))\\
&\cong c \alpha_{n}\chi_{1}(\tau-n^5 +\frac52N(N-1)(N^2+1+(N-1)^2)),
\end{align*}
where 
\[\alpha_n = \begin{cases}1, \hspace{0.5em}n=N\\0, \hspace{0.5em}otherwise\end{cases}.\]
On the support of $(f \ast g)(\tau,n)$, since we have $|\tau - n^5| \sim N^4$, we finally obtain
\begin{align*}
\norm{u\px^3v}_{X^{s,b-1}} &= \norm{\bra{n}^s\bra{\tau-n^5}^{b-1}\ft[u\px^3v](\tau,n)}_{L_{\tau}^2\ell_n^2}\\
&\sim N^sN^3N^{4(b-1)},
\end{align*}
while
\[\norm{u}_{X^{s,b}}\norm{v}_{X^{s,b}} \sim N^s.\]
This imposes $b \le \frac14$ to succeed the bilinear estimate and hence, we show \eqref{eq:fail} when $b > \frac14$.

We now construct an example when $b \le \frac14$ and focus on the case that $|\tau - n^5|$ is too much smaller than the maximum modulation. In this case, we may assume that $|\tau_1-n_1^5|$ is the maximum modulation by symmetry of modulations. Set
\begin{equation*}
a_n = \begin{cases}1, \hspace{0.5em}n=-(N-1)\\0, \hspace{0.5em}otherwise\end{cases} \hspace{2em}\mbox{and}\hspace{2em}b_n = \begin{cases}1, \hspace{0.5em}n=N\\0, \hspace{0.5em}otherwise\end{cases}
\end{equation*}
and
\[f(\tau,n) = a_n \chi_{\frac11}(\tau-n^5), \hspace{1em} g(\tau,n) = b_n \chi_{\frac12}(\tau-n^5).\]
From the duality and change of variables, it suffices to consider
\[\norm{u\px^3v}_{X_{\tau-n^5}^{-s,-b}} \le C\norm{u}_{X_{\tau-n^5}^{-s,1-b}}\norm{v}_{X_{\tau-n^5}^{s,b}},\]
where 
\[\wt{u}(\tau,n) = f(\tau,n) \hspace{2em} \wt{v}(\tau,n) = g(\tau,n).\]
Similarly as before, we need to calculate $\ft[u\px^3v](\tau,n)$. Since $\ft[u\px^3v](\tau,n) = (f \ast g)(\tau,n)$, performing the summation and integration with respect to $n_1, \tau_1$ variables  gives
\begin{align*}
(f \ast g)(\tau,n) &= \sum_{n_1}a_{n_1}b_{n-n_1} \int_{\R}\chi_{\frac12}(\tau_1-n_1^5)\chi_{\frac12}(\tau-\tau_1-(n-n_1)^5)\; d\tau_1\\
&\cong c \sum_{n_1}a_{n_1}b_{n-n_1} \chi_{1}(\tau_2-n_2^5 + \frac52nn_1(n-n_1)(n^2+n_1^2 + (n-n_1)^2))\\
&\cong c \alpha_{n}\chi_{1}(\tau-n^5 -\frac52N(N-1)(N^2+(N-1)^2+1)),
\end{align*}
where 
\[\alpha_n = \begin{cases}1, \hspace{0.5em}n=1\\0, \hspace{0.5em}otherwise\end{cases}.\]
On the support of $(f \ast g)(\tau,n)$, since we have $|\tau - n^5| \sim N^4$, we finally obtain
\begin{align*}
\norm{u\px^3v}_{X^{-s,-b}} &= \norm{\bra{n}^{-s}\bra{\tau-n^5}^{-b}\ft[u\px^3v](\tau,n)}_{L_{\tau}^2\ell_n^2}\\
&\sim N^3N^{-4b},
\end{align*}
while
\[\norm{u}_{X^{-s,1-b}}\norm{v}_{X^{s,b}} \sim N^{-s}N^s \sim 1.\]
This imposes $b \ge \frac34$ and hence, we show \eqref{eq:fail} when $b \le \frac14$, which complete the proof of Theorem \ref{thm:bilinear}.

\section{$L^2$-block estimates}\label{sec:L2 block estimate}
In this section, we will give $L^2$-block estimates for bilinear estimates. For $n_1,n_2 \in \Z$, let
\[G(n_1,n_2) = \mu(n_1) + \mu(n_2) - \mu(n_1+n_2)\]
be the resonance function, which plays an important role in the bilinear $X^{s,b}$-type estimates. 

Let $\zeta_i = \tau_i - \mu(n_i)$. For compactly supported functions $f_i \in L^2(\R \times \T)$, $i=1,2,3$, we define 
\[J(f_1,f_2,f_3) = \sum_{n_3, \overline{\N}_{2,n_3}}\int_{\overline{\zeta} \in \Gamma_3(\R)}f_1(\zeta_1,n_1)f_2(\zeta_2,n_2)f_3(\zeta_3 + G(n_1,n_2),n_3),\]
where $\overline{\N}_{2,n_3}= \N_{2,-n_3}$ and $\overline{\zeta} = (\zeta_1,\zeta_2,\zeta_3+G(n_1,n_2))$. From the identities
\[n_1+n_2+n_3=0\]
and
\[\zeta_1+\zeta_2+\zeta_3 + G(n_1,n_2) =0\]
on the support of $J(f_1,f_2,f_3)$, we see that $J(f_1,f_2,f_3)$ vanishes unless
\begin{equation}\label{eq:support property}
\begin{array}{c}
2^{k_{max}} \sim 2^{k_{sub}}\\
2^{j_{max}} \sim \max(2^{j_{sub}}, |G|),
\end{array}
\end{equation}
where $|n_i| \sim 2^{k_i}$ and $|\zeta_i| \sim 2^{j_i}$, $i=1,2,3,4$. By simple change of variables in the summation and integration, we have
\[|J(f_1,f_2,f_3)|=|J(f_2,f_1,f_3)|=|J(f_3,f_2,f_1)|=|J(\overline{f}_1,f_2,f_3)|,\]
where $\overline{f}(\tau,n) = f(-\tau,-n)$.

\begin{lemma}\label{lem:bi-L2}
Let $k_i, j_i\in \Z_+$, $i=1,2,3$. Let $f_{k_i,j_i} \in L^2(\T\times\R) $ be nonnegative functions supported in $\widetilde{I}_{k_i} \times \widetilde{I}_{j_i}$.

\noi(a) Let $|k_{max}-k_{min}| \le 5$ and $j_1,j_2,j_3 \in \Z_+$.

\noi(a-1) If $j_{med} \le 3k_{max}$, then we have
\begin{eqnarray}\label{eq:bi-block estimate-a1.1}
   J(f_{k_1,j_1},f_{k_2,j_2},f_{k_3,j_3}) \lesssim 2^{(j_1+j_2+j_3)/2}2^{-(j_{med}+j_{max})/2}\prod_{i=1}^3 \|f_{k_i,j_i}\|_{L^2}.
\end{eqnarray}
\noi(a-2) Otherwise (i.e., if $j_{med} > 3k_{max}$), we have 
\begin{eqnarray}\label{eq:bi-block estimate-a1.2}
   J(f_{k_1,j_1},f_{k_2,j_2},f_{k_3,j_3}) \lesssim 2^{j_{min}/2}2^{j_{med}/4}2^{- \frac34 k_{max}}\prod_{i=1}^3 \|f_{k_i,j_i}\|_{L^2}.
\end{eqnarray}

\noi(b) Let $k_{min} \le k_{max}-10$.

\noi(b-1) If $(k_i,j_i) = (k_{min},j_{max})$ and $j_{med} \le 3k_{max}+k_{min}$, we have
\begin{eqnarray}\label{eq:bi-block estimate-b1.1}
 J(f_{k_1,j_1},f_{k_2,j_2},f_{k_3,j_3}) \lesssim 2^{(j_1+j_2+j_3)/2}2^{-(j_{med}+j_{max})/2}\prod_{i=1}^3 \|f_{k_i,j_i}\|_{L^2}.
\end{eqnarray}
\noi(b-2) If $(k_i,j_i) = (k_{min},j_{max})$ and $j_{med} > 3k_{max}+k_{min}$, we have
\begin{eqnarray}\label{eq:bi-block estimate-b1.2}
 J(f_{k_1,j_1},f_{k_2,j_2},f_{k_3,j_3}) \lesssim 2^{(j_1+j_2+j_3)/2}2^{-3k_{max}/2}2^{-k_{min}/2}2^{-j_{max}/2}\prod_{i=1}^3 \|f_{k_i,j_i}\|_{L^2}.
\end{eqnarray}
\noi(b-3) If $(k_i,j_i) \neq (k_{min},j_{max})$ and $j_{med} \le 4k_{max}$, we have
\begin{eqnarray}\label{eq:bi-block estimate-b1.3}
 J(f_{k_1,j_1},f_{k_2,j_2},f_{k_3,j_3}) \lesssim 2^{(j_1+j_2+j_3)/2}2^{-(j_{med}+j_{max})/2}\prod_{i=1}^3 \|f_{k_i,j_i}\|_{L^2}.
\end{eqnarray}
\noi(b-4) If $(k_i,j_i) \neq (k_{min},j_{max})$ and $j_{med} > 4k_{max}$, we have
\begin{eqnarray}\label{eq:bi-block estimate-b1.4}
 J(f_{k_1,j_1},f_{k_2,j_2},f_{k_3,j_3}) \lesssim 2^{(j_1+j_2+j_3)/2}2^{-2k_{max}}2^{-j_{max}/2}\prod_{i=1}^3 \|f_{k_i,j_i}\|_{L^2}.
\end{eqnarray}

\noi(c) For any $k_1,k_2,k_3,j_1,j_2,j_3 \in \Z_+$, then we
have
\begin{eqnarray}\label{eq:bi-block estimate-c1}
J(f_{k_1,j_1},f_{k_2,j_2},f_{k_3,j_3}) \lesssim 2^{j_{min}/2}2^{k_{min}/2}\prod_{i=1}^3 \|f_{k_i,j_i}\|_{L^2}.
\end{eqnarray}
\end{lemma}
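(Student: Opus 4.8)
The plan is to prove all the bounds by the standard ``two-against-one'' Cauchy--Schwarz scheme for the weighted convolution $J$, the only genuinely problem-dependent input being a lattice-point count governed by the resonance function $G$. Throughout I normalize the configuration using the symmetries $|J(f_1,f_2,f_3)| = |J(f_2,f_1,f_3)| = |J(f_3,f_2,f_1)| = |J(\overline{f}_1,f_2,f_3)|$, which let me pair $J$ against whichever of the three factors I please and freely relabel frequencies and modulations; I also use throughout the support relations $2^{k_{max}} \sim 2^{k_{sub}}$ and $2^{j_{max}} \sim \max(2^{j_{sub}}, |G|)$ recorded in \eqref{eq:support property}.

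The workhorse is an elementary bound. Pairing $J$ against one factor, say $f_{k_3,j_3}$, Cauchy--Schwarz gives $|J| \le \norm{f_{k_3,j_3}}_{L^2}\,\norm{f_{k_1,j_1}\ast f_{k_2,j_2}}_{L^2}$, and a second Cauchy--Schwarz inside the convolution confines the result by the measure of the resonant set. For fixed output frequency and modulation, this measure factors as (length of the free modulation window) $\times$ (number of admissible free frequencies). The modulation window always contributes $2^{j_{min}/2}$. Bounding the frequency count trivially by the number of integers in the smallest block, $\sim 2^{k_{min}}$, immediately yields part (c); this estimate is unconditional and serves as the fallback.

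All the remaining, sharper bounds come from replacing the trivial count $2^{k_{min}}$ by the sharp one dictated by $G$. Pairing against the factor carrying the \emph{largest} modulation and convolving the other two, the two convolved modulations occupy windows of size $\le 2^{j_{med}}$, so on the support the free frequency $n$ is constrained by $G(n) \in I$ with $|I| \sim 2^{j_{med}}$. Writing $G$ (after relabeling, and using that $\mu$ is odd) as $\mu(n_1)+\mu(n_2)+\mu(n_3)$ on $n_1+n_2+n_3=0$, I compute, with $n_a,n_b$ the two frequencies not held fixed,
\[
\partial_n G = \mu'(n_a)-\mu'(n_b) = (n_a-n_b)(n_a+n_b)\bigl(5(n_a^2+n_b^2)+3c_1\bigr), \qquad \partial_n^2 G = \mu''(n_a)+\mu''(n_b).
\]
Since $G$ is a fixed-degree polynomial with $O(1)$ critical points, I split the range of $n$ into boundedly many intervals of monotonicity and use $\#\set{n\in\Z : G(n)\in I} \lesssim 1 + |I|/\min|\partial_n G|$ on each, or, near a critical point where $\partial_n G$ degenerates, the quadratic count $\#\set{n : G(n)\in I} \lesssim 1 + (|I|/|\partial_n^2 G|)^{1/2}$. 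Evaluating the derivatives in each frequency regime produces exactly the thresholds in the statement: in the comparable-frequency case (a) one has $|\partial_n^2 G| \sim 2^{3k_{max}}$ near the diagonal $n_a \approx n_b$, giving the dichotomy $j_{med} \le 3k_{max}$ versus $j_{med} > 3k_{max}$ and, in the degenerate range, the fractional count responsible for $2^{j_{med}/4}2^{-3k_{max}/4}$ in \eqref{eq:bi-block estimate-a1.2}; in the high--low case (b), summing over a high frequency gives $|\partial_n G| \sim 2^{3k_{max}+k_{min}}$ when the top modulation sits on the low-frequency factor (threshold $3k_{max}+k_{min}$, estimates \eqref{eq:bi-block estimate-b1.1}--\eqref{eq:bi-block estimate-b1.2}) and $|\partial_n G| \sim 2^{4k_{max}}$ otherwise (threshold $4k_{max}$, estimates \eqref{eq:bi-block estimate-b1.3}--\eqref{eq:bi-block estimate-b1.4}). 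Substituting the count $1 + 2^{j_{med}}/|\partial_n G|$ (or its quadratic analogue) into $|J| \lesssim 2^{j_{min}/2}(\text{count})^{1/2}\prod\norm{f_{k_i,j_i}}_{L^2}$ reproduces every displayed inequality, the ``generic'' cases being those where the count is $O(1)$, so that the bound collapses to $2^{j_{min}/2}\prod\norm{f}_{L^2} = 2^{(j_1+j_2+j_3)/2}2^{-(j_{med}+j_{max})/2}\prod\norm{f}_{L^2}$.

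I expect the main obstacle to be the counting near the critical points of $G$. Unlike on $\R$, where the dispersive smoothing of the non-periodic evolution can be exploited, here the estimate rests entirely on controlling how many integers $n$ keep the quintic $G(n)$ inside a short interval when $\partial_n G$ is small; this forces the stationary-phase-type count through $\partial_n^2 G$ and is the source of the lossy $1/4$-powers in \eqref{eq:bi-block estimate-a1.2} and, more broadly, of the weakness of these periodic block estimates relative to Lemma 3.1 of \cite{GKK2013}. A secondary technical point is to check that the lower-order terms $c_1n^3 + c_2n$ in $\mu$ do not cancel the leading quintic contribution to $\partial_n G$ and $\partial_n^2 G$; this is immediate once $2^{k_{max}}$ is large, and the finitely many small-frequency configurations are absorbed into the implicit constants.
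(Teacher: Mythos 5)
Your proposal is correct and follows essentially the same route as the paper: a two-against-one Cauchy--Schwarz reduction to a lattice-point count for $\set{n : \mu(n_a)+\mu(n_b)\in \tau + O(2^{j_{med}})}$, with the count controlled by $\partial_n G$ (giving the thresholds $3k_{max}+k_{min}$ and $4k_{max}$ in case (b)) and by the non-degenerate critical point of $G$ near $n_a\approx n_b$ in case (a), which the paper extracts via the mean value theorem applied to $n_2^4-(n_1'-n_2)^4$ and which produces the same square-root count behind \eqref{eq:bi-block estimate-a1.2}. The trivial count $2^{k_{min}}$ for part (c) also matches the paper's argument.
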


\begin{proof}
The proof is very similar as the proof of Lemma 4.1 in \cite{Kwak2015} associated to the fifth-order modified KdV equation. For the sake of reader's convenience, we will give simple proof here. Let us assume that $j_1 \le j_2 \le j_3$ by the symmetry. In view of the proof of Lemma 4.1 in \cite{Kwak2015}, it suffices to consider
\[\sum_{\substack{n_3, \overline{\N}_{2,n_3}\\ \mu(n_1) + \mu(n_2) = \tau_3 + O(2^{j_2})}}f_{k_1,j_1}(n_1)f_{k_2,j_2}(n_2)f_{k_3,j_3}(n_1+n_2).\]
For (a), since $n_1+n_2+n_3 = 0$, we may assume that $|n_1 - n_2| \ll |n_1|$. Then by using the change of variable ($n_1' = n_1+n_2$), we have
\[\partial_{n_2} (\mu(n_2) + \mu(n_1' -n_2)) = 5n_2^4 - 5(n_1'-n_2) + 3c_1n_2^2 - 3c_1(n_1'-n_2)^2.\]
Thanks to the mean value theorem, since we have  
\[|n_2^4 - (n_1'-n_2)^4| \sim |n_1'|^3(n_2 - \frac{n_1'}{2})\]
and
\[|n_2^2 - (n_1'-n_2)^2| \sim |n_1'|(n_2 - \frac{n_1'}{2}),\]
that implies $n_2$ is contained in two intervals of length $O(2^{-3k_3/2}2^{j_3/2})$, i.e.
\[\mbox{the number of }n_2  \lesssim 2^{-3k_3/2}2^{j_2/2}.\]
Hence we obtain \eqref{eq:bi-block estimate-a1.1} and \eqref{eq:bi-block estimate-a1.2}.

For (b), we first consider $k_3 \neq k_{min}$ and assume that $k_1 \le k_2 \le k_3$ without loss of generality. Similarly as before, by using the change of variable ($n_2' = n_1 + n_2$), we have
\[\partial_{n_1} (\mu(n_1) + \mu(n_2' -n_1)) = 5n_1^4 - 5(n_2'-n_1) + 3c_1n_1^2 - 3c_1(n_2'-n_1)^2.\]
This implies $n_1$ is contained in an interval of length $O(2^{-4k_3}2^{j_2})$, i.e.
\[\mbox{the number of }n_1  \lesssim 2^{-4k_3}2^{j_2}.\]
If $k_3 = k_{min}$, we may assume $k_3 \le k_1 \le k_2$, and the same argument for $k_3 \neq k_{min}$ gives
\[\partial_{n_1} (\mu(n_1) + \mu(n_2' -n_1)) = 5n_1^4 - 5(n_2'-n_1) + 3c_1n_1^2 - 3c_1(n_2'-n_1)^2.\]
But, since $|n_2'| = |n_1+n_2| \sim 2^{k_3}$, $n_1$ is contained in two intervals of length $O(2^{-k_3}2^{-3k_2}2^{j_2})$, i.e.
\[\mbox{the number of }n_1  \lesssim 2^{-k_3}2^{-3k_2}2^{j_2},\]
which completes the proof of \eqref{eq:bi-block estimate-b1.1}, \eqref{eq:bi-block estimate-b1.2}, \eqref{eq:bi-block estimate-b1.3} and \eqref{eq:bi-block estimate-b1.4}.

For (c), we can easily obtain \eqref{eq:bi-block estimate-c1} by using the Cauchy-Schwarz inequality, and hence we complete the proof of Lemma \ref{lem:bi-L2}. 
\end{proof}

As an immediate consequence, we have the following corollary:

\begin{corollary}\label{cor:bi-L2}
Let $k_i, j_i\in \Z_+$, $i=1,2,3$. Let $f_{k_i,j_i} \in L^2(\T\times\R) $ be nonnegative functions supported in $\widetilde{I}_{k_i} \times \widetilde{I}_{j_i}$.

\noi(a) Let $|k_{max}-k_{min}| \le 5$ and $j_1,j_2,j_3 \in \Z_+$.

\noi(a-1) If $j_{med} \le 3k_{max}$, then we have
\begin{eqnarray}\label{eq:bi-block estimate-a2.1}
   \norm{\mathbf{1}_{D_{k_3,j_3}}(n,\tau) (f_{k_1,j_1}\ast f_{k_2,j_2})}_{L^2} \lesssim 2^{(j_1+j_2+j_3)/2}2^{-(j_{med}+j_{max})/2}\prod_{i=1}^2 \|f_{k_i,j_i}\|_{L^2}.
\end{eqnarray}
\noi(a-2) Otherwise (i.e., if $j_{med} > 3k_{max}$), we have 
\[   \norm{\mathbf{1}_{D_{k_3,j_3}}(n,\tau) (f_{k_1,j_1}\ast f_{k_2,j_2})}_{L^2} \lesssim 2^{j_{min}/2}2^{j_{med}/4}2^{- \frac34 k_{max}}\prod_{i=1}^2 \|f_{k_i,j_i}\|_{L^2}.\]

\noi(b) Let $k_{min} \le k_{max}-10$.

\noi(b-1) If $(k_i,j_i) = (k_{min},j_{max})$ and $j_{med} \le 3k_{max}+k_{min}$, we have
\begin{eqnarray}\label{eq:bi-block estimate-b2.1}
 \norm{\mathbf{1}_{D_{k_3,j_3}}(n,\tau) (f_{k_1,j_1}\ast f_{k_2,j_2})}_{L^2} \lesssim 2^{(j_1+j_2+j_3)/2}2^{-(j_{med}+j_{max})/2}\prod_{i=1}^2 \|f_{k_i,j_i}\|_{L^2}.
\end{eqnarray}
\noi(b-2) If $(k_i,j_i) = (k_{min},j_{max})$ and $j_{med} > 3k_{max}+k_{min}$, we have
\[ \norm{\mathbf{1}_{D_{k_3,j_3}}(n,\tau) (f_{k_1,j_1}\ast f_{k_2,j_2})}_{L^2} \lesssim 2^{(j_1+j_2+j_3)/2}2^{-3k_{max}/2}2^{-k_{min}/2}2^{-j_{max}/2}\prod_{i=1}^2 \|f_{k_i,j_i}\|_{L^2}.\]
\noi(b-3) If $(k_i,j_i) \neq (k_{min},j_{max})$ and $j_{med} \le 4k_{max}$, we have
\begin{eqnarray}\label{eq:bi-block estimate-b2.3}
 \norm{\mathbf{1}_{D_{k_3,j_3}}(n,\tau) (f_{k_1,j_1}\ast f_{k_2,j_2})}_{L^2} \lesssim 2^{(j_1+j_2+j_3)/2}2^{-(j_{med}+j_{max})/2}\prod_{i=1}^2 \|f_{k_i,j_i}\|_{L^2}.
\end{eqnarray}
\noi(b-4) If $(k_i,j_i) \neq (k_{min},j_{max})$ and $j_{med} > 4k_{max}$, we have
\begin{eqnarray}\label{eq:bi-block estimate-b2.4}
 \norm{\mathbf{1}_{D_{k_3,j_3}}(n,\tau) (f_{k_1,j_1}\ast f_{k_2,j_2})}_{L^2} \lesssim 2^{(j_1+j_2+j_3)/2}2^{-2k_{max}}2^{-j_{max}/2}\prod_{i=1}^2 \|f_{k_i,j_i}\|_{L^2}.
\end{eqnarray}

\noi(c) For any $k_1,k_2,k_3,j_1,j_2,j_3 \in \Z_+$, then we
have
\[\norm{\mathbf{1}_{D_{k_3,j_3}}(n,\tau) (f_{k_1,j_1}\ast f_{k_2,j_2})}_{L^2} \lesssim 2^{j_{min}/2}2^{k_{min}/2}\prod_{i=1}^2 \|f_{k_i,j_i}\|_{L^2}.\]
\end{corollary}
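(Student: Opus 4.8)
The plan is to read the corollary off Lemma \ref{lem:bi-L2} by a duality argument, identifying the restricted convolution with the trilinear form $J$. The point is that $J$ was set up precisely so that its three estimates transfer, case by case, to bounds on $\norm{\mathbf{1}_{D_{k_3,j_3}}(n,\tau)(f_{k_1,j_1}\ast f_{k_2,j_2})}_{L^2}$, which is exactly the content of ``an immediate consequence.''

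First I would dualize. By definition of the $L^2$ norm,
\[\normo{\mathbf{1}_{D_{k_3,j_3}}(n,\tau)(f_{k_1,j_1}\ast f_{k_2,j_2})}_{L^2}=\sup\left|\int_{\R}\sum_{n}(f_{k_1,j_1}\ast f_{k_2,j_2})(\tau,n)\,\overline{g(\tau,n)}\;d\tau\right|,\]
where the supremum runs over $g\in L^2$ with $\norm{g}_{L^2}\le 1$. Since the quantity contains the cutoff $\mathbf{1}_{D_{k_3,j_3}}$, I may transfer it onto the test function and thus assume $g=\mathbf{1}_{D_{k_3,j_3}}g$ is supported in $D_{k_3,j_3}$, i.e. $g$ is frequency-localized in $I_{k_3}$ and modulation-localized in $I_{j_3}$, still with $\norm{g}_{L^2}\le 1$.

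Next I would expand the convolution in the $(\tau,n)$ variables and relabel $n_1+n_2=n$, $\tau_1+\tau_2=\tau$, which turns the pairing into a sum over frequencies with $n_1+n_2+n_3=0$ together with the modulation constraint $\zeta_1+\zeta_2+\zeta_3+G(n_1,n_2)=0$, where $\zeta_i=\tau_i-\mu(n_i)$. Because $\mu$ is odd, the reflection $\overline{g}(\tau,n)=g(-\tau,-n)$ preserves both the frequency block $I_{k_3}$ and the modulation block $I_{j_3}$, so $f_{k_3,j_3}:=\overline{g}$ is supported in $\widetilde{I}_{k_3}\times\widetilde{I}_{j_3}$ with $\norm{f_{k_3,j_3}}_{L^2}\le 1$. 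Using the symmetries $|J(f_1,f_2,f_3)|=|J(f_3,f_2,f_1)|=|J(\overline{f}_1,f_2,f_3)|$ recorded right after the definition of $J$, the pairing becomes, up to reflection,
\[\int_{\R}\sum_{n}(f_{k_1,j_1}\ast f_{k_2,j_2})(\tau,n)\,\overline{g(\tau,n)}\;d\tau=J(f_{k_1,j_1},f_{k_2,j_2},f_{k_3,j_3}).\]

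Finally I would invoke Lemma \ref{lem:bi-L2} in each of the cases (a-1) through (c). Every bound there carries the factor $\prod_{i=1}^3\norm{f_{k_i,j_i}}_{L^2}$; since $\norm{f_{k_3,j_3}}_{L^2}\le 1$, taking the supremum over $g$ drops the third factor and leaves $\prod_{i=1}^2\norm{f_{k_i,j_i}}_{L^2}$ times the same modulation/frequency weight, which is precisely the matching estimate of the corollary. The only step needing care is the bookkeeping in the change of variables: one must check that the shift by $G(n_1,n_2)$ in the third slot of $J$ is exactly what reconciles the convolution (taken in $\tau$) with the modulation variables $\zeta_i=\tau_i-\mu(n_i)$ defining $J$, so that the support hypothesis $\supp g\subset D_{k_3,j_3}$ translates faithfully into $\supp f_{k_3,j_3}\subset \widetilde{I}_{k_3}\times\widetilde{I}_{j_3}$. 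Once this identification is in place, the corollary follows line by line from the lemma.
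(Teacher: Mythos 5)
Your proposal is correct and is exactly the argument the paper intends: the corollary is stated as an immediate consequence of Lemma \ref{lem:bi-L2}, and the duality/reflection identification of $\norm{\mathbf{1}_{D_{k_3,j_3}}(f_{k_1,j_1}\ast f_{k_2,j_2})}_{L^2}$ with $\sup J(f_{k_1,j_1},f_{k_2,j_2},f_{k_3,j_3})$ over unit-norm $f_{k_3,j_3}$ supported in $\widetilde{I}_{k_3}\times\widetilde{I}_{j_3}$ is the standard (and here clearly intended) route. Your bookkeeping of the shift by $G(n_1,n_2)$ and of the reflection preserving $D_{k_3,j_3}$ (since $\mu$ is odd and the blocks $I_j$, $I_k$ are symmetric) is accurate, so the case-by-case transfer of the bounds goes through as you describe.
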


\section{Nonlinear estimates}\label{sec:nonlinear}
In this section, we prove the quadratic and cubic nonlinear estimates for the fifth-order KdV equation. In the following section, we assume that $|10\wh{u}_0(0)| \le 1$ in order to use
\[|G(n_1,n_2)| \gtrsim |n_1n_2(n_1+n_2)|(n_1^2 + n_2^2 + (n_1+n_2)^2)\]
in the support property \eqref{eq:support property}.

\begin{remark}
The assumption $|10\wh{u}_0(0)| \le 1$ is quite natural for the analysis in this problem, because this problem is scaling sub-critical. Indeed, by the Cauchy-Schwarz inequality, we have
\[|\wh{u}_0(0)| \lesssim \norm{u_0}_{L^2} \le \norm{u_0}_{H^s},\]
for $s \ge 0$. Hence, the smallness of the initial data always guarantees the smallness of mean.  
\end{remark}

\begin{lemma}[Resonance estimate]\label{lem:resonant2}
Let $k \ge 0$. Then, we have
\begin{equation}\label{eq:resonant2-1}
\norm{P_kN_{1}(u,v,w)}_{N_{k}} \lesssim 2^{-k}\norm{P_ku}_{F_{k}}\norm{P_kv}_{F_{k}}\norm{P_kw}_{F_{k}}.
\end{equation} 
\end{lemma}

\begin{proof}
From the definitions of $N_1(u,v,w)$ and $N_k$ norm, the left-hand side of \eqref{eq:resonant2-1} is bounded by
\begin{equation}\label{eq:resonant2-2}
\begin{aligned}
\sup_{t_k \in \R} &\Big\|(\tau - \mu(n)+ i2^{2k})^{-1}2^{k}\mathbf{1}_{I_k}(n)\ft\left[\eta_0\left(2^{2k-2}(t-t_k)\right)P_ku\right] \\
&\hspace{3em}\ast \ft\left[\eta_0\left(2^{2k-2}(t-t_k)\right)P_kv\right] \ast \ft\left[\eta_0\left(2^{2k-2}(t-t_k)\right)P_kw\right]\Big\|_{X_k}
\end{aligned}
\end{equation}
Set $u_k = \ft\left[\eta_0\left(2^{2k-2}(t-t_k)\right)P_ku\right], v_k = \ft\left[\eta_0\left(2^{2k-2}(t-t_k)P_kv\right)\right]$ and $w_k = \ft\left[\eta_0\left(2^{2k-2}(t-t_k)\right)P_kw\right]$. We decompose each of $u_k,v_k$ and $w_k$ into modulation dyadic pieces as $u_{k,j_1}(\tau,n) = u_k(\tau,n)\eta_{j_1}(\tau - \mu(n))$, $v_{k,j_2}(\tau,n) = v_k(\tau,n)\eta_{j_2}(\tau - \mu(n))$ and $w_{k,j_3}(\tau,n) = w_k(\tau,n)\eta_{j_3}(\tau - \mu(n))$, respectively, with usual modification like $f_{\le j}(\tau) = f(\tau)\eta_{\le j}(\tau-\mu(n))$. Then, from the Cauchy-Schwarz inequality, \eqref{eq:resonant2-2} is bounded by
\begin{equation}\label{eq:resonant2-3}
2^{k}\sum_{j_4 \ge 0} \frac{2^{j_4/2}}{\max(2^{j_4},2^{2k})} \sum_{j_1,j_2,j_3 \ge 2k} 2^{(j_{min}+j_{thd})/2}\norm{u_{k,j_1}}_{L_{\tau}^2\ell_n^2}\norm{v_{k,j_2}}_{L_{\tau}^2\ell_n^2}\norm{w_{k,j_3}}_{L_{\tau}^2\ell_n^2}.
\end{equation}
Since $j_1,j_2,j_3 \ge 2k$, if $j_4 \le 2k$, we have $(\max(2^{j_4},2^{2k}))^{-1}2^{(j_{min}+j_{thd})/2} \lesssim 2^{(j_1+j_2+j_3)/2}2^{-3k}$, otherwise, $(\max(2^{j_4},2^{2k}))^{-1}2^{(j_{min}+j_{thd})/2} \lesssim 2^{-j_4}2^{(j_1+j_2+j_3)/2}2^{-k}$, and hence by performing all summations over $j_1, j_2, j_3$ and $j_4$, we have
\begin{align*}
\eqref{eq:resonant2-3} &\lesssim 2^{-k}\sum_{j_1,j_2,j_3\ge 2k} 2^{(j_1+j_2+j_3)/2}\norm{u_{k,j_1}}_{L_{\tau}^2\ell_n^2}\norm{v_{k,j_2}}_{L_{\tau}^2\ell_n^2}\norm{w_{k,j_3}}_{L_{\tau}^2\ell_n^2}\\
&\lesssim 2^{-k} \norm{u_k}_{X_k}\norm{v_k}_{X_k}\norm{w_k}_{X_k},
\end{align*}
which implies \eqref{eq:resonant2-1}. 
\end{proof}

Next, we consider the main nonlinear estimates in the fifth-order KdV equation. The first lemma below is to estimate the \emph{high-low} interaction component. As mentioned in Sections \ref{sec:intro} and \ref{sec:bilinear}, the estimation of the \emph{high-low} interaction component fails in the standard $X^{s,b}$ space because of due to the much more derivatives in high frequency mode and the lack of dispersive smoothing effect. Hence the following lemma shows the choice of short time length ($\approx \mbox{(frequency)}^{-2}$) is well adapted to estimate bilinear terms in the fifth-order KdV equation. 

\begin{lemma}[High-low $\Rightarrow$ high]\label{lem:bi-nonres1}
Let $k_3 \ge 20$, $|k_2-k_3| \le 5$ and $0\le k_1 \le k_3 - 10$. Then, we have 
\begin{equation}\label{eq:bi-nonres1-1}
\begin{aligned}
\norm{P_{k_3}N_3(P_{k_1}u,P_{k_2}v)}_{N_{k_3}} &+ \norm{P_{k_3}N_4(P_{k_1}u,P_{k_2}v)}_{N_{k_3}}\lesssim 2^{-k_1/2}\norm{P_{k_1}u}_{F_{k_1}}\norm{P_{k_2}v}_{F_{k_2}}.
\end{aligned}
\end{equation} 
\end{lemma}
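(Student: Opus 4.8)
The plan is to estimate the two quadratic non-resonant pieces $P_{k_3}N_3(P_{k_1}u,P_{k_2}v)$ and $P_{k_3}N_4(P_{k_1}u,P_{k_2}v)$ by reducing them, via the definition of the $N_{k_3}$ norm, to a bound on the $L^2$-block quantity $J(f_1,f_2,f_3)$ controlled by Lemma~\ref{lem:bi-L2}. First I would fix an extension and a time cut-off $\eta_0(2^{2k_3}(t-t_{k_3}))$ as in the definition of $N_{k_3}$, and use property~\eqref{eq:prop2} of Lemma~\ref{lem:prop of Xk} to replace the cut-offs at scale $2^{2k_3}$ acting on the lower-frequency factor $P_{k_1}u$ by cut-offs at its own scale $2^{2k_1}$ (this is the standard localization step that makes the short-time norms compatible across frequencies; one must check the $k$-acceptable multiplier bound, but this is routine). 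After Fourier transforming, both $N_3$ and $N_4$ become the convolution $f_{k_1}\ast g_{k_2}$ tested against a dual function $h_{k_3}$, with symbol $n$ in front ($\sim 2^{k_3}$) and, for $N_4$, an additional $n_2^2 \sim 2^{2k_3}$ weight coming from $\px^2 v$. So the total derivative loss to absorb is $2^{k_3}$ for $N_3$ and $2^{3k_3}$ for $N_4$.

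Next I would decompose each factor into modulation pieces $u_{k_1,j_1}$, $v_{k_2,j_2}$, $h_{k_3,j_3}$ and apply the duality reformulation so that the trilinear form is exactly $J$. This is the \textbf{high $\times$ low $\Rightarrow$ high} regime with $k_{min}=k_1$ and $k_{max}\sim k_2\sim k_3$, so $|k_{max}-k_{min}|\ge 10$ and part~(b) of Lemma~\ref{lem:bi-L2} applies. The crucial input is the support property~\eqref{eq:support property}: because the interaction is non-resonant, $2^{j_{max}}\gtrsim |G(n_1,n_2)|\gtrsim |n_1 n_2 (n_1+n_2)|(n_1^2+n_2^2+(n_1+n_2)^2)\sim 2^{k_1}2^{4k_3}$. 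This large modulation is precisely what pays for the derivatives: the factor $2^{j_{max}}$ supplies $2^{k_1}2^{4k_3}$, and the short-time restriction forces $j_1,j_2,j_3\ge 2k_3$, so that after summing the geometric series in the modulations (using the $\max(2^{j_4},2^{2k_3})^{-1}$ weight from the $N_{k_3}$ norm exactly as in the proof of Lemma~\ref{lem:resonant2}) the derivative gain from $2^{-j_{max}/2}$ or $2^{-2k_{max}}$ in \eqref{eq:bi-block estimate-b1.2} and \eqref{eq:bi-block estimate-b1.4} beats the $2^{3k_3}$ loss from $N_4$.

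The bookkeeping I expect to carry out is: collect the prefactor $2^{k_3}$ (symbol) times $2^{2k_3}$ (the $\px^2$ for $N_4$), the loss $2^{k_3+2sk_3}\cdot 2^{-sk_1}\cdots$ (here there is no $s$-dependence since the claim is at fixed frequencies), against the gain $2^{-j_{max}/2}\sim 2^{-k_1/2}2^{-2k_3}$ from the resonance function together with the block estimate, and verify that the net power of $2^{k_3}$ cancels while a surplus $2^{-k_1/2}$ survives to match the right-hand side of \eqref{eq:bi-nonres1-1}. One must treat separately the two subcases of part~(b): whether the extremal modulation sits on the low-frequency factor, $(k_i,j_i)=(k_{min},j_{max})$, giving \eqref{eq:bi-block estimate-b1.1}--\eqref{eq:bi-block estimate-b1.2}, or on a high-frequency factor, giving \eqref{eq:bi-block estimate-b1.3}--\eqref{eq:bi-block estimate-b1.4}; and within each, the split $j_{med}\lessgtr 3k_{max}+k_{min}$ (resp. $4k_{max}$).

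\textbf{The main obstacle} is the $N_4$ term, where one must control a full three derivatives $2^{3k_3}$ landing in the high-frequency output. The whole point of the short-time structure is that, unlike in the standard $X^{s,b}$ space (where Theorem~\ref{thm:bilinear} shows the estimate fails), here the modulation is forced up to $2^{j}\gtrsim 2^{2k_3}$ by the time-localization, and the non-resonant gain $|G|\gtrsim 2^{k_1}2^{4k_3}$ is large; the delicate point is checking that the weakest case of the block estimate—case \eqref{eq:bi-block estimate-b1.4} with the largest modulation on a high factor—still yields enough, since there the gain is only $2^{-2k_{max}}2^{-j_{max}/2}$ rather than the full $2^{-(j_{med}+j_{max})/2}$. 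Verifying that $2^{3k_3}\cdot 2^{-2k_3}\cdot 2^{-j_{max}/2}$ summed against the $N_{k_3}$ weight closes with the advertised $2^{-k_1/2}$ margin is where the short-time length $2^{-2k_3}$ and the non-resonance must be used in tandem, and is the step I would write out most carefully.
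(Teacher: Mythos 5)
Your proposal follows essentially the same route as the paper: reduce to the $L^2$-block estimates of Lemma \ref{lem:bi-L2}/Corollary \ref{cor:bi-L2} through the definition of $N_{k_3}$, use \eqref{eq:prop2} to reconcile the time scales, decompose in modulation, and exploit $2^{j_{max}}\gtrsim|G|\gtrsim 2^{k_1}2^{4k_3}$ together with the $\max(2^{j_3},2^{2k_3})^{-1}$ weight to absorb the $2^{3k_3}$ symbol with a $2^{-k_1/2}$ surplus. The only quibbles are that the symbol of $N_3$ is $nn_1n_2\sim 2^{k_1}2^{2k_3}$ rather than $2^{k_3}$ (harmless, since it is dominated by the $2^{3k_3}$ count for $N_4$ that you do treat), and that the borderline subcase is in fact (b-1), with $j_1=j_{max}$ and $j_{med}\le 3k_3+k_1$, rather than (b-4), as the paper's closing remark in this proof points out.
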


\begin{proof}
We follow the similar argument as in the section 5 in \cite{Kwak2015}. By the definitions of $N_{k}$ and $X_{k}$, the left-hand side of \eqref{eq:bi-nonres1-1} is dominated by
\begin{equation}\label{eq:bi-nonres1-2}
\begin{aligned}
\sup_{t_k \in \R} &\Big\|(\tau_3 - \mu_2(n_3)+ i2^{2k_3})^{-1}2^{3k_3}\mathbf{1}_{I_{k_3}}(n_3) \\
&\cdot\ft\left[\eta_0\left(2^{2k_3-2}(t-t_k)\right)P_{k_1}u\right] \ast \ft\left[\eta_0\left(2^{2k_3-2}(t-t_k)P_{k_2}v\right)\right]\Big\|_{X_{k_3}}.
\end{aligned}
\end{equation}
Set $f_{k_1} = \ft\left[\eta_0\left(2^{2k_3-2}(t-t_k)\right)P_{k_1}u\right]$ and $f_{k_2} = \ft\left[\eta_0\left(2^{2k_3-2}(t-t_k)\right)P_{k_2}v\right]$. We further decompose $f_{k_i}$ into modulation dyadic pieces as $f_{k_i,j_i}(\tau,n) = f_{k_i}(\tau,n)\eta_{j_i}(\tau -\mu_2(n))$, $j=1,2$, with usual modification $f_{k,\le j}(\tau,n) = f_{k}(\tau,n)\eta_{\le j}(\tau-\mu_2(n))$. Then \eqref{eq:bi-nonres1-2} is bounded by
\begin{equation}\label{eq:bi-nonres1-3}
2^{3k_3}\sum_{j_3\ge0}\frac{2^{j_3/2}}{\max(2^{j_4}, 2^{2k_3})}\sum_{j_1,j_2 \ge 2k_3}\norm{\mathbf{1}_{D_{k_3,j_3}}(f_{k_1,j_1} \ast f_{k_2,j_2})}_{L_{\tau_3}^2\ell_{n_2}^2}
\end{equation}
If $j_3 \le 2k_3$, we use \eqref{eq:bi-block estimate-b2.1} -- \eqref{eq:bi-block estimate-b2.4}, separately, to estimate $\norm{\mathbf{1}_{D_{k_3,j_3}}(f_{k_1,j_1} \ast f_{k_2,j_2})}_{L_{\tau_3}^2\ell_{n_2}^2}$, then we have
\[2^{3k_3}\sum_{j_3\le 2k_3}2^{j_3/2}2^{-2k_3}\sum_{\substack{j_1,j_2 \ge 2k_3\\j_1 = j_{max}\\j_{med} \le 3k_3+k_1}}2^{j_{min}/2}\norm{f_{k_1,j_1}}_{L_{\tau}^2\ell_{n}^2}\norm{f_{k_2,j_2}}_{L_{\tau}^2\ell_{n}^2},\]
\[2^{3k_3}\sum_{j_3\le 2k_3}2^{j_3/2}2^{-2k_3}\sum_{\substack{j_1,j_2 \ge 2k_3\\j_1 = j_{max}\\j_{med} > 3k_3+k_1}}2^{(j_1+j_2+j_3)/2}2^{-3k_3/2}2^{-k_1/2}2^{-j_{max}/2}\norm{f_{k_1,j_1}}_{L_{\tau}^2\ell_{n}^2}\norm{f_{k_2,j_2}}_{L_{\tau}^2\ell_{n}^2},\]
\[2^{3k_3}\sum_{j_3\le 2k_3}2^{j_3/2}2^{-2k_3}\sum_{\substack{j_1,j_2 \ge 2k_3\\j_1 \neq j_{max}\\j_{med} \le 4k_3}}2^{j_{min}/2}\norm{f_{k_1,j_1}}_{L_{\tau}^2\ell_{n}^2}\norm{f_{k_2,j_2}}_{L_{\tau}^2\ell_{n}^2},\]
or
\[2^{3k_3}\sum_{j_3\le 2k_3}2^{j_3/2}2^{-2k_3}\sum_{\substack{j_1,j_2 \ge 2k_3\\j_1 \neq j_{max}\\j_{med} > 4k_3}}2^{(j_1+j_2+j_3)/2}2^{-2k_3}2^{-j_{max}/2}\norm{f_{k_1,j_1}}_{L_{\tau}^2\ell_{n}^2}\norm{f_{k_2,j_2}}_{L_{\tau}^2\ell_{n}^2}.\]
By performing the summation over $j_1,j_2$ and $j_3$ for each case with $j_{max} \ge 4k_3+k_1$, we have
\[\begin{aligned}\eqref{eq:bi-nonres1-3} &\lesssim 2^{-k_1/2}\sum_{j_1,j_2}2^{(j_1+j_2)/2}\norm{f_{k_1,j_1}}_{L_{\tau}^2\ell_{n}^2}\norm{f_{k_2,j_2}}_{L_{\tau}^2\ell_{n}^2}\\
&\lesssim 2^{-k_1/2} \norm{P_{k_1}u}_{F_{k_1}}\norm{P_{k_2}v}_{F_{k_2}}.\end{aligned}\]
If $j_3 > 2k_3$, similarly as before, we also have
\[\begin{aligned}\eqref{eq:bi-nonres1-3} &\lesssim 2^{3k_3}\sum_{j_3 > 2k_3}2^{j_3/2}2^{-j_3}\sum_{\substack{j_1,j_2 \ge 2k_3\\j_1 = j_{max}\\j_{med} \le 3k_3+k_1}}2^{j_{min}/2}\norm{f_{k_1,j_1}}_{L_{\tau}^2\ell_{n}^2}\norm{f_{k_2,j_2}}_{L_{\tau}^2\ell_{n}^2} \\
&\lesssim 2^{-k_1/2}\sum_{j_1,j_2}2^{(j_1+j_2)/2}\norm{f_{k_1,j_1}}_{L_{\tau}^2\ell_{n}^2}\norm{f_{k_2,j_2}}_{L_{\tau}^2\ell_{n}^2}\\
&\lesssim 2^{-k_1/2} \norm{P_{k_1}u}_{F_{k_1}}\norm{P_{k_2}v}_{F_{k_2}}.\end{aligned}\]
Remark that one can know that the case when $j_1 = j_{max}$ and $j_{med} \le 3k_3+k_1$ gives the worst bound. Thus, we complete the proof of Lemma \ref{lem:bi-nonres1}.
\end{proof}

\begin{lemma}[High-high $\Rightarrow$ high]\label{lem:bi-nonres2}
Let $k_3 \ge 20$ and $|k_1-k_3|, |k_2-k_3| \le 5$. Then, we have 
\begin{equation}\label{eq:bi-nonres2-1}
\begin{aligned}
\norm{P_{k_3}N_3(P_{k_1}u,P_{k_2}v)}_{N_{k_3}} &+ \norm{P_{k_3}N_{4}(P_{k_1}u,P_{k_2})}_{N_{k_3}}\lesssim 2^{-k_2/2} \norm{P_{k_1}u}_{F_{k_1}}\norm{P_{k_2}v}_{F_{k_2}}
\end{aligned}
\end{equation} 
\end{lemma}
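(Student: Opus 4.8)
The target is the high-high $\Rightarrow$ high bilinear estimate \eqref{eq:bi-nonres2-1}, and the natural plan is to mirror the structure of the proof of Lemma \ref{lem:bi-nonres1}, using the same duality/Littlewood-Paley reduction but invoking the part (a) estimates of Corollary \ref{cor:bi-L2} rather than part (b). First I would unfold the definitions of the $N_{k_3}$ and $X_{k_3}$ norms exactly as in \eqref{eq:bi-nonres1-2}: the left-hand side is dominated by a supremum over $t_k$ of the $X_{k_3}$-norm of $(\tau_3-\mu(n_3)+i2^{2k_3})^{-1}2^{3k_3}\mathbf{1}_{I_{k_3}}(n_3)$ applied to the convolution of the two localized, time-truncated factors $f_{k_1}=\ft[\eta_0(2^{2k_3-2}(t-t_k))P_{k_1}u]$ and $f_{k_2}=\ft[\eta_0(2^{2k_3-2}(t-t_k))P_{k_2}v]$. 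Here the key structural point is that because all three frequencies are comparable ($|k_1-k_3|,|k_2-k_3|\le 5$), we are squarely in case (a) of Corollary \ref{cor:bi-L2}, with $k_{max}\sim k_{min}\sim k_3$. The $2^{3k_3}$ factor encodes the three derivatives from $\px^3$ in the high-frequency slot.

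Next I would decompose each $f_{k_i}$ into modulation pieces $f_{k_i,j_i}$ ($j_i\ge 2k_3$, since the time-localization at scale $2^{-2k_3}$ forces modulations $\gtrsim 2^{2k_3}$), split the outer sum over $j_3$ into the regimes $j_3\le 2k_3$ and $j_3>2k_3$, and in each case bound the $L^2$ norm of $\mathbf{1}_{D_{k_3,j_3}}(f_{k_1,j_1}\ast f_{k_2,j_2})$ using \eqref{eq:bi-block estimate-a2.1} when $j_{med}\le 3k_{max}$ and the (a-2) estimate when $j_{med}>3k_{max}$. Substituting these into \eqref{eq:bi-nonres1-3} with the weight $2^{j_3/2}/\max(2^{j_3},2^{2k_3})$ gives an expression I would then sum over $j_1,j_2,j_3$. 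The arithmetic that makes the estimate close is the budget of powers of $2^{k_3}$: against the $2^{3k_3}$ prefactor, the resonance function in case (a) supplies $|G|\gtrsim 2^{5k_3}$ (from $|n_1n_2(n_1+n_2)|(n_1^2+n_2^2+(n_1+n_2)^2)\gtrsim 2^{5k_3}$, using the support property \eqref{eq:support property} that $2^{j_{max}}\sim\max(2^{j_{sub}},|G|)$), so one of the modulations is forced to be $\gtrsim 2^{5k_3}$, and the half-power of that large modulation together with the $L^2$-block gain outcompetes $2^{3k_3}$ and leaves the claimed $2^{-k_2/2}\sim 2^{-k_3/2}$ decay.

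The main obstacle I anticipate is the bookkeeping in the sub-case where the large modulation $j_{max}\sim 5k_3$ sits on the \emph{output} frequency $n_3$ rather than on an input, i.e. $j_3=j_{max}$; in that regime the favorable factor $\max(2^{j_3},2^{2k_3})^{-1}\sim 2^{-j_3}\sim 2^{-5k_3}$ in the multiplier is what carries the day, so I must be careful that the worst configuration is identified correctly and that the summation over the two input modulations $j_1,j_2$ (now both potentially small, $\sim 2k_3$) still converges with the $2^{(j_1+j_2)/2}$ weights supplied by the $X_k$-norm. Concretely I expect the dominant configuration to be $j_{max}\sim 5k_3$ on an input with $j_{med}\le 3k_{max}$, exactly parallel to the remark closing Lemma \ref{lem:bi-nonres1}. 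Once all four modulation placements are checked to yield at worst $2^{-k_3/2}$, performing the geometric summations over $j_1,j_2,j_3$ and recognizing $\sum_{j_i}2^{j_i/2}\|f_{k_i,j_i}\|_{L_\tau^2\ell_n^2}\sim\|f_{k_i}\|_{X_{k_i}}\lesssim\|P_{k_i}u\|_{F_{k_i}}$ delivers \eqref{eq:bi-nonres2-1}, which completes the proof.
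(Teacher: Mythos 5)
Your proposal is correct and follows essentially the same route as the paper: reduce to the dyadic expression \eqref{eq:bi-nonres2-2}, invoke part (a) of Corollary \ref{cor:bi-L2}, and use that the resonance function forces $j_{max}\ge 5k_3$ so that $2^{3k_3}2^{-j_{max}/2}2^{-j_{med}/2}\lesssim 2^{-k_3/2}$. The paper merely shortens the case analysis by noting that $j_3\ge 2k_3$ with $j_{med}\le 3k_3$ is the worst configuration, which is exactly the dominant case you identify.
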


\begin{proof}
In view of the proof of Lemma \ref{lem:bi-nonres1}, \eqref{eq:bi-nonres2-1} is dominated by
\begin{equation}\label{eq:bi-nonres2-2}
2^{3k_3}\sum_{j_3\ge0}\frac{2^{j_3/2}}{\max(2^{j_4}, 2^{2k_3})}\sum_{j_1,j_2 \ge 2k_3}\norm{\mathbf{1}_{D_{k_3,j_3}}(f_{k_1,j_1} \ast f_{k_2,j_2})}_{L_{\tau_3}^2\ell_{n_2}^2}.
\end{equation}
Similarly as above, it is enough to consider the case when $j_3 \ge 2k_3$ and $j_{med} \le 3k_3$. By using \eqref{eq:bi-block estimate-a2.1} to estimate $\norm{\mathbf{1}_{D_{k_3,j_3}}(f_{k_1,j_1} \ast f_{k_2,j_2})}_{L_{\tau_3}^2\ell_{n_2}^2}$, then we have
\begin{align*}
\eqref{eq:bi-nonres2-2} &\lesssim 2^{3k_3}\sum_{j_3\ge 2k_3}2^{-j_3/2}\sum_{j_1,j_2 \ge 2k_3}2^{j_{min}/2}\norm{f_{k_1,j_1}}_{L_{\tau}^2\ell_n^2}\norm{f_{k_2,j_2}}_{L_{\tau}^2\ell_n^2}\\
&\lesssim 2^{3k_3}2^{-5k_3/2}2^{-k_3}\sum_{j_1,j_2 \ge 2k_3}2^{(j_1+j_2)/2}\norm{f_{k_1,j_1}}_{L_{\tau}^2\ell_n^2}\norm{f_{k_2,j_2}}_{L_{\tau}^2\ell_n^2}\\
&\lesssim 2^{-k_2/2} \norm{P_{k_1}u}_{F_{k_1}}\norm{P_{k_2}v}_{F_{k_2}},
\end{align*}
since $j_{max} \ge 5k_3$. Hence, we complete the proof of Lemma \ref{lem:bi-nonres2}.
\end{proof}

\begin{lemma}[High-high $\Rightarrow$ low]\label{lem:bi-nonres3}
Let $k_2 \ge 20$, $|k_1-k_2| \le 5$ and $0\le k_3 \le k_2 -10$. Then, we have 
\begin{equation}\label{eq:bi-nonres3-1}
\begin{aligned}
\norm{P_{k_3}N_3(P_{k_1}u,P_{k_2}v)}_{N_{k_3}} &+ \norm{P_{k_3}N_{4}(P_{k_1}u,P_{k_2})}_{N_{k_3}}\lesssim k_22^{k_2}2^{-3k_3/2}\norm{P_{k_1}u}_{F_{k_1}}\norm{P_{k_2}v}_{F_{k_2}}
\end{aligned}
\end{equation} 
\end{lemma}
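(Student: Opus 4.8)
The plan is to follow the same scheme as in the proofs of Lemmas \ref{lem:bi-nonres1} and \ref{lem:bi-nonres2}, reducing the $N_{k_3}$-norm to a dyadic sum of $L^2$-block quantities and invoking the appropriate parts of Corollary \ref{cor:bi-L2}. First I would unfold the definitions of the $N_{k_3}$ and $X_{k_3}$ norms to dominate the left-hand side of \eqref{eq:bi-nonres3-1} by the localized expression
\[
2^{3k_3}\sum_{j_3\ge0}\frac{2^{j_3/2}}{\max(2^{j_4},2^{2k_3})}\sum_{j_1,j_2\ge 2k_2}\norm{\mathbf{1}_{D_{k_3,j_3}}(f_{k_1,j_1}\ast f_{k_2,j_2})}_{L_{\tau_3}^2\ell_{n_3}^2},
\]
where the $2^{3k_3}$ factor comes from the three derivatives $\px^3$ acting on the output frequency $2^{k_3}$, and the localizing bump is at time scale $2^{-2k_3}$ (note that because we are in the high-high $\Rightarrow$ low regime, the relevant frequency for the localization factor is the \emph{low} output frequency $2^{k_3}$, so $j_1,j_2\ge 2k_2$ but the modulation threshold for $j_3$ is $2k_3$). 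The crucial structural input is the support property \eqref{eq:support property}: since $k_{min}=k_3\le k_2-10=k_{max}-10$ and the two high frequencies are comparable, we have $2^{j_{max}}\sim\max(2^{j_{med}},|G|)$ with $|G(n_1,n_2)|\gtrsim 2^{3k_2}2^{k_3}$ here (because $|n_1n_2(n_1+n_2)|\sim 2^{2k_2}2^{k_3}$ and the quadratic factor contributes $2^{2k_2}$, though one must check the exact power carefully).

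Next I would split into the subcases dictated by Corollary \ref{cor:bi-L2}(b), according to whether the largest modulation sits on the low-frequency leg $(k_3,j_{max})$ or on a high-frequency leg, and whether $j_{med}$ exceeds the relevant threshold. In each subcase I apply the corresponding block estimate \eqref{eq:bi-block estimate-b2.1}--\eqref{eq:bi-block estimate-b2.4} to the $L^2$-norm of the localized convolution, then perform the summations over $j_1,j_2,j_3$ (and the auxiliary $j_4$) exactly as in Lemma \ref{lem:bi-nonres1}: the $2^{j_3/2}/\max(2^{j_4},2^{2k_3})$ weight is summable after pairing it against the $2^{-j_{max}/2}$ gain from the block estimates, and the remaining $2^{(j_1+j_2)/2}$ reassembles into $\norm{P_{k_1}u}_{F_{k_1}}\norm{P_{k_2}v}_{F_{k_2}}$. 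Collecting the powers of $2^{k}$, the worst subcase should produce exactly $2^{3k_3}\cdot 2^{-3k_2/2}2^{-k_3/2}2^{-3k_3}\cdot(\text{high-frequency factors})$, which after balancing yields the claimed $2^{k_2}2^{-3k_3/2}$ bound. The logarithmic factor $k_2$ arises from summing a (nearly) borderline geometric-in-$j$ series over the $O(k_2)$ admissible dyadic modulation blocks in the regime where the gain is exactly critical, as in the analogous high-high estimates in \cite{GKK2013,Kwak2015}.

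The main obstacle I expect is bookkeeping the exact power of $2^{k}$ in the subcase $(k_i,j_i)=(k_{min},j_{max})$ with $j_{med}\le 3k_{max}+k_{min}$, since this is the configuration where the block estimate \eqref{eq:bi-block estimate-b2.1} gives the weakest modulation gain and where the three output derivatives $2^{3k_3}$ are least helped by the resonance function $|G|$. Here one cannot afford to discard $|G|\sim 2^{3k_2}2^{k_3}$: it must be used to convert the $\max(2^{j_4},2^{2k_3})^{-1}$ weight and the $2^{-j_{max}/2}$ block gain into the negative powers of $2^{k_2}$ needed to absorb the $2^{3k_3}$ prefactor down to the stated $2^{k_2}2^{-3k_3/2}$. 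Tracking where the single surviving positive power $2^{k_2}$ and the logarithm enter, and confirming that no subcase beats this bound, is the delicate part; the remaining subcases should all be strictly better and hence routine. Once all subcases are bounded by the right-hand side of \eqref{eq:bi-nonres3-1}, the proof is complete.
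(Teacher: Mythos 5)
Your proposal misses the one ingredient that makes this case genuinely different from Lemmas \ref{lem:bi-nonres1} and \ref{lem:bi-nonres2}: the mismatch of time scales. The output is measured in $N_{k_3}$, whose definition localizes in time at scale $2^{-2k_3}$, while the inputs are controlled only in $F_{k_1},F_{k_2}$, i.e.\ on the much shorter intervals of length $2^{-2k_2}$. The paper's proof therefore begins by inserting a partition of unity $\sum_m\gamma^2(2^{2k_2}(t-t_k)-m)\equiv 1$ and summing over the $O(2^{2k_2-2k_3})$ subintervals; this is what legitimately produces the constraint $j_1,j_2\ge 2k_2$ (which you assert without justification --- a single bump at scale $2^{-2k_3}$ only forces $j_i\gtrsim 2k_3$) and it costs a multiplicative factor $2^{2k_2-2k_3}$ that must be carried through the power counting. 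Without this step the reduction to your displayed dyadic sum is not valid, and the $F_{k_i}$ norms cannot be reassembled at the end.

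Your derivative prefactor is also wrong: in the high-high $\Rightarrow$ low regime the symbols of $N_3$ and $N_4$ are $n\,n_1n_2$ and $n\,n_2^2$, so the weight is $2^{k_3}2^{2k_2}$, not $2^{3k_3}$ --- two of the three derivatives sit on the high-frequency inputs. Combined with the subdivision factor, the correct prefactor in front of the modulation sums is $2^{k_3}2^{2k_2}\cdot 2^{2k_2-2k_3}=2^{4k_2}2^{-k_3}$, which differs from your $2^{3k_3}$ by $2^{4(k_2-k_3)}$; the subsequent ``balancing'' you describe cannot recover the stated bound from the wrong starting point. (You also understate the resonance: $|G|\gtrsim|n_1n_2(n_1+n_2)|\,(n_1^2+n_2^2+(n_1+n_2)^2)\sim 2^{4k_2}2^{k_3}$, not $2^{3k_2}2^{k_3}$, which is why the paper uses $j_{max}\ge 4k_2+k_3$ in the $j_3=j_{max}$ subcase.) Your heuristic for the logarithm --- $O(k_2)$ critically balanced modulation blocks --- is in the right spirit; in the paper it comes precisely from the range $2k_3\le j_3<2k_2$ with $j_3=j_{min}$. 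But as written the proposal does not constitute a proof of the lemma.
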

\begin{proof}
Since $k_3 \le k_2 -10$, one can observe that the $N_{k_3}$-norm is taken on the time intervals of length $2^{-2k_3}$, while each $F_{k_i}$-norm is taken on shorter time intervals of length $2^{-2k_i}$, $i=1,2$. Thus, we divide the time interval, which is taken in $N_{k_3}$-norm, into $2^{2k_2-2k_3}$ intervals of length $2^{-2^{2k_2}}$ in order to obtain the right-hand side of \eqref{eq:bi-nonres3-1}. Let $\gamma: \R \to [0,1]$ denote a smooth function supported in $[-1,1]$ with $ \sum_{m\in \Z} \gamma^2(x-m) \equiv 1$. From the definition of $N_{k_3}$-norm, the left-hand side of \eqref{eq:bi-nonres3-1} is dominated by
\begin{equation}\label{eq:bi-nonres3-1.1}
\begin{split}
\sup_{t_k\in \R}&2^{k_3}2^{2k_2}\Big\|(\tau_3-\mu(n_3) +i 2^{2k_3})^{-1}\mathbf{1}_{I_{k_3}}\\
&\cdot  \sum_{|m| \le C 2^{2k_2-2k_3}} \ft[\eta_0(2^{2k_3}(t-t_k))\gamma (2^{2k_2}(t-t_k)-m)P_{k_1}u]\\
 &\hspace{6em}\ast \ft[\eta_0(2^{2k_3}(t-t_k))\gamma (2^{2k_2}(t-t_k)-m)P_{k_2}v]\Big\|_{X_{k_3}}.
\end{split}
\end{equation}
As similarly in the proof of above Lemma, \eqref{eq:bi-nonres3-1.1} is bounded by 
\begin{equation}\label{eq:bi-nonres3-2}
2^{4k_2}2^{-k_3}\sum_{j_3\ge0}\frac{2^{j_3/2}}{\max(2^{j_4}, 2^{2k_3})}\sum_{j_1,j_2 \ge 2k_2}\norm{\mathbf{1}_{D_{k_3,j_3}}(f_{k_1,j_1} \ast f_{k_2,j_2})}_{L_{\tau_3}^2\ell_{n_2}^2}.
\end{equation}
If $j_3 < 2k_3$, since $j_3 \neq j_{max}$, we use \eqref{eq:bi-block estimate-b2.3} for $j_{med} \le 4k_2$ case to estimate $\norm{\mathbf{1}_{D_{k_3,j_3}}(f_{k_1,j_1} \ast f_{k_2,j_2})}_{L_{\tau_3}^2\ell_{n_2}^2}$, then we have
\begin{align*}
\eqref{eq:bi-nonres3-2} &\lesssim 2^{4k_2}2^{-k_3}\sum_{j_3 < 2k_3}2^{j_3/2}2^{-2k_3}\sum_{j_1,j_2 \ge 2k_2}2^{j_{min}/2}\norm{f_{k_1,j_1}}_{L_{\tau}^2\ell_n^2}\norm{f_{k_2,j_2}}_{L_{\tau}^2\ell_n^2}\\
&\lesssim 2^{4k_2}2^{-k_3}2^{-2k_2}2^{-k_3/2}2^{-k_2}\sum_{j_1,j_2 \ge 2k_2}2^{(j_1+j_2)/2}\norm{f_{k_1,j_1}}_{L_{\tau}^2\ell_n^2}\norm{f_{k_2,j_2}}_{L_{\tau}^2\ell_n^2}\\
&\lesssim 2^{k_2}2^{-3k_3/2}\norm{P_{k_1}u}_{F_{k_1}}\norm{P_{k_2}v}_{F_{k_2}}.
\end{align*}
If $2k_3 \le j_3 < 2k_2$, similarly as above, we have
\begin{align*}
\eqref{eq:bi-nonres3-2} &\lesssim 2^{4k_2}2^{-k_3}\sum_{2k_3 \le j_3 < 2k_2}2^{-j_3/2}\sum_{j_1,j_2 \ge 2k_2}2^{j_{min}/2}\norm{f_{k_1,j_1}}_{L_{\tau}^2\ell_n^2}\norm{f_{k_2,j_2}}_{L_{\tau}^2\ell_n^2}\\
&\lesssim k_22^{4k_2}2^{-k_3}2^{-2k_2}2^{-k_3/2}2^{-k_2}\sum_{j_1,j_2 \ge 2k_2}2^{(j_1+j_2)/2}\norm{f_{k_1,j_1}}_{L_{\tau}^2\ell_n^2}\norm{f_{k_2,j_2}}_{L_{\tau}^2\ell_n^2}\\
&\lesssim k_22^{k_2}2^{-3k_3/2}\norm{P_{k_1}u}_{F_{k_1}}\norm{P_{k_2}v}_{F_{k_2}}.
\end{align*}
Now, let us assume that $j_3 \ge 2k_2$. If $j_3 \neq j_{max}$, since $2^{j_{min}} \lesssim 2^{j_1+j_2}2^{-j_{max}}$, we use \eqref{eq:bi-block estimate-b2.3} for $j_{med} \le 4k_2$ case to estimate $\norm{\mathbf{1}_{D_{k_3,j_3}}(f_{k_1,j_1} \ast f_{k_2,j_2})}_{L_{\tau_3}^2\ell_{n_2}^2}$, then we have
\begin{align*}
\eqref{eq:bi-nonres3-2} &\lesssim 2^{4k_2}2^{-k_3}\sum_{ j_3 \ge 2k_2}2^{-j_3/2}\sum_{j_1,j_2 \ge 2k_2}2^{j_{min}/2}\norm{f_{k_1,j_1}}_{L_{\tau}^2\ell_n^2}\norm{f_{k_2,j_2}}_{L_{\tau}^2\ell_n^2}\\
&\lesssim 2^{4k_2}2^{-k_3}2^{-2k_2}2^{-k_3/2}2^{-k_2}\sum_{j_1,j_2 \ge 2k_2}2^{(j_1+j_2)/2}\norm{f_{k_1,j_1}}_{L_{\tau}^2\ell_n^2}\norm{f_{k_2,j_2}}_{L_{\tau}^2\ell_n^2}\\
&\lesssim 2^{k_2}2^{-3k_3/2}\norm{P_{k_1}u}_{F_{k_1}}\norm{P_{k_2}v}_{F_{k_2}}.
\end{align*}
Similarly as before, when $j_3 = j_{max}$, since $j_3 \ge 4k_2+k_3$, we use \eqref{eq:bi-block estimate-b2.1} for $j_{med} \le 3k_2+k_3$ case to estimate $\norm{\mathbf{1}_{D_{k_3,j_3}}(f_{k_1,j_1} \ast f_{k_2,j_2})}_{L_{\tau_3}^2\ell_{n_2}^2}$, then we have
\begin{align*}
\eqref{eq:bi-nonres3-2} &\lesssim 2^{4k_2}2^{-k_3}\sum_{ j_3 \ge 4k_2+k_3}2^{-j_3/2}\sum_{j_1,j_2 \ge 2k_2}2^{j_{min}/2}\norm{f_{k_1,j_1}}_{L_{\tau}^2\ell_n^2}\norm{f_{k_2,j_2}}_{L_{\tau}^2\ell_n^2}\\
&\lesssim 2^{4k_2}2^{-k_3}2^{-2k_2}2^{-k_3/2}2^{-k_2}\sum_{j_1,j_2 \ge 2k_2}2^{(j_1+j_2)/2}\norm{f_{k_1,j_1}}_{L_{\tau}^2\ell_n^2}\norm{f_{k_2,j_2}}_{L_{\tau}^2\ell_n^2}\\
&\lesssim 2^{k_2}2^{-3k_3/2}\norm{P_{k_1}u}_{F_{k_1}}\norm{P_{k_2}v}_{F_{k_2}}.
\end{align*}
Thus, we complete the proof of Lemma \ref{lem:bi-nonres3}.
\end{proof}

\begin{lemma}[low-low $\Rightarrow$ low]\label{lem:bi-nonres4}
Let $0 \le k_1,k_2,k_3 \le 200$. Then, we have 
\begin{equation}\label{eq:bi-nonres4-1}
\begin{aligned}
\norm{P_{k_3}N_3(P_{k_1}u,P_{k_2}v)}_{N_{k_3}} &+ \norm{P_{k_3}N_{4}(P_{k_1}u,P_{k_2})}_{N_{k_3}}\lesssim \norm{P_{k_1}u}_{F_{k_1}}\norm{P_{k_2}v}_{F_{k_2}}
\end{aligned}
\end{equation} 
\end{lemma}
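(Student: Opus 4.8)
The plan is to treat \eqref{eq:bi-nonres4-1} as the benign regime in which every frequency is comparable to $1$: since $k_1,k_2,k_3\le 200$, all derivative factors carried by $N_3$ and $N_4$, the resonance function $G(n_1,n_2)$, and the time–localization scales $2^{2k_i}$ are bounded by absolute constants (everything is $\lesssim 2^{600}$). Consequently no cancellation and no modulation gain are needed, and the crudest $L^2$–block bound of Corollary \ref{cor:bi-L2}(c) will suffice. In particular the non-resonant restriction to $\N_{2,n}$ is not exploited; it only discards terms.

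First I would unfold the definitions of the $N_{k_3}$– and $X_{k_3}$–norms exactly as in the proof of Lemma \ref{lem:bi-nonres1}. Writing $f_{k_1}=\ft[\eta_0(2^{2k_3-2}(t-t_k))P_{k_1}u]$ and $f_{k_2}=\ft[\eta_0(2^{2k_3-2}(t-t_k))P_{k_2}v]$, the bilinear symbol of $N_3$ is $n\,n_1 n_2$ and that of $N_4$ is $n\,n_2^2$; on the relevant support $|n|,|n_1|,|n_2|\lesssim 2^{200}$, so each symbol is bounded by an absolute constant and may simply be pulled out. This bounds the left-hand side of \eqref{eq:bi-nonres4-1}, up to an $O(1)$ factor, by
\[\sup_{t_k}\normo{(\tau-\mu(n)+i2^{2k_3})^{-1}\mathbf{1}_{I_{k_3}}(n)\,(f_{k_1}\ast f_{k_2})}_{X_{k_3}}.\]

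Next I would decompose $f_{k_i}=\sum_{j_i}f_{k_i,j_i}$ into modulation pieces and apply the generic block bound \eqref{eq:bi-block estimate-c1} (in its convolution form, Corollary \ref{cor:bi-L2}(c)), which needs neither frequency separation nor information on $G$. Since $k_{min}\le 200$, the factor $2^{k_{min}/2}$ is $O(1)$, so the remaining modulation sum is controlled by
\[\sum_{j_3\ge0}\frac{2^{j_3/2}}{\max(2^{j_3},2^{2k_3})}\sum_{j_1,j_2}2^{j_{min}/2}\norm{f_{k_1,j_1}}_{L^2}\norm{f_{k_2,j_2}}_{L^2}.\]
To sum this I would use $2^{j_{min}/2}\le 2^{(j_1+j_2)/4}$ (valid since $j_{min}\le\min(j_1,j_2)\le\tfrac12(j_1+j_2)$), which decouples the $j_3$–sum; the latter is $\sum_{j_3}2^{j_3/2}/\max(2^{j_3},2^{2k_3})\lesssim 2^{-k_3}\le1$, while the $j_1,j_2$ sums reconstruct $\norm{f_{k_1}}_{X_{k_1}}\norm{f_{k_2}}_{X_{k_2}}$. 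The identical computation handles $N_4$.

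It then remains to convert $\norm{f_{k_i}}_{X_{k_i}}$ into $\norm{P_{k_1}u}_{F_{k_1}}$ and $\norm{P_{k_2}v}_{F_{k_2}}$, and this bookkeeping — passing from the $N_{k_3}$ localization at scale $2^{-2k_3}$ to the $F_{k_i}$ localization at scale $2^{-2k_i}$ — is the only (very mild) obstacle, since here $k_3$ may be either larger or smaller than $k_i$. When $k_3\ge k_i$ it is immediate from \eqref{eq:prop2}, as multiplying by the finer bump $\eta_0(2^{2k_3-2}(t-t_k))$ is bounded on $X_{k_i}$; when $k_3<k_i$ one subdivides the interval of length $2^{-2k_3}$ into at most $2^{2k_i-2k_3}\le 2^{400}$ subintervals of length $2^{-2k_i}$, exactly as in Lemma \ref{lem:bi-nonres3}, and sums the $O(1)$ many contributions. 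In either case only an absolute constant is lost, which closes \eqref{eq:bi-nonres4-1}.
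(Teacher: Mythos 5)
Your proof is correct and follows essentially the route the paper intends: the paper disposes of this lemma in one line by reference to the proof of Lemma \ref{lem:bi-nonres2} (unfold the $N_{k_3}$/$X_{k_3}$ norms, decompose in modulation, apply the $L^2$-block estimates of Corollary \ref{cor:bi-L2}, and sum), and your argument is a correct instantiation of that scheme in the regime where all frequency symbols, resonance factors and time-scale mismatches are $O(1)$. Your substitution of the crude bound of Corollary \ref{cor:bi-L2}(c) for part (a) is the sensible choice here, since the lower bound $2^{j_{max}}\gtrsim 2^{5k_3}$ exploited in Lemma \ref{lem:bi-nonres2} carries no content (and is not needed) when $k_3\le 200$.
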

\begin{proof}
Similarly as in the proof of Lemma \ref{lem:bi-nonres2}, we can get \eqref{eq:bi-nonres4-1}.
\end{proof}

Now, we focus on the cubic non-resonant interaction component. Here cubic non-resonant interaction terms is weaker than that of the fifth-order mKdV equation due to the loss of two derivatives in the high frequency piece. Similarly as in the section 5 in \cite{Kwak2015}, we can obtain the following result without the detailed proof:
\begin{lemma}\label{lem:cubic}$\;$

(a) \emph{(High - high - high $\Rightarrow$ high)} Let $k_4 \ge 20$ and $|k_1-k_4|, |k_2-k_4|, |k_3-k_4|\le 5$. Then, we have 
\[\norm{P_{k_4}N_2(P_{k_1}u,P_{k_2}v,P_{k_3}w)}_{N_{k_4}}\lesssim 2^{-k_3/2}\norm{P_{k_1}u}_{F_{k_1}}\norm{P_{k_2}v}_{F_{k_2}}\norm{P_{k_3}w}_{F_{k_3}}.\]

(b) \emph{(High - high - low $\Rightarrow$ high)} Let $k_4 \ge 20$, $|k_2-k_4|, |k_3-k_4|\le 5$ and $k_1 \le k_4 - 10$. Then, we have 
\[\norm{P_{k_4}N_2(P_{k_1}u,P_{k_2}v,P_{k_3}w)}_{N_{k_4}}\lesssim 2^{-2k_3}2^{k_1/2}\norm{P_{k_1}u}_{F_{k_1}}\norm{P_{k_2}v}_{F_{k_2}}\norm{P_{k_3}w}_{F_{k_3}}.\]

(c) \emph{(High - high - high $\Rightarrow$ low)} Let $k_3 \ge 20$, $|k_1-k_3|, |k_2-k_3|\le 5$ and $k_4 \le k_3 -10$. Then, we have 
\[\norm{P_{k_4}N_2(P_{k_1}u,P_{k_2}v,P_{k_3}w)}_{N_{k_4}}\lesssim k_32^{-k_3}2^{-k_4/2}\norm{P_{k_1}u}_{F_{k_1}}\norm{P_{k_2}v}_{F_{k_2}}\norm{P_{k_3}w}_{F_{k_3}}.\]

(d) \emph{(High - low - low $\Rightarrow$ high)} Let $k_4 \ge 20$, $|k_3-k_4|\le 5$ and $k_1,k_2 \le k_4 -10$. Then, we have 
\[\norm{P_{k_4}N_2(P_{k_1}u,P_{k_2}v,P_{k_3}w)}_{N_{k_4}}\lesssim 2^{-2k_4}2^{k_{min}/2}\norm{P_{k_1}u}_{F_{k_1}}\norm{P_{k_2}v}_{F_{k_2}}\norm{P_{k_3}w}_{F_{k_3}}.\]

(e) \emph{(High - high - low $\Rightarrow$ low)} Let $k_3 \ge 20$, $|k_2-k_3|\le 5$ and $k_1, k_4 \le k_3 - 10$. Then, we have 
\[\norm{P_{k_4}N_2(P_{k_1}u,P_{k_2}v,P_{k_3}w)}_{N_{k_4}}\lesssim k_32^{-k_3}C(k_1,k_4)\norm{P_{k_1}u}_{F_{k_1}}\norm{P_{k_2}v}_{F_{k_2}}\norm{P_{k_3}w}_{F_{k_3}},\] 
where 
\[C(k_1,k_4)= 
\begin{cases}
2^{-3k_4/2}2^{k_1/2} \hspace{1em}  &, k_1 \le k_4 -10 \\
2^{-k_4} \hspace{1em}  &, k_4 \le k_1 - 10\\
2^{-k_4/2} \hspace{1em}  &,|k_1 -k_4| <10
\end{cases}
.\]
(f) \emph{(low - low - low $\Rightarrow$ low)} Let $0 \le k_1 ,k_2, k_3, k_4 \le 200$. Then, we have 
\[\norm{P_{k_4}N_2(P_{k_1}u,P_{k_2}v,P_{k_3}w)}_{N_{k_4}}\lesssim \norm{P_{k_1}u}_{F_{k_1}}\norm{P_{k_2}v}_{F_{k_2}}\norm{P_{k_3}w}_{F_{k_3}}.\]
\end{lemma}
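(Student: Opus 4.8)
The plan is to reproduce, at the quadrilinear level, the scheme already carried out for the bilinear Lemmas \ref{lem:bi-nonres1}--\ref{lem:bi-nonres4}, the essential simplification being that the cubic term $N_2$ carries only a single derivative (the factor $in$ in \eqref{eq:5kdv3}), so only one power $2^{k_4}$ must be absorbed rather than three. First I would unfold the $N_{k_4}$-norm: after inserting the time cut-off $\eta_0(2^{2k_4-2}(t-t_k))$ on each input and writing $f_{k_i}=\ft[\eta_0(\cdot)P_{k_i}(\cdot)]$, the quantity $\norm{P_{k_4}N_2(P_{k_1}u,P_{k_2}v,P_{k_3}w)}_{N_{k_4}}$ is dominated by the triple convolution $f_{k_1}\ast f_{k_2}\ast f_{k_3}$, multiplied by $2^{k_4}\mathbf 1_{I_{k_4}}$ and by the resolvent $(\tau-\mu(n)+i2^{2k_4})^{-1}$, measured in $X_{k_4}$. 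Decomposing each factor into modulation pieces $f_{k_i,j_i}$ and the output into $D_{k_4,j_4}$ reduces the whole matter to bounding $\norm{\mathbf 1_{D_{k_4,j_4}}(f_{k_1,j_1}\ast f_{k_2,j_2}\ast f_{k_3,j_3})}_{L^2}$ and summing the resulting geometric series in the $j$'s by the resolvent weight $2^{j_4/2}/\max(2^{j_4},2^{2k_4})$, exactly as in the proof of Lemma \ref{lem:bi-nonres3}.

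Next, for each fixed modulation configuration I would reduce this quadrilinear block to a bilinear one by \emph{freezing} a single input factor. Using the last inequality of Lemma \ref{lem:prop of Xk} together with the fact that $f_{k_i,j_i}$ is supported on $O(2^{k_i})$ frequencies, one has $\norm{\int_{\R}|f_{k_i,j_i}(\tau',\cdot)|\,d\tau'}_{\ell^1_n}\lesssim 2^{k_i/2}\norm{f_{k_i,j_i}}_{L^2}$, so that factor can be pulled out of the convolution at the cost $2^{k_i/2}$, leaving a genuine bilinear convolution of the other two factors to which the appropriate branch of Corollary \ref{cor:bi-L2} applies. Which factor to freeze is dictated by the target bound: in (b) and (e) one freezes the low factor $P_{k_1}u$ (whence the $2^{k_1/2}$), while the two comparable high factors are handled by the high-high branch \eqref{eq:bi-block estimate-a2.1}; in (d) the $2^{k_{min}/2}$ reflects freezing the lower of the two low factors; and the three-way split of $C(k_1,k_4)$ in (e) simply records which of \eqref{eq:bi-block estimate-b2.1}--\eqref{eq:bi-block estimate-b2.4} is active. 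Crucially, the non-resonance constraint defining $\N_{3,n}$, namely $(n_1+n_2)(n_1+n_3)(n_2+n_3)\neq 0$ from \eqref{eq:resonance function2}, guarantees that whichever factor is frozen the remaining pair has nonvanishing frequency sum, so its bilinear resonance $G$ of Section \ref{sec:L2 block estimate} is large and the block estimate supplies the decisive modulation gain $2^{-j_{max}/2}$; balancing this against the single derivative $2^{k_4}$ then yields the stated powers in the balanced cases (a), (b), (f).

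The genuinely delicate cases are those with a low output frequency, (c) and (e), where $k_4\le k_{max}-10$. There the $N_{k_4}$-norm lives on time intervals of length $2^{-2k_4}$, much longer than the $2^{-2k_{max}}$ scale on which the $F_{k_i}$-norms of the high factors are controlled; as in the proof of Lemma \ref{lem:bi-nonres3}, I would insert a partition of unity $\sum_m\gamma^2(2^{2k_{max}}(t-t_k)-m)\equiv 1$ and sum over the $O(2^{2k_{max}-2k_4})$ sub-intervals. The small modulation range $2k_4\le j\le 2k_{max}$ then contributes, after summation with no gain, precisely the logarithmic loss $k_{max}$ that appears as the $k_3$-prefactor in (c) and (e), while the output modulations $j\ge 2k_{max}$ are recovered by the resolvent as in the earlier lemmas.

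I expect the main obstacle to be exactly this interplay in (c) and (e): one must verify that the number of time sub-intervals $2^{2k_{max}-2k_4}$, the single derivative $2^{k_4}$, the resolvent normalization $2^{2k_4}$, and the modulation and counting gains of Corollary \ref{cor:bi-L2} combine to leave only the claimed negative power of $2^{k_4}$ together with a merely logarithmic, not polynomial, loss in $k_{max}$. The balanced cases (a), (b), (f) become routine once the freezing reduction of the previous paragraph is in place, and (d) is intermediate, so I would organize the proof around the quadrilinear-to-bilinear reduction first and treat the low-output cases (c), (e) last.
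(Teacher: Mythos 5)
Your overall architecture is the right one, and it is essentially what the paper intends: the paper itself gives no proof of this lemma, deferring to Section 5 of \cite{Kwak2015}, and the intended argument is exactly the one you describe --- unfold the $N_{k_4}$-norm with the resolvent weight $2^{j_4/2}/\max(2^{j_4},2^{2k_4})$, decompose into modulation blocks, reduce the quadrilinear form to the bilinear $L^2$-block estimates by freezing one input, and insert the time partition $\sum_m\gamma(2^{2k_{max}}t-m)$ in the low-output cases (c) and (e), where the sum over the $O(2^{2k_{max}-2k_4})$ subintervals and the modulation range $2k_4\le j_4\le 2k_{max}$ produce the factor $k_3$. Your identification of which factor to freeze in each case, and of (c), (e) as the delicate cases, matches the structure of Lemmas \ref{lem:bi-nonres1}--\ref{lem:bi-nonres3}.

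Three points in your reduction need repair before the exponents can actually be checked. First, the freezing inequality as you state it is off by a modulation factor: from the last display of Lemma \ref{lem:prop of Xk} and Cauchy--Schwarz one gets $\norm{\int_{\R}|f_{k_i,j_i}(\tau',\cdot)|\,d\tau'}_{\ell^1_n}\lesssim 2^{k_i/2}2^{j_i/2}\norm{f_{k_i,j_i}}_{L^2}$, not $2^{k_i/2}\norm{f_{k_i,j_i}}_{L^2}$; the extra $2^{j_i/2}$ is harmless only because it is exactly the weight in the $X_{k_i}$-summation, and this must be tracked. Second, after freezing $f_{k_i,j_i}$ the output of the remaining convolution sits at $(\tau-\tau_i,n-n_i)$, whose modulation differs from $\tau-\mu(n)$ by $\tau_i-\mu(n_i)$ \emph{and} by the bilinear resonance $G(n_i,n-n_i)$, which depends on the output frequency; Corollary \ref{cor:bi-L2} therefore does not apply verbatim, and one must go back to the counting statement inside Lemma \ref{lem:bi-L2} (which is uniform in the location of the interval $\mu(n_1)+\mu(n_2)=\mathrm{const}+O(2^{j_{med}})$) rather than to the corollary as stated. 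Third, your claim that the constraint $(n_1+n_2)(n_1+n_3)(n_2+n_3)\neq 0$ from \eqref{eq:resonance function2} always forces a large $j_{max}$ is false precisely in case (a): when all four frequencies are comparable, two of the pairwise sums can be $O(1)$ and $|H_3|$ can be as small as $2^{2k_4}$, so no modulation gain beyond the automatic $j_i\ge 2k_4$ is available; the $2^{-k_3/2}$ in (a) must instead come from the lattice-point count $2^{-3k_{max}/2}2^{j_{med}/2}$ of Lemma \ref{lem:bi-L2}(a) applied to the two unfrozen factors. Your mechanism is correct for (b), (d), where the low input forces $|H_3|\sim 2^{k_1}2^{4k_4}$ or larger, but the proposal as written does not verify any of the six exponents, and in particular the distinction between the counting gain (case (a)) and the resonance gain (cases (b), (d)) is where a naive implementation would go wrong.
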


As a conclusion to this section, we prove the nonlinear estimates for \eqref{eq:5kdv3} by gathering the block estimates obtained above.

\begin{proposition}\label{prop:nonlinear1}
(a) If $s > 1$, $T \in (0,1]$ and $u,v,w, \in F^s(T)$, then
\[\begin{aligned}
&\norm{N_1(u,v,w)}_{N^s(T)}+\norm{N_2(u,v,w)}_{N^s(T)} + \norm{N_3(u,v)}_{N^s(T)} + \norm{N_4(u,v)}_{N^s(T)}\\
&\hspace{20em} \lesssim \norm{u}_{F^s(T)}\norm{v}_{F^s(T)} + \norm{u}_{F^s(T)}\norm{v}_{F^s(T)}\norm{w}_{F^s(T)}.
\end{aligned}\]

(b) 
\[\begin{aligned}
&\norm{N_1(u,v,w)}_{N^0(T)}+\norm{N_2(u,v,w)}_{N^0(T)} + \norm{N_3(u,v)}_{N^0(T)} + \norm{N_4(u,v)}_{N^0(T)}\\
&\hspace{20em}\lesssim \norm{u}_{F^{1+}}\norm{v}_{F^0} + \norm{u}_{F^{\frac12+}(T)}\norm{v}_{F^{\frac12+}(T)}\norm{w}_{F^0(T)}.
\end{aligned}\]
\end{proposition}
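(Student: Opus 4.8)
The plan is to reduce both estimates to the single-frequency block estimates already in hand (Lemmas \ref{lem:resonant2} and \ref{lem:bi-nonres1}--\ref{lem:cubic}) and then to sum the resulting dyadic bounds. First I would pass from the time-localized spaces $N^s(T),F^s(T)$ to the global spaces $N_k,F_k$ by choosing extensions realizing the infima in the definitions of the local norms, so that it suffices to bound $\sum_k 2^{2sk}\norm{P_kN_i(\cdots)}_{N_k}^2$ by products of $\sum_k 2^{2sk}\norm{P_k\cdot}_{F_k}^2$. Next I would insert Littlewood--Paley projections on every input and output: for the quadratic terms write $P_{k_3}N_i(u,v)=\sum_{k_1,k_2}P_{k_3}N_i(P_{k_1}u,P_{k_2}v)$, $i=3,4$, and a triple sum for $N_1,N_2$. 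The support property \eqref{eq:support property} forces $2^{k_{max}}\sim 2^{k_{sub}}$, so for each output $k_3$ only the frequency-balanced configurations survive; I would split these into the regimes (high$\times$low$\Rightarrow$high), (high$\times$high$\Rightarrow$high), (high$\times$high$\Rightarrow$low), (low$\times$low$\Rightarrow$low) for the quadratic part and the six regimes of Lemma \ref{lem:cubic} for $N_2$, with the resonant term $N_1$ handled directly by Lemma \ref{lem:resonant2}.

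For part (a) I would then apply the appropriate block estimate in each regime and carry out the dyadic summation. In the balanced regimes the output weight $2^{sk_3}$ is comparable to $2^{sk_{max}}$ and can be distributed onto the high-frequency inputs, converting the block bounds into the $F^s$ pieces; the frequency gains ($2^{-k_1/2}$, $2^{-k_2/2}$, $2^{-k}$, and their cubic analogues) provide geometric decay which, combined with Cauchy--Schwarz over the remaining free index, yields the product $\norm{u}_{F^s(T)}\norm{v}_{F^s(T)}$ and the analogous trilinear products. The most delicate point is the high$\times$high$\Rightarrow$low quadratic interaction: here Lemma \ref{lem:bi-nonres3} only gives $k_2 2^{k_2}2^{-3k_3/2}$, so one full derivative at the high frequency is stranded at the low output. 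Absorbing the factor $2^{k_2}$ requires the weight $2^{-2sk_2}$ coming from the two high inputs, which leaves $k_2 2^{(1-2s)k_2}$; the logarithmic loss $k_2$ is then killed by any room in the exponent, and this regime is what constrains the admissible range of $s$ in the assembly. The multi-low cubic regimes (d) and (e) I would handle by a Schur test on the kernel $2^{\min(k_i,k_j)/2-sk_i-sk_j}$, whose row and column sums are finite in the stated range.

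For part (b) the output is measured in $N^0(T)$, so no weight is available on $k_3$, and the point is to place the surplus derivatives asymmetrically on a single input while measuring the others in $F^0$. This is exactly the form needed to run the Bona--Smith argument for the difference of two solutions (cf. the footnote to \eqref{eq:brief proof}), where the symmetry among the factors is lost. Here the same high$\times$high$\Rightarrow$low quadratic interaction forces one of the two inputs into $F^{1+}$: the $2^{k_2}$ loss must be paid by one derivative on $u$, and the extra $0+$ absorbs the logarithm $k_2$, while the other factor needs only $F^0$. For the cubic terms the $2^{k_{min}/2}$-type gains in Lemma \ref{lem:cubic} require half a derivative on two of the three factors, producing the $F^{\frac12+}(T)\times F^{\frac12+}(T)\times F^0(T)$ structure with the third factor in $F^0$.

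The heavy analytic content lives in the block estimates, so the main obstacle here is bookkeeping rather than harmonic analysis: organizing the frequency interactions, checking that every gain is summable against the relevant $2^{sk}$ weights, and carefully handling the borderline logarithmic factors $k_2,k_3$ that appear in the high$\times$high$\Rightarrow$low regimes. The conceptually important case remains the high$\times$low$\Rightarrow$high interaction, which is precisely the one for which the classical $X^{s,b}$ bilinear estimate fails (Theorem \ref{thm:bilinear}); it closes here only because the short-time $F_k$ structure is measured on the $2^{-2k}$ time scale, a gain already encoded in Lemma \ref{lem:bi-nonres1}.
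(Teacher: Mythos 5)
Your proposal is correct and follows essentially the same route as the paper, which itself reduces Proposition \ref{prop:nonlinear1} to the dyadic block estimates of Lemmas \ref{lem:resonant2}--\ref{lem:cubic} and refers to \cite{Guo2012} for the standard Littlewood--Paley summation; your write-up simply supplies the bookkeeping (extensions, frequency decomposition, distribution of the $2^{sk}$ weights, treatment of the logarithmic losses) that the paper omits. The only minor quibble is your identification of the high$\times$high$\Rightarrow$low quadratic regime as the binding constraint on $s$ in part (a): the factor $k_2 2^{(1-2s)k_2}$ there already sums for $s>\frac12$, and the threshold $s>1$ is really forced by the cubic low-frequency summations; this does not affect the validity of the argument.
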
 
\begin{proof}
The proof follows from the dyadic bilinear and trilinear estimates. See \cite{Guo2012} for similar proof.
\end{proof}

\section{Energy estimates}\label{sec:energy}
In this section, we will control $\norm{u}_{E^s(T)}$ for \eqref{eq:5kdv3} by $\norm{u_0}_{H^s}$ and $\norm{u}_{F^s(T)}$. In the following section, we also assume that $|\wh{u_0}| \le 10$ in order to use
\[|G(n_1,n_2)| \gtrsim |n_1n_2(n_1+n_2)|(n_1^2 + n_2^2 + (n_1+n_2)^2)\]
in the support property \eqref{eq:support property}.

Let us define, for $k \ge 1$, $\psi(n):=n\chi'(n)$ and $\psi_k(n) = \psi(2^{-k}n)$, where $\chi$ is defined in \eqref{eq:cut-off1} and $'$ denote the derivative. Then, we have from the simple observation and the definition of $\chi_k$ that
\[\psi_k(n) = n\chi_k'(n).\]

\begin{remark}
The reason why we define another cut-off function $\psi_k$ is to use the second-order Taylor's theorem for the commutator estimates (see Lemma \ref{lem:commutator2}). But, for the other estimates, it does not need to distinguish between $\psi_k$ and $\chi_k$, since both play a role of frequency support in the other estimates.
\end{remark}

Recall \eqref{eq:5kdv3} by slightly modifying as follows:
\begin{equation}\label{eq:5kdv4}
\begin{aligned}
\pt \wh{u}(n) - i\mu(n)\wh{u}(n) &= -30in|\wh{u}(n)|^2\wh{u}(n) \\
&+10in\sum_{\N_{3,n}}\wh{u}(n_1)\wh{u}(n_2)\wh{u}(n_3) \\
&+10in\sum_{\N_{2,n}}\wh{u}(n_1)n_2^2\wh{u}(n_2) \\
&+10i\sum_{\N_{2,n}}n_1\wh{u}(n_1)n_2^2\wh{u}(n_2) \\
&=: \wh{N}_{1,1}(u) + \wh{N}_{1,2}(u) + \wh{N}_{1,3}(u) + \wh{N}_{1,4}(u),
\end{aligned}
\end{equation}
Denote the last three terms in the right-hand side of \eqref{eq:5kdv4} by $\wh{N}_1(u)(n)$. We perform the following procedure for $k \ge 1$,
\[\sum_{n}\chi_k(n)\eqref{eq:5kdv4} \times \chi_k(-n)\wh{v}(-n) + \overline{\chi_k(n)\eqref{eq:5kdv4}} \times \chi_k(n)\wh{v}(n),\]
where $\overline{\eqref{eq:5kdv4}}$ means to take the complex conjugate on \eqref{eq:5kdv4}, then we have
\begin{align*}
\pt\norm{P_ku}_{L_x^2}^2 &= - \mbox{Re}\left[20i \sum_{n,\overline{\N}_{3,n}}\chi_k(n)n \wh{u}(n_1)\wh{u}(n_2)\wh{u}(n_3)\chi_k(n)\wh{u}(n)\right] \\
&-\mbox{Re}\left[20i \sum_{n,\overline{\N}_{2,n}}\chi_k(n)n \wh{u}(n_1)n_2^2\wh{u}(n_2)\chi_k(n)\wh{u}(n)\right] \\
&-\mbox{Re}\left[20i \sum_{n,\overline{\N}_{2,n}} \chi_k(n)n_1\wh{u}(n_1)n_2^2\wh{u}(n_2)\chi_k(n)\wh{u}(n)\right]\\
&=: E_1 +E_2 + E_3,
\end{align*}
where $\overline{\N}_{2,n}= \N_{2,-n}=\set{(n_1,n_2) \in \Z^2 : n_1+n_2+n=0, nn_1n_2 \neq 0}$. 

For $k \ge 1$, let us define the new localized energy of $u$ by
\begin{equation}\label{eq:new energy2-1}
\begin{aligned}
E_k(u)(t) &= \norm{P_ku(t)}_{L_x^2}^2 + \mbox{Re}\left[\alpha \sum_{n,\overline{\N}_{2,n}}\wh{u}(n_1)\psi_k(n_2)\frac{1}{n_2}\wh{u}(n_2)\chi_k(n)\frac1n\wh{u}(n)\right]\\
&+ \mbox{Re}\left[\beta \sum_{n,\overline{\N}_{2,n}}\wh{u}(n_1)\chi_k(n_2)\frac{1}{n_2}\wh{u}(n_2)\chi_k(n)\frac1n\wh{u}(n)\right],
\end{aligned}
\end{equation}
where $\alpha$ and $\beta$ are real and will be chosen later. By gathering all localized energies, we define the new modified energy for \eqref{eq:5kdv4} by   
\begin{equation}\label{eq:new energy2-2}
E_{T}^s(u) = \norm{P_0u(0)}_{L_x^2}^2 + \sum_{k \ge 1}2^{2sk} \sup_{t_k \in [-T,T]} E_{k}(u)(t_k).
\end{equation}

The following lemma shows that $E_{T}^s(u)$ and $\norm{u}_{E^s(T)}$ are comparable.
\begin{lemma}\label{lem:comparable energy2-1}
Let $s > \frac12$. Then, there exists $0 < \delta \ll 1$ such that  
\[\frac12\norm{u}_{E^s(T)}^2 \le E_{T}^s(u) \le \frac32\norm{u}_{E^s(T)}^2,\]
for all $u \in E^s(T) \cap C([-T,T];H^s(\T))$ satisfying $\norm{u}_{L_T^{\infty}H^s(\T)} \le \delta$.
\end{lemma}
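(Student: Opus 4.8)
The plan is to isolate the difference between $E_T^s(u)$ and $\norm{u}_{E^s(T)}^2$ and show it is a small perturbation. Comparing \eqref{eq:new energy2-2} with the definition of $\norm{u}_{E^s(T)}^2$, the $P_0$ terms agree, so everything reduces to comparing, for each $k \ge 1$, the localized energy $E_k(u)(t_k)$ in \eqref{eq:new energy2-1} with $\norm{P_k u(t_k)}_{L_x^2}^2$. The difference is exactly the two cubic correction terms (the $\alpha$- and $\beta$-terms). I would first show that, for every fixed $t \in [-T,T]$, each correction term is bounded by $C\delta 2^{-2k}\norm{P_k u(t)}_{L_x^2}^2$; the comparison of the full energies then follows.

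The key step is a pointwise-in-time trilinear estimate. On the support $\overline{\N}_{2,n}$ one has $n + n_1 + n_2 = 0$, and the cutoffs $\chi_k(n)$ together with $\psi_k(n_2)$ (or $\chi_k(n_2)$) force $|n| \sim |n_2| \sim 2^k$; these cutoffs are uniformly bounded, while the factors $\frac{1}{n_2}$ and $\frac1n$ each contribute a gain of $2^{-k}$, giving the prefactor $2^{-2k}$. Applying Cauchy--Schwarz in the $n_2$-summation to the two frequency-$2^k$ localized factors $\wh u(n_2)$ and $\wh u(n) = \wh u(-(n_1+n_2))$ produces $\norm{P_k u(t)}_{L_x^2}^2$, and the remaining factor is controlled by $\sum_{n_1}|\wh u(t,n_1)| \lesssim_s \norm{u(t)}_{H^s}$, which holds by Cauchy--Schwarz precisely because $s > \frac12$ makes $\bra{n_1}^{-s}$ square-summable. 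This is exactly where the hypothesis $s > \frac12$ enters. Altogether each correction is $\lesssim 2^{-2k}\norm{u(t)}_{H^s}\norm{P_k u(t)}_{L_x^2}^2 \le C\delta 2^{-2k}\norm{P_k u(t)}_{L_x^2}^2$ using $\norm{u}_{L_T^\infty H^s} \le \delta$.

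With this in hand I would pass from the pointwise bound to the energy comparison. For each fixed $t_k$,
\[(1 - C\delta 2^{-2k})\norm{P_k u(t_k)}_{L_x^2}^2 \le E_k(u)(t_k) \le (1 + C\delta 2^{-2k})\norm{P_k u(t_k)}_{L_x^2}^2.\]
Since the multiplicative constants $1 \pm C\delta 2^{-2k}$ do not depend on $t_k$, they factor out of $\sup_{t_k \in [-T,T]}$, so the same inequalities hold with $E_k(u)(t_k)$ and $\norm{P_k u(t_k)}_{L_x^2}^2$ replaced by their suprema. Multiplying by $2^{2sk}$, summing over $k \ge 1$, and adding the identical $P_0$ contribution yields $(1 - C\delta)\norm{u}_{E^s(T)}^2 \le E_T^s(u) \le (1 + C\delta)\norm{u}_{E^s(T)}^2$. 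Choosing $\delta$ small enough that $C\delta \le \frac12$ gives the claim.

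The main obstacle is the correct bookkeeping of the supremum in time: one cannot split a supremum of a sum, but here it is not needed, since the correction at time $t_k$ is dominated by the quadratic term $\norm{P_k u(t_k)}_{L_x^2}^2$ at the \emph{same} time $t_k$, making the comparison multiplicative with a deterministic constant that passes cleanly through $\sup_{t_k}$. The only genuinely delicate analytic point is the $\ell^1_{n_1}$ summation of the unlocalized factor, which forces $s > \frac12$; the extra decay $2^{-2k}$ coming from the two inverse-frequency factors is a comfortable safety margin (smallness of $\delta$ alone would already suffice), and the distinction between $\psi_k$ and $\chi_k$ is irrelevant here since both are $O(1)$ on their supports.
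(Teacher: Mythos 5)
Your argument is correct and is essentially the paper's own proof fleshed out: the paper disposes of this lemma in one line by invoking the Sobolev embedding $H^s(\T)\hookrightarrow L^\infty(\T)$ for $s>\tfrac12$ and citing Lemma 5.1 of \cite{KP2015}, and your $\ell^1_{n_1}$ summation $\sum_{n_1}|\wh u(n_1)|\lesssim_s\norm{u}_{H^s}$ is exactly that embedding in Fourier-coefficient form, combined with the same $2^{-2k}$ gain from the $\tfrac1{n_2}\tfrac1n$ factors. One small imprecision: the Cauchy--Schwarz in $n_2$ controls the correction by $C\delta\,2^{-2k}\norm{P_{[k-1,k+1]}u(t_k)}_{L^2_x}^2$ rather than by $\norm{P_ku(t_k)}_{L^2_x}^2$ alone, so your displayed multiplicative inequality $(1\pm C\delta 2^{-2k})\norm{P_ku(t_k)}^2$ is not literally valid blockwise (it fails if $P_ku(t_k)$ vanishes while a neighboring block does not); the repair is to keep the error additive, use $\sup_t(A-B)\ge\sup_tA-\sup_tB$, and absorb the finitely many neighboring blocks after the weighted summation in $k$, which still yields $E_T^s(u)=\norm{u}_{E^s(T)}^2+O(\delta)\norm{u}_{E^s(T)}^2$ and hence the claim.
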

\begin{proof}
The proof follows from the Sobolev embedding $H^s(\T) \hookrightarrow L^{\infty}(\T)$, $s > 1/2$. See Lemma 5.1 in \cite{KP2015} for the details.
\end{proof}

The following lemmas are useful to estimate the modified energy.
\begin{lemma}\label{lem:energy2-1}
Let $T \in (0,1]$, $k_1,k_2,k_3 \in \Z_+$, and $u_i \in F_{k_i}(T)$, $i=1,2,3$. We further assume $k_1 \le k_2 \le k_3$ with $k_3 \ge 10$. Then

(a) For $|k_1 - k_3| \le 5 $, we have
\begin{equation}\label{eq:energy2-1.1}
\left| \sum_{n_3,\overline{\N}_{2,n_3}} \int_0^T  \wh{u}_1(n_1)\wh{u}_2(n_2)\wh{u}_3(n_3) \; dt\right| \lesssim 2^{-3k_3/2}\prod_{i=1}^{3}\norm{u_i}_{F_{k_i}(T)}.
\end{equation}

(b) For $|k_2 - k_3| \le 5 $ and $k_1 \le k_3 - 10$, we have
\begin{equation}\label{eq:energy2-1.2}
\left| \sum_{n_3,\overline{\N}_{2,n_3}} \int_0^T  \wh{u}_1(n_1)\wh{u}_2(n_2)\wh{u}_3(n_3) \; dt\right| \lesssim 2^{-k_3}2^{-k_1/2}\prod_{i=1}^{3}\norm{u_i}_{F_{k_i}(T)}.
\end{equation}
\end{lemma}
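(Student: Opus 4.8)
The plan is to reduce the time-integrated trilinear estimate \eqref{eq:energy2-1.1}--\eqref{eq:energy2-1.2} to the $L^2$-block estimates of Lemma \ref{lem:bi-L2}, by first localizing in time and passing to the Fourier side. Since the factors $u_i$ live only in $F_{k_i}(T)$, the first step is to extend each $u_i$ to a function $\wt{u}_i$ on $\R\times\T$ agreeing with $u_i$ on $[-T,T]$ and nearly realizing the $F_{k_i}$ norm. I would then insert a smooth time cutoff: using the partition $\sum_m \gamma^2(2^{2k_3}t - m)\equiv1$ adapted to the shortest relevant time scale $2^{-2k_3}$, I localize the integral $\int_0^T$ into $O(2^{2k_3})$ pieces on intervals of length $2^{-2k_3}$, on each of which I may replace $u_i$ by $\gamma(2^{2k_3}t-m)\wt{u}_i$, whose Fourier transform is controlled by $\norm{u_i}_{F_{k_i}}$ through the $X_{k_i}$-structure. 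This is the standard device for converting short-time norms into estimates on $J$.

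Next I would rewrite the time integral over a single piece as a spacetime integral and recognize it, after setting $f_i = |\ft[\gamma(2^{2k_3}t-m)\wt{u}_i]|$ and using $\zeta_i = \tau_i - \mu(n_i)$, as exactly the quantity $J(f_1,f_2,f_3)$ from Section \ref{sec:L2 block estimate}; the phase $G(n_1,n_2)$ appears because $\int e^{itG}\,dt$ restricts the sum to the resonance hyperplane. After decomposing each $f_i$ into modulation pieces $f_{k_i,j_i}$, the core of the argument is to apply the appropriate branch of Lemma \ref{lem:bi-L2}: for part (a), $|k_{max}-k_{min}|\le 5$ so I use \eqref{eq:bi-block estimate-a1.1}--\eqref{eq:bi-block estimate-a1.2}, while for part (b), $k_1 \le k_3-10$ puts us in the separated-frequency regime \eqref{eq:bi-block estimate-b1.1}--\eqref{eq:bi-block estimate-b1.4}. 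I then sum over $j_1,j_2,j_3$ against the $2^{j_i/2}$ weights built into the $X_{k_i}$ norm; the modulation gains $2^{-(j_{med}+j_{max})/2}$ absorb these weights and leave the claimed powers of $2^{k_3}$, $2^{k_1}$.

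The key bookkeeping point is tracking the powers of $2^{k_3}$ that arise from summing over the $O(2^{2k_3})$ time pieces against the cutoff normalization. On a single interval of length $2^{-2k_3}$ the cutoff costs are benign, but multiplying by the number of intervals must be balanced against the modulation lower bound $j_i \gtrsim 2k_3$ forced by the short-time localization and against the resonance gain $|G|\gtrsim |n_1n_2(n_1+n_2)|(n_1^2+n_2^2+(n_1+n_2)^2)$ from \eqref{eq:support property}. In the balanced case (a) all three frequencies are comparable, so $|G|\sim 2^{5k_3}$ and the large modulation gain yields the extra $2^{-3k_3/2}$; in case (b) the high-high interaction produces a low output, $|G|\sim 2^{4k_3}2^{k_1}$ roughly, and careful accounting delivers the sharper $2^{-k_3}2^{-k_1/2}$.

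The main obstacle I anticipate is the careful bookkeeping in part (b), where the frequency separation $k_1 \le k_3 - 10$ means one must choose the right sub-case of Lemma \ref{lem:bi-L2} according to whether the low-frequency factor carries the maximal modulation, i.e. whether $(k_i,j_i)=(k_{min},j_{max})$. The estimates \eqref{eq:bi-block estimate-b1.1}--\eqref{eq:bi-block estimate-b1.4} differ precisely on this dichotomy and on the threshold $j_{med}$ versus $3k_{max}+k_{min}$ or $4k_{max}$, so extracting the uniform gain $2^{-k_3}2^{-k_1/2}$ requires verifying that each branch produces at least this much decay after the $j$-summations. I expect the worst case to be the one where the maximal modulation sits on the low-frequency factor, as in the analogous nonlinear estimate of Lemma \ref{lem:bi-nonres1}, and I would treat that branch first and compare the others against it.
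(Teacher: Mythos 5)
Your proposal follows essentially the same route as the paper's proof: extend to nearly-optimal representatives, localize time with a partition of unity on the $2^{-2k_3}$ scale (the paper uses $\sum_m\gamma^3\equiv 1$ so each of the three factors absorbs one cutoff), pass to $J(f_1,f_2,f_3)$ with modulation decomposition, and apply the relevant branches of Lemma \ref{lem:bi-L2} with the resonance lower bounds $j_{max}\ge 5k_3$ in case (a) and $j_{max}\ge 4k_3+k_1$ in case (b). Your identification of the worst branch in (b) (maximal modulation on the low-frequency factor, $j_{med}\le 3k_{max}+k_{min}$) matches the paper's accounting, so the argument is correct as outlined.
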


\begin{proof}
We fix extensions $\wt{u}_i \in F_{k_i}$ so that $\norm{\wt{u}_{i}}_{F_{k_i}} \le 2 \norm{u_i}_{F_{k_i}(T)}$, $i=1,2,3$. Let $\gamma : \R \to [0,1]$ be a smooth partition of unity function with $\sum_{m \in \Z}\gamma^3(x-m) \equiv 1$, $x \in \R$. Then, we obtain 
\begin{equation}\label{eq:energy2-1.3}
\begin{aligned}
&\left| \sum_{n_3,\overline{\N}_{2,n_3}} \int_0^T  \wh{u}_1(n_1)\wh{u}_2(n_2)\wh{u}_3(n_3) \; dt\right|\\
&\lesssim \sum_{|m| \lesssim 2^{2k_3}} \Big| \sum_{n_3,\overline{\N}_{2,n_3}} \int_{\R}\left(\gamma(2^{2k_3}t-m)\mathbf{1}_{[0,T]}(t)\wh{\wt{u}}_1(n_1)\right) \cdot \left(\gamma(2^{2k_3}t-m)\wh{\wt{u}}_2(n_2)\right) \cdot \left(\gamma(2^{2k_3}t-m)\wh{\wt{u}}_3(n_3)\right) \;dt \Big|
\end{aligned}
\end{equation}
Set
\[A = \set{m:\gamma(2^{2k_3}t-m)\mathbf{1}_{[0,T]}(t) \mbox{ non-zero and } \neq \gamma(2^{2k_3}t-m) }.\]
Then, the summation over $m \lesssim 2^{2k_3}$ in the right-hand side of \eqref{eq:energy2-1.3} is divided into $A$ and $A^c$. Since $|A| \le 4$, we can easily handle (see \cite{Guo2012} for the details) the right-hand side of \eqref{eq:energy2-1.3} on $B$ by showing
\[\sup_{j \in \Z_+}2^{j/2}\norm{\eta_j(\tau-\mu(n)) \cdot \ft[\mathbf{1}_{[0,1]}(t)\gamma(2^{2k_3}t-m)\wt{u}_1]}_{L_{\tau}^2\ell_n^2} \lesssim \norm{\gamma(2^{2k_3}t-m)\wt{u}_1}_{X_{k_1}}.\]
Hence, we only handle the summation on $A^c$ (for $m \in A^c$, $\gamma(2^{2k_3}t-m)\mathbf{1}_{[0,T]}(t)\wh{\wt{u}}_1(n_1) = \gamma(2^{2k_3}t-m)\wh{\wt{u}}_1(n_1)$). Let $f_{k_i} = \ft[\gamma(2^{2k_3}t-m)\wh{\wt{u}}_i(n_i)]$ and $f_{k_i,j_i} = \eta_{j_i}(\tau - \mu(n))f_{k_i}$, $i=1,2,3$. By Parseval's identity and \eqref{eq:prop1}, the right-hand side of \eqref{eq:energy2-1.3} is dominated by
\[\sup_{m \in B^c} 2^{2k_3} \sum_{j_1,j_2,j_3 \ge 2k_3} |J(f_{k_1,j_1},f_{k_2,j_2},f_{k_3,j_3})|.\]

(a) By the support property \eqref{eq:support property}, we know $j_{max} \ge 5k_3$. Then, since the case when $j_{med} \le 3k_3$ is the worst case, we use \eqref{eq:bi-block estimate-a1.1} to estimate $|J(f_{k_1,j_1},f_{k_2,j_2},f_{k_3,j_3})|$, then
\begin{align*}
\eqref{eq:energy2-1.3} &\lesssim 2^{2k_3} \sum_{\substack{j_1,j_2,j_3 \ge 2k_3\\j_{med} \le 3k_3}}2^{(j_1+j_2+j_3)/2}2^{-(j_{max}+j_{med})/2}\prod_{i=1}^{3} \norm{f_{k_i,j_i}}_{L_{\tau}^2\ell_n^2}\\
&\lesssim 2^{2k_3} \sum_{j_1,j_2,j_3 \ge 2k_3}2^{(j_1+j_2+j_3)/2}2^{-7k_3/2}\prod_{i=1}^{3} \norm{f_{k_i,j_i}}_{L_{\tau}^2\ell_n^2}\\
&\lesssim 2^{-3k_3/2}\norm{u_1}_{F_{k_1}(T)}\norm{u_2}_{F_{k_2}(T)}\norm{u_3}_{F_{k_3}(T)}.
\end{align*}

(b) Since the case when $j_{med} \le 3k_3 + k_1$ is also the worst case, we use \eqref{eq:bi-block estimate-b1.1} and argument in (a) with $j_{max} \ge 4k_3 + k_1$, then 
\begin{align*}
\eqref{eq:energy2-1.3} &\lesssim 2^{2k_3} \sum_{\substack{j_1,j_2,j_3 \ge 2k_4\\j_{med} \le 3k_3 + k_1}}2^{(j_1+j_2+j_3)/2}2^{-(j_{max}+j_{med})/2}\prod_{i=1}^{3} \norm{f_{k_i,j_i}}_{L_{\tau}^2\ell_n^2}\\
&\lesssim 2^{2k_3} \sum_{j_1,j_2,j_3 \ge 2k_3}2^{(j_1+j_2+j_3)/2}2^{-3k_3}2^{-k_1/2}\prod_{i=1}^{3} \norm{f_{k_i,j_i}}_{L_{\tau}^2\ell_n^2}\\
&\lesssim 2^{-k_3}2^{-k_1/2}\norm{u_1}_{F_{k_1}(T)}\norm{u_2}_{F_{k_2}(T)}\norm{u_3}_{F_{k_3}(T)}.
\end{align*}
Therefore, we finish the proof of Lemma \ref{lem:energy2-1}.
\end{proof}
In order to estimate the cubic terms, we state the following lemma: 
\begin{lemma}\label{lem:energy-cubic}
Let $T \in (0,1]$, $k_1,k_2,k_3,k_4 \in \Z_+$, and $v_i \in F_{k_i}(T)$, $i=1,2,3,4$. We further assume $k_1 \le k_2 \le k_3 \le k_4$ with $k_4 \ge 10$. Then

(a) For $|k_1 - k_4| \le 5 $, we have
\begin{equation}\label{eq:energy-cubic1}
\left| \sum_{n_4,\overline{\N}_{3,n_4}} \int_0^T  \wh{v}_1(n_1)\wh{v}_2(n_2)\wh{v}_3(n_3)\wh{v}_4(n_4) \; dt\right| \lesssim 2^{k_4/2}\prod_{i=1}^{4}\norm{v_i}_{F_{k_i}(T)}.
\end{equation}

(b) For $|k_2 - k_4| \le 5 $ and $k_1 \le k_4 - 10$, we have
\begin{equation}\label{eq:energy-cubic2}
\left| \sum_{n_4,\overline{\N}_{3,n_4}} \int_0^T \wh{v}_1(n_1)\wh{v}_2(n_2)\wh{v}_3(n_3)\wh{v}_4(n_4) \; dt\right| \lesssim 2^{-k_4}2^{k_1/2}\prod_{i=1}^{4}\norm{v_i}_{F_{k_i}(T)}.
\end{equation}

(c) For $|k_3 - k_4| \le 5$, $k_2 \le k_4 -10$ and $|k_1-k_2| \le 5$, we have
\begin{equation}\label{eq:energy-cubic3}
\left| \sum_{n_4,\overline{\N}_{3,n_4}} \int_0^T \wh{v}_1(n_1)\wh{v}_2(n_2)\wh{v}_3(n_3)\wh{v}_4(n_4) \; dt\right| \lesssim 2^{-k_4}2^{k_1/2}\prod_{i=1}^{4}\norm{v_i}_{F_{k_i}(T)}.
\end{equation}

(d) For $|k_3 - k_4| \le 5$, $k_2 \le k_4 -10$ and $k_1 \le k_2 - 10$, we have
\begin{equation}\label{eq:energy-cubic4}
\left| \sum_{n_4,\overline{\N}_{3,n_4}} \int_0^T\wh{v}_1(n_1)\wh{v}_2(n_2)\wh{v}_3(n_3)\wh{v}_4(n_4) \; dt\right| \lesssim 2^{-k_4}\prod_{i=1}^{4}\norm{v_i}_{F_{k_i}(T)}.
\end{equation}
\end{lemma}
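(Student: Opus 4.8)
The plan is to mirror the architecture of the proof of Lemma~\ref{lem:energy2-1}, now applied to the quartic form produced by the cubic nonlinearity. First I would fix extensions $\wt v_i \in F_{k_i}$ with $\norm{\wt v_i}_{F_{k_i}} \le 2\norm{v_i}_{F_{k_i}(T)}$ and insert a smooth partition of unity $\sum_{m \in \Z}\gamma^4(2^{2k_4}t-m) \equiv 1$ at the time scale $2^{-2k_4}$ set by the highest frequency. Exactly as in Lemma~\ref{lem:energy2-1}, the indices $m$ for which $\mathbf 1_{[0,T]}$ genuinely cuts $\gamma(2^{2k_4}t-m)$ form a set $A$ of cardinality at most $4$ and are handled directly, while for $m \in A^c$ one replaces $\mathbf 1_{[0,T]}$ by $1$. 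Writing $f_{k_i} = \ft[\gamma(2^{2k_4}t-m)\wt v_i]$ and $f_{k_i,j_i} = \eta_{j_i}(\tau-\mu(n))f_{k_i}$, Parseval together with \eqref{eq:prop1} reduces the bound to controlling, uniformly in $m$,
\[
2^{2k_4}\sum_{j_1,\dots,j_4 \ge 2k_4}\Big|\sum_{n_4,\overline{\N}_{3,n_4}}\int_{\overline\tau\in\Gamma_4(\R)}\prod_{i=1}^4 f_{k_i,j_i}(\tau_i,n_i)\Big|.
\]
On the support one has $\zeta_1+\zeta_2+\zeta_3+\zeta_4 = -\sum_{i}\mu(n_i)$, and since $n_1+n_2+n_3+n_4 = 0$ the right-hand side equals $H_3(n_1,n_2,n_3)$ up to lower-order terms; hence $2^{j_{max}}\gtrsim |H_3|$, and the factorization \eqref{eq:resonance function2} quantifies the modulation gain through the pairwise sums of the frequencies.

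Since the $L^2$-block estimates of Lemma~\ref{lem:bi-L2} and Corollary~\ref{cor:bi-L2} are trilinear, the one genuinely new step is to split the quartic form into two bilinear convolutions and pair them by Cauchy--Schwarz. For a grouping $\{a,b\},\{c,d\}$ of $\{1,2,3,4\}$ adapted to the frequency pattern, I would write the inner sum as
\[
\int_{\R}\sum_{p}(f_{k_a,j_a}\ast f_{k_b,j_b})(\sigma,p)\,(f_{k_c,j_c}\ast f_{k_d,j_d})(-\sigma,-p)\,d\sigma,
\]
where $\ast$ is convolution in $(\tau,n)$ and $p = n_a+n_b = -(n_c+n_d)$. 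Decomposing the common output region into blocks $D_{k',j'}$ and applying Cauchy--Schwarz in $(\sigma,p)$ bounds this by $\sum_{k',j'}\norm{\mathbf 1_{D_{k',j'}}(f_{k_a,j_a}\ast f_{k_b,j_b})}_{L^2}\norm{\mathbf 1_{D_{k',j'}}(f_{k_c,j_c}\ast f_{k_d,j_d})}_{L^2}$, and each factor is precisely of the form estimated by Corollary~\ref{cor:bi-L2}. Thus the whole expression reduces to two applications of the $L^2$-block estimates, with the output modulation constrained by $2^{j'}\lesssim 2^{j_{max}}$.

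For the case analysis I would choose the grouping that best exploits the resonance. In (a) all four frequencies are comparable, so I pair $\{1,2\},\{3,4\}$, use \eqref{eq:bi-block estimate-a2.1}, and the worst subcase $j_{med}\le 3k_4$ together with $j_{max}\ge 5k_4$ yields the stated $2^{k_4/2}$. In (b) three frequencies are high and $k_1$ is low; pairing $k_1$ with a high frequency keeps both outputs high, and \eqref{eq:bi-block estimate-b2.1}--\eqref{eq:bi-block estimate-b2.4} with $j_{max}\ge 4k_4+k_1$ deliver $2^{-k_4}2^{k_1/2}$. In (c) and (d) the two highest frequencies are comparable and nearly opposite while $n_1+n_2$ is low, so the pairing $\{1,2\},\{3,4\}$ sends both convolutions to low output; the decisive factor is the high-high $\Rightarrow$ low convolution $f_{k_3,j_3}\ast f_{k_4,j_4}$, to which case (b) of Lemma~\ref{lem:bi-L2} applies and supplies the decay $2^{-k_4}$, the extra $2^{k_1/2}$ in (c) and its absence in (d) reflecting the relative sizes of $k_1$ and $k_2$.

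The hard part will be the high-high-low-low regime of (c) and (d), where each pairing lands at low output frequency and the resonance function degenerates as $n_1+n_2 \to 0$. There one must check that the modulation floor $j_i \ge 2k_4$ coming from the time localization, combined with the lower bound $2^{j_{max}}\gtrsim |H_3|$, is enough to absorb the $2^{2k_4}$ prefactor and to close the summation over $j_1,\dots,j_4$ with the stated powers; tracking which of \eqref{eq:bi-block estimate-b2.1}--\eqref{eq:bi-block estimate-b2.4} is active, according to whether the extremal modulation sits on the low output or on a high input, is where the bookkeeping is most delicate. The boundary contribution from $A$ and the fully comparable case (a) then follow the routine pattern already established in Lemma~\ref{lem:energy2-1} and in \cite{Kwak2015, Guo2012}.
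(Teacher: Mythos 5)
Your overall architecture is the right one and matches both the proof of Lemma \ref{lem:energy2-1} in this paper and the proof in \cite{Kwak2015}, to which the paper simply defers for this lemma: fix extensions, localize in time on intervals of length $2^{-2k_4}$ (producing the prefactor $2^{2k_4}$ and the modulation floor $j_i\gtrsim 2k_4$), invoke $2^{j_{max}}\gtrsim |H_3|$, and reduce the quartic form to the bilinear $L^2$-block estimates. For comparison, the paper's own quintic estimate \eqref{eq:energy2-2.11} shows the author's preferred reduction: put the two factors carrying the largest modulations in $L^2$ and the remaining ones in $L^\infty$ via $\norm{\ft^{-1}(f_{k,j})}_{L^\infty_{t,x}}\lesssim 2^{(k+j)/2}\norm{f_{k,j}}_{L^2}$, which gives $|J_4|\lesssim 2^{(k_a+k_b)/2}\,2^{-(j_{max}+j_{sub})/2}\prod_i 2^{j_i/2}\norm{f_{k_i,j_i}}_{L^2}$ with $a,b$ the two indices kept in $L^\infty$. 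Your Cauchy--Schwarz splitting into two convolutions is an equivalent route, but note it forces an additional sum over the output block $(k',j')$ (the output modulation $\zeta_a+\zeta_b+G(n_a,n_b)$ is not localized to one dyadic shell, so a logarithmic loss must be absorbed) and a reflection of one factor, handled by $|J(f_1,f_2,f_3)|=|J(\overline{f}_1,f_2,f_3)|$.

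The genuine gap is in the resonance bookkeeping, which is the quantitative heart of the lemma. In case (a) your claim $j_{max}\ge 5k_4$ is false: on $\overline{\N}_{3,n_4}$ only $(n_1+n_2)(n_1+n_3)(n_2+n_3)\neq 0$ is guaranteed, and two of the three factors can be $O(1)$ simultaneously (e.g.\ $n_1=N$, $n_2=-N+1$, $n_3=-N-1$, $n_4=N$), so only $|H_3|\gtrsim 2^{3k_4}$ and hence $j_{max}\gtrsim 3k_4$ hold in general. Your derivation of the bound $2^{k_4/2}$ therefore rests on a false premise; the correct count $2^{2k_4}\cdot 2^{k_4}\cdot 2^{-(3k_4+2k_4)/2}=2^{k_4/2}$ still closes, but it is not the computation you wrote, and if your accounting genuinely needed $5k_4$ it would lose a factor $2^{k_4}$. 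Conversely, in case (b) all three factors of $H_3$ are $\sim 2^{k_4}$ (exactly one frequency is low), so the full $j_{max}\gtrsim 5k_4$ is available; your weaker $4k_4+k_1$ looks like the \emph{quadratic} resonance bound for $G$ transplanted into the cubic setting, and whether it suffices depends on which block estimate absorbs the slack. Finally, cases (c) and (d) --- where the $\{3,4\}$ pairing lands at low output frequency, the operative bound is $|H_3|\gtrsim |n_1+n_2|\,2^{4k_4}$, and the presence of $2^{k_1/2}$ in (c) versus its absence in (d) must be read off from \eqref{eq:bi-block estimate-b2.1}--\eqref{eq:bi-block estimate-b2.4} according to whether $j_{max}$ sits on the low output or a high input --- are exactly the cases you explicitly defer; as written, the proposal identifies where the difficulty lies but does not resolve it.
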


\begin{proof}
See \cite{Kwak2015} for the proof.
\end{proof}

The next lemma is a kind of commutator estimate which will be helpful to handle bad terms $\int_0^T E_{2}$ and $\int_0^T E_{3}$ in the original energy.

\begin{lemma}\label{lem:commutator2}
Let $T \in (0,1]$, $k,k_1\in \Z_+$ satisfying $k_1 \le k -10$, $v \in F_{k_1}(T)$ and $u \in F^0(T)$. Then, we have
\begin{equation}\label{eq:commutator2-1}
\begin{aligned}
\Big|\sum_{n,\overline{\N}_{2,n}}&\int_0^T \chi_k(n)n[\chi_{k_1}(n_1)\wh{v}(n_1)n_2^2\wh{u}(n_2)]\chi_k(n)\wh{u}(n) \;dt\\
&+ \frac12\sum_{n,\overline{\N}_{2,n}}\int_0^T \chi_{k_1}(n_1)n_1\wh{v}(n_1)\chi_k(n_2)n_2\wh{u}(n_2)\chi_k(n)n\wh{u}(n) \;dt\\
&- \sum_{n,\overline{\N}_{2,n}}\int_0^T \chi_{k_1}(n_1)n_1\wh{v}(n_1)\psi_k(n_2)n_2\wh{u}(n_2)\chi_k(n)n\wh{u}(n) \;dt \Big|\\
&\lesssim 2^{3k_1/2} \norm{P_{k_1}v}_{F_{k_1}(T)}\sum_{|k-k'|\le 5} \norm{P_{k'}u}_{F_{k'}(T)}^2,
\end{aligned}
\end{equation}
and
\begin{equation}\label{eq:commutator2-2}
\begin{aligned}
\Big|\sum_{n,\overline{\N}_{2,n}}&\int_0^T \chi_k(n)[\chi_{k_1}(n_1)n_1\wh{v}(n_1)n_2^2\wh{u}(n_2)]\chi_k(n)\wh{u}(n)\; dt\\
&+ \sum_{n,\overline{\N}_{2,n}}\int_0^T \chi_{k_1}(n_1)n_1\wh{v}(n_1)\chi_k(n_2)n_2\wh{u}(n_2)\chi_k(n)n\wh{u}(n)\;dt \Big|\\
&\lesssim 2^{3k_1/2} \norm{P_{k_1}v}_{F_{k_1}(T)}\sum_{|k-k'|\le 5} \norm{P_{k'}u}_{F_{k'}(T)}^2,
\end{aligned}
\end{equation}
\end{lemma}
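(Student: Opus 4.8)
The plan is to treat both high-frequency slots as carrying the \emph{same} full function $u$, absorbing every cutoff ($\chi_k$, $\psi_k$) and frequency weight into a single symbol $\Phi(n_1,n_2,n)$ acting on $\wh{v}(n_1)\wh{u}(n_2)\wh{u}(n)$ summed over $\overline{\N}_{2,n}$. Since the summation set (with $n_1+n_2+n=0$) is invariant under exchanging the two $u$-slots $n\leftrightarrow n_2$, I may replace $\Phi$ by its symmetrization $\tfrac12\bigl(\Phi(n_1,n_2,n)+\Phi(n_1,n,n_2)\bigr)$ at no cost; this is an exact relabeling valid for arbitrary $u$. The whole point is that each individual term carries $2^{2k}$ or $2^{3k}$ derivatives on the high modes, far more than Lemma \ref{lem:energy2-1} can absorb, but the symmetrized combinations collapse under Taylor expansion in the small parameter $n_1=-(n+n_2)$, using the regularity bounds \eqref{eq:regularity}, which give $|\chi_k'|\lesssim 2^{-k}$, $|\chi_k''|\lesssim 2^{-2k}$, and hence $|\psi_k|=|n\chi_k'|\lesssim 1$ on $I_k$. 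Throughout, since a bounded symbol can be pulled out of the trilinear form with its $L^\infty$ norm (the form $J$ only sees $|f_i|$), coupling of $n$ and $n_2$ in the symbol causes no trouble.

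For \eqref{eq:commutator2-2} I would first symmetrize the $n_2^2$-term: its symbol $\chi_k(n)^2 n_2^2$ combined with the already symmetric cross term $\chi_k(n)\chi_k(n_2)n n_2$ produces exactly $\tfrac12\bigl(\chi_k(n)n_2+\chi_k(n_2)n\bigr)^2$ on the $u$-slots (times $\chi_{k_1}(n_1)n_1$ on the $v$-slot). Writing $n_2=-n-n_1$ and using that $\chi_k$ is even, a mean-value (first-order Taylor) step gives $\chi_k(n)n_2+\chi_k(n_2)n=n_1 R(n,n_1)$ with $\norm{R}_{L^\infty}\lesssim 1$ (here $|n\chi_k'|\lesssim 1$ is what keeps $R$ bounded). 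Hence the full symbol is $\tfrac12\chi_{k_1}(n_1)n_1^3 R^2$, with $L^\infty$ norm $\lesssim 2^{3k_1}$. Pulling this out and applying Lemma \ref{lem:energy2-1}(b) (with $\wh{v}$ in the lowest slot and the two copies of $u$ in the comparable high slots) yields $2^{3k_1}\cdot 2^{-k}2^{-k_1/2}\norm{P_{k_1}v}_{F_{k_1}(T)}\norm{P_{k'}u}_{F_{k'}(T)}^2$, which is $\lesssim 2^{3k_1/2}$ since $k\ge k_1+10$.

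The estimate \eqref{eq:commutator2-1} is the genuinely second-order case and contains the main difficulty. After symmetrizing the two correction terms as well, the combined symbol on the $u$-slots becomes $\tfrac12\,n n_2\,B$, where
\[B=\chi_k(n)^2 n_2+\chi_k(n_2)^2 n-(n+n_2)\chi_k(n)\chi_k(n_2)+(n+n_2)\bigl(\psi_k(n_2)\chi_k(n)+\psi_k(n)\chi_k(n_2)\bigr),\]
and the low slot has become simply $\chi_{k_1}(n_1)\wh{v}(n_1)$ (all explicit $n_1$ factors of the correction terms absorbed via $n_1=-(n+n_2)$). The crux is to show $|B|\lesssim 2^{2k_1-k}$. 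I would expand $B$ in $n_1$ about the symmetric point: the $O(1)$ term vanishes by evenness of $\chi_k$ (the two leading pieces cancel, as one checks directly at $n_1=0$), and the $O(n_1)$ term is killed precisely by the $\psi_k=n\chi_k'$ corrections together with the coefficient $\tfrac12$ in the cross term. This cancellation of the zeroth and first Taylor orders is exactly why $\psi_k$ was introduced, and verifying it is the heart of the lemma. What survives is the second-order remainder, of size $2^k\cdot(n_1/2^k)^2\sim 2^{2k_1-k}$ by \eqref{eq:regularity}.

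Consequently the full symbol of \eqref{eq:commutator2-1} is bounded in $L^\infty$ by $2^{2k}\cdot 2^{2k_1-k}=2^{k+2k_1}$; pulling it out and invoking Lemma \ref{lem:energy2-1}(b) again gives $2^{k+2k_1}\cdot 2^{-k}2^{-k_1/2}=2^{3k_1/2}$, as claimed. In both estimates the sum $\sum_{|k-k'|\le 5}\norm{P_{k'}u}_{F_{k'}(T)}^2$ appears because the slot carrying $\wh{u}(n_2)$ is not explicitly projected and is only confined to $\bigcup_{|k'-k|\le 5}I_{k'}$ by the relation $n_1+n_2+n=0$; I would decompose that slot dyadically and sum the finitely many blocks via Cauchy--Schwarz. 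The main obstacle, to emphasize, is the bookkeeping of the second-order Taylor cancellation in $B$, which must be carried out carefully to confirm that the subleading terms genuinely combine into a remainder gaining the full factor $(n_1/2^k)^2$ rather than merely $(n_1/2^k)$.
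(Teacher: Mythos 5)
Your proposal is correct and follows essentially the same route as the paper: exploit the $n\leftrightarrow n_2$ symmetry of the two $u$-slots, identify the combined symbol as a first-/second-order Taylor remainder of $\chi_k$ in the small increment $n_1$ (the role of $\psi_k=n\chi_k'$ and of the coefficient $\tfrac12$ being exactly to kill the first-order term), bound it using \eqref{eq:regularity}, and close with Lemma \ref{lem:energy2-1}(b). The only cosmetic difference is that the paper symmetrizes just the cross term and writes the symbol one-sidedly as $\bigl(\chi_k(n)-\chi_k(n_2)-n_1\chi_k'(n_2)\bigr)n_2^2/n_1^2$ times $\chi_{k_1}(n_1)n_1^2$, whereas you symmetrize all three terms and verify the vanishing of the zeroth and first Taylor coefficients of $B$ at $n_1=0$; the cancellation mechanism and the final numerology $2^{2k_1}\cdot 2^{k}\cdot 2^{-k}2^{-k_1/2}=2^{3k_1/2}$ are identical.
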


\begin{proof}
We first consider \eqref{eq:commutator2-1}. From $n_1+n_2+n= 0$ and the symmetry of $n_2,n$, we have
\begin{align*}
\mbox{LHS of }\eqref{eq:commutator2-1} &= \Big|\sum_{n,\overline{\N}_{2,n}}\int_0^T[\chi_k(n)n_2^2 - \chi_k(n_2)n_2^2 - n_1n_2\psi_k(n_2)]\\
&\hspace{11em}\times\chi_{k_1}(n_1)\wh{v}(n_1)\wh{u}(n_2)\chi_k(n)n\wh{u}(n)\;dt \Big|\\
&= \Big|\sum_{n,\overline{\N}_{2,n}}\int_0^T\left[\frac{\chi_k(n) - \chi_k(n_2) - n_1\chi_k'(n_2)}{n_1^2}\cdot n_2^2\right]\\
&\hspace{11em}\times \chi_{k_1}(n_1)n_1^2\wh{v}(n_1)\wh{u}(n_2)\chi_k(n)n\wh{u}(n)\;dt \Big|.
\end{align*}
Since both $\chi_k$ and $\chi_k'$ are even functions, $-n_2 = n + n_1$, $|n|\sim|n_2|$ and $\chi_k''(n) = O(\chi_k(n)/n^2)$ due to \eqref{eq:regularity}, we know from the Taylor's theorem that 
\[\left|\frac{\chi_k(n) - \chi_k(n_2) - n_1\chi_k'(n_2)}{n_1^2}\cdot n_2^2\right| \lesssim 1.\]
Hence by the same way as in the proof of Lemma \ref{lem:energy2-1} (b), we have
\[\mbox{LHS of }\eqref{eq:commutator2-1} \lesssim 2^{3k_1/2}\norm{P_{k_1}v}_{F_{k_1}(T)}\sum_{|k-k'|\le 5} \norm{P_{k'}u}_{F_{k'}(T)}^2.\]

Next, we consider \eqref{eq:commutator2-2}. Since $n = -n_2 - n_1$, we have 
\[\begin{aligned}
&\sum_{n,\overline{\N}_{2,n}}\int_0^T n_1\chi_{k_1}(n_1)\wh{v}(n_1)\chi_k(n_2)n_2\wh{u}(n_2)\chi_k(n)n\wh{u}(n)\;dt\\
=&-\sum_{n,\overline{\N}_{2,n}}\int_0^T n_1^2\chi_{k_1}(n_1)\wh{v}(n_1)\chi_k(n_2)n_2\wh{u}(n_2)\chi_k(n)\wh{u}(n)\;dt\\
&-\sum_{n,\overline{\N}_{2,n}}\int_0^T n_1\chi_{k_1}(n_1)\wh{v}(n_1)\chi_k(n_2)n_2^2\wh{u}(n_2)\chi_k(n)\wh{u}(n)\;dt,
\end{aligned}\]
and similarly as before, we have 
\begin{align*}
&\sum_{n,\overline{\N}_{2,n}}\int_0^T \chi_k(n)[\chi_{k_1}(n_1)n_1\wh{v}(n_1)n_2^2\wh{u}(n_2)]\chi_k(n)\wh{u}(n)\\
&- \sum_{n,\overline{\N}_{2,n}}\int_0^T \chi_{k_1}(n_1)n_1\wh{v}(n_1)\chi_k(n_2)n_2^2\wh{u}(n_2)]\chi_k(n)\wh{u}(n)\\
&= \sum_{n,\overline{\N}_{2,n}}\int_0^T\left[\frac{\chi_k(n) - \chi_k(n_2)}{n_1}\cdot n_2\right]\\
&\hspace{11em}\times \chi_{k_1}(n_1)n_1^2\wh{v}(n_1)n_2\wh{u}(n_2)\chi_k(n)\wh{u}(n)\;dt,
\end{align*}
with
\[\left|\frac{\chi_k(n) - \chi_k(n_2)}{n_1}\cdot n_2\right| \lesssim 1.\]
Again we use \eqref{eq:energy2-1.2} so that
\[\mbox{LHS of }\eqref{eq:commutator2-2} \lesssim 2^{3k_1/2}\norm{P_{k_1}v}_{F_{k_1}(T)}\sum_{|k-k'|\le 5} \norm{P_{k'}u}_{F_{k'}(T)}^2,\]
which completes the proof of Lemma \ref{lem:commutator2}.
\end{proof}

Using above lemmas, we show the energy estimate. 
\begin{proposition}\label{prop:energy2-2}
Let $s \ge 2$ and $T \in (0,1]$, Then, for the solution $u \in C([-T,T];H^{\infty}(\T))$ to \eqref{eq:5kdv4}, we have
\[E_{T}^s(u) \lesssim (1+ \norm{u_0}_{H^s})\norm{u_0}_{H^s}^2 + \left(\norm{u}_{F^{\frac32+}(T)}+\norm{u}_{F^{2}(T)}^2+\norm{u}_{F^{\frac12+}(T)}^3\right)\norm{u}_{F^{s}(T)}^2.\]
\end{proposition}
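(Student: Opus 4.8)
The plan is to control $E_k(u)(t_k)$ for each fixed $k\ge 1$ and $t_k\in[-T,T]$ through the identity $E_k(u)(t_k)=E_k(u)(0)+\int_0^{t_k}\pt E_k(u)(t)\,dt$, and then to multiply by $2^{2sk}$, sum in $k$, and take the supremum in $t_k$. For the data term, $\|P_0u(0)\|_{L^2}^2+\sum_{k\ge1}2^{2sk}\|P_ku(0)\|_{L^2}^2\lesssim\|u_0\|_{H^s}^2$, while the two cubic correction pieces of $E_k(u)(0)$ in \eqref{eq:new energy2-1} are cubic in $u_0$ and, by $H^s\hookrightarrow L^\infty$ ($s>1/2$) exactly as in Lemma \ref{lem:comparable energy2-1}, are bounded by $\|u_0\|_{H^s}^3$; together these produce the factor $(1+\|u_0\|_{H^s})\|u_0\|_{H^s}^2$. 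Everything else reduces to the time integral of $\pt E_k(u)$.

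First I would use \eqref{eq:5kdv4} to record $\pt\|P_ku\|_{L^2}^2=E_1+E_2+E_3$, observing that the resonant cubic term $\wh N_{1,1}$ drops out since $\mbox{Re}[in|\wh u(n)|^4]=0$. The quartic term $E_1$, coming from the non-resonant cubic nonlinearity, carries the explicit weight $n\chi_k(n)^2\sim 2^k$ and is estimated directly by Lemma \ref{lem:energy-cubic}; after the $2^{2sk}$ summation it is absorbed into the quartic bound $\|u\|_{F^2(T)}^2\|u\|_{F^s(T)}^2$. The genuinely dangerous objects are the cubic quantities $E_2$ and $E_3$, whose weights $n\cdot n_2^2$ and $n_1\cdot n_2^2$ carry three derivatives and which, in the high--low regime ($n_1$ low, $n_2\sim n$ high), cannot be controlled by $F^s$-norms alone; these are precisely the \emph{high$\times$low$\Rightarrow$high} terms that forced the short-time modified-energy setup.

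To handle $E_2,E_3$ I would differentiate the two correction terms of \eqref{eq:new energy2-1}, substitute $\pt\wh u=i\mu(n)\wh u+(\text{nonlinear})$, and isolate the linear contribution. On $\overline{\N}_{2,n}$ the phase collapses to the resonance function, $\mu(n_1)+\mu(n_2)+\mu(n)=G(n_1,n_2)$, and because $G$ carries the factor $n_1n_2(n_1+n_2)=-n_1n_2n$, the weights $\tfrac1{n_2}\cdot\tfrac1n$ in the corrections are exactly cleared; writing $G=n_1n_2n\,Q(n_1,n_2)$ with $Q$ of degree two, the linear part of $\pt(\text{corrections})$ becomes a cubic expression with precisely the cut-off and derivative structure of the last two lines inside \eqref{eq:commutator2-1} and the second line of \eqref{eq:commutator2-2}. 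Choosing $\alpha,\beta$ to match the coefficient $20$ in $E_2,E_3$, the commutator estimates of Lemma \ref{lem:commutator2} cancel $E_2,E_3$ against these linear parts up to errors of size $2^{3k_1/2}\|P_{k_1}u\|_{F_{k_1}(T)}\sum_{|k-k'|\le5}\|P_{k'}u\|_{F_{k'}(T)}^2$; integrating, summing against $2^{2sk}$, and using Cauchy--Schwarz in the low index $k_1$ produces the $\|u\|_{F^{3/2+}(T)}\|u\|_{F^s(T)}^2$ contribution. The remaining genuinely nonlinear parts of $\pt(\text{corrections})$, where $\pt\wh u$ is replaced by the quadratic or cubic nonlinearity, are quartic and quintic; Lemma \ref{lem:energy-cubic} bounds them by $\|u\|_{F^2(T)}^2\|u\|_{F^s(T)}^2$ and $\|u\|_{F^{1/2+}(T)}^3\|u\|_{F^s(T)}^2$, respectively.

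The main obstacle is the algebraic bookkeeping of this cancellation: one must check that after the $G=n_1n_2n\,Q$ factorization the linear part of $\pt(\text{corrections})$ reproduces exactly the cut-off combinations in Lemma \ref{lem:commutator2}, so that only the $2^{3k_1/2}$-gaining commutator error survives, and one must simultaneously track the new cubic resonant configurations that arise when frequencies coincide. As noted in Remarks \ref{rem:resonant3} and \ref{rem:resonant4}, those resonant pieces have to be shown harmless, which is where the symmetry of $J$ in its arguments, $|J(f_1,f_2,f_3)|=|J(f_2,f_1,f_3)|=|J(f_3,f_2,f_1)|$, is used to reorganize and absorb them. Once the cancellation is secured, the rest is the summation of the cubic, quartic, and quintic bounds, together with the data term, which assembles the stated estimate.
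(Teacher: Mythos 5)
Your proposal follows essentially the same route as the paper: bound $E_k(u)(0)$ via Sobolev embedding to get the $(1+\norm{u_0}_{H^s})\norm{u_0}_{H^s}^2$ term, differentiate the two correction pieces so that the linear phase collapses to the resonance function and clears the $\tfrac1{n_2}\cdot\tfrac1n$ weights, cancel the dangerous \emph{high$\times$low$\Rightarrow$high} quadratic contributions against $E_2,E_3$ via the commutator estimates of Lemma \ref{lem:commutator2} (the paper takes $\alpha=-4$, $\beta=6$), and estimate the remaining quartic and quintic pieces with Lemmas \ref{lem:energy2-1}, \ref{lem:energy-cubic} and the quintic bound \eqref{eq:energy2-2.11}. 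The only inaccuracy is your attribution of the harmlessness of the exact cubic resonant configurations to the symmetry $|J(f_1,f_2,f_3)|=|J(f_2,f_1,f_3)|$; in Remark \ref{rem:resonant3} they are killed instead by observing that, after pairing $n_1+n_{2,1}=0$ and using that $\psi_k,\chi_k$ are real even functions, the summand is real, so its product with $i$ has vanishing real part.
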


\begin{proof}
For any $k\in \Z_+$ and $t \in [-T,T]$, recall the localized modified energy \eqref{eq:new energy2-1}
\[\begin{aligned}
E_k(u)(t) &= \norm{P_ku(t)}_{L_x^2}^2 + \mbox{Re}\left[\alpha \sum_{n,\overline{\N}_{2,n}}\wh{u}(n_1)\psi_k(n_2)\frac{1}{n_2}\wh{u}(n_2)\chi_k(n)\frac1n\wh{u}(n)\right]\\
&+ \mbox{Re}\left[\beta \sum_{n,\overline{\N}_{2,n}}\wh{u}(n_1)\chi_k(n_2)\frac{1}{n_2}\wh{u}(n_2)\chi_k(n)\frac1n\wh{u}(n)\right]\\
&=:I(t) + II(t) + III(t)
\end{aligned}\]
and 
\begin{align*}
\pt\norm{P_ku}_{L_x^2}^2 &= - \mbox{Re}\left[20i \sum_{n,\overline{\N}_{3,n}}\chi_k(n)n \wh{u}(n_1)\wh{u}(n_2)\wh{u}(n_3)\chi_k(n)\wh{u}(n)\right] \\
&-\mbox{Re}\left[20i \sum_{n,\overline{\N}_{2,n}}\chi_k(n)n \wh{u}(n_1)n_2^2\wh{u}(n_2)\chi_k(n)\wh{u}(n)\right] \\
&-\mbox{Re}\left[10i \sum_{n,\overline{\N}_{2,n}} \chi_k(n)nn_1\wh{u}(n_1)n_2\wh{u}(n_2)\chi_k(n)\wh{u}(n)\right]\\
&=: E_1.
\end{align*}
We differentiate $II(t)$ with respect to $t$, respectively. Then, we have
\[\begin{aligned}
\frac{d}{dt} II(t) &= \mbox{Re}\Big[\alpha i \sum_{n,\overline{\N}_{2,n}}(\mu_2(n_1)+\mu_2(n_2)+\mu_2(n))\wh{u}(n_1)\psi_k(n_2)\frac{1}{n_2}\wh{u}(n_2)\chi_k(n)\frac1n\wh{u}(n)\Big]\\
&+ \mbox{Re}\Big[\alpha \sum_{n,\overline{\N}_{2,n}}\wh{N}_2(u)(n_1)\psi_k(n_2)\frac{1}{n_2}\wh{u}(n_2)\chi_k(n)\frac1n\wh{u}(n)+\wh{u}(n_1)\psi_k(n_2)\frac{1}{n_2}\wh{N}_2(v)(n_2)\chi_k(n)\frac1n\wh{u}(n)\\
&\hspace{3em}+\wh{u}(n_1)\psi_k(n_2)\frac{1}{n_2}\wh{u}(n_2)\chi_k(n)\frac1n\wh{N}_2(u)(n)\Big]\\
&+ \mbox{Re}\Big[30\alpha i \sum_{n,\overline{\N}_{2,n}}n|\wh{u}(n_1)|^2\wh{u}(n_1)\psi_k(n_2)\frac{1}{n_2}\wh{u}(n_2)\chi_k(n)\frac1n\wh{u}(n)\\
&\hspace{3em}+\wh{u}(n_1)\psi_k(n_2)|\wh{u}(n_2)|^2\wh{u}(n_2)\chi_k(n)\frac1n\wh{u}(n)+\wh{u}(n_1)\psi_k(n_2)\frac{1}{n_2}\wh{u}(n_2)\chi_k(n)|\wh{u}(n)|^2\wh{u}(n)\Big].
\end{aligned}\]

We use the following algebraic laws
\[(a+b)^5 = a^5 + 5(a^4b + ab^4) + 10(a^3b^2 + a^2b^3) + b^5\]
and
\[(a+b)^3 = a^3+b^3+3(a^2b + ab^2)\]
so that we obtain
\[\frac{d}{dt} II(t) = E_{2,1} + E_{2,2} + E_{2,3} +E_{2,4}=: E_2,\]
where
\[\begin{aligned}
E_{2,1} = \mbox{Re}\Big[\alpha i \sum_{n,\overline{\N}_{2,n}}5(n_1^3n_2n - n_1n_2^2n_3^2)\wh{u}(n_1)\psi_k(n_2)\frac{1}{n_2}\wh{u}(n_2)\chi_k(n)\frac1n\wh{u}(n)\Big],
\end{aligned}\]

\[\begin{aligned}
E_{2,2} = \mbox{Re}\Big[\wt{c}_1\alpha i \sum_{n,\overline{\N}_{2,n}}3n_1n_2n \wh{u}(n_1)\psi_k(n_2)\frac{1}{n_2}\wh{u}(n_2)\chi_k(n)\frac1n\wh{u}(n)\Big],
\end{aligned}\]

\[\begin{aligned}
E_{2,3} &= \mbox{Re}\Big[\alpha \sum_{n,\overline{\N}_{2,n}}\Big\{\wh{N}_2(u)(n_1)\psi_k(n_2)\frac{1}{n_2}\wh{u}(n_2)\chi_k(n)\frac1n\wh{u}(n)\\
&\hspace{3em}+\wh{u}(n_1)\psi_k(n_2)\frac{1}{n_2}\wh{N}_2(u)(n_2)\chi_k(n)\frac1n\wh{u}(n)+\wh{u}(n_1)\psi_k(n_2)\frac{1}{n_2}\wh{u}(n_2)\chi_k(n)\frac1n\wh{N}_2(u)(n)\Big\}\Big]
\end{aligned}\]
and
\[\begin{aligned}
E_{2,4} &= \mbox{Re}\Big[30\alpha i \sum_{n,\overline{\N}_{2,n}}\Big\{n_1|\wh{u}(n_1)|^2\wh{u}(n_1)\psi_k(n_2)\frac{1}{n_2}\wh{u}(n_2)\chi_k(n)\frac1n\wh{u}(n)\\
&\hspace{3em}+\wh{u}(n_1)\psi_k(n_2)|\wh{u}(n_2)|^2\wh{u}(n_2)\chi_k(n)\frac1n\wh{u}(n) + \wh{u}(n_1)\psi_k(n_2)\frac{1}{n_2}\wh{u}(n_2)\chi_k(n)|\wh{u}(n)|^2\wh{u}(n)\Big\}\Big].
\end{aligned}\]
Similarly, we get
\[\frac{d}{dt} III(t) = E_{3,1} + E_{3,2} + E_{3,3} +E_{3,4}=: E_3,\]
where
\[\begin{aligned}
E_{3,1} = \mbox{Re}\Big[\beta i \sum_{n,\overline{\N}_{2,n}}5(n_1^3n_2n - n_1n_2^2n_3^2)\wh{u}(n_1)\chi_k(n_2)\frac{1}{n_2}\wh{u}(n_2)\chi_k(n)\frac1n\wh{u}(n)\Big],
\end{aligned}\]

\[\begin{aligned}
E_{3,2} = \mbox{Re}\Big[\wt{c}_1\beta i \sum_{n,\overline{\N}_{2,n}}3n_1n_2n \wh{u}(n_1)\chi_k(n_2)\frac{1}{n_2}\wh{u}(n_2)\chi_k(n)\frac1n\wh{u}(n)\Big],
\end{aligned}\]

\[\begin{aligned}
E_{3,3} &= \mbox{Re}\Big[\beta \sum_{n,\overline{\N}_{2,n}}\Big\{\wh{N}_2(u)(n_1)\chi_k(n_2)\frac{1}{n_2}\wh{u}(n_2)\chi_k(n)\frac1n\wh{u}(n)\\
&\hspace{3em}+\wh{u}(n_1)\chi_k(n_2)\frac{1}{n_2}\wh{N}_2(u)(n_2)\chi_k(n)\frac1n\wh{u}(n) +\wh{u}(n_1)\chi_k(n_2)\frac{1}{n_2}\wh{u}(n_2)\chi_k(n)\frac1n\wh{N}_2(u)(n)\Big\}\Big]
\end{aligned}\]
and
\[\begin{aligned}
E_{3,4} &= \mbox{Re}\Big[30\beta i \sum_{n,\overline{\N}_{2,n}}\Big\{n_1|\wh{u}(n_1)|^2\wh{u}(n_1)\chi_k(n_2)\frac{1}{n_2}\wh{u}(n_2)\chi_k(n)\frac1n\wh{u}(n)\\
&\hspace{3em}+\wh{u}(n_1)\chi_k(n_2)|\wh{u}(n_2)|^2\wh{u}(n_2)\chi_k(n)\frac1n\wh{u}(n) + \wh{u}(n_1)\chi_k(n_2)\frac{1}{n_2}\wh{u}(n_2)\chi_k(n)|\wh{u}(n)|^2\wh{u}(n)\Big\}\Big].
\end{aligned}\]

Fix $t_k \in [0,T]$, by integrating $\pt E_k(u)(t)$ with respect to $t$ from $0$ to $t_k$, then we have
\begin{equation}\label{eq:energy2-2.2}
E_k(u)(t_k) - E_k(u)(0) \le \left|\int_0^{t_k} E_1 + E_2 + E_3 \; dt \right|.
\end{equation}

We estimate the right-hand side of \eqref{eq:energy2-2.2} by dividing it into several cases. First, we choose $\alpha = -4$ and $\beta = 6$ to use Lemma \ref{lem:commutator2}, then for each $k\ge 1$, we have 
\[\left|\int_0^{t_k} E_1 + E_{2,1} + E_{3,1} \; dt \right| \lesssim \sum_{i=1}^{7}B_i(k),\]
where
\[\begin{aligned}
B_1(k) = \sum_{0\le k_1\le k-10}\Big|&\sum_{n,\overline{\N}_{2,n}}\int_0^{t_k} \chi_k(n)n[\chi_{k_1}(n_1)\wh{u}(n_1)n_2^2\wh{u}(n_2)]\chi_k(n)\wh{u}(n) \;dt\\
&+ \frac12\sum_{n,\overline{\N}_{2,n}}\int_0^{t_k} \chi_{k_1}(n_1)n_1\wh{u}(n_1)\chi_k(n_2)n_2\wh{u}(n_2)\chi_k(n)n\wh{u}(n) \;dt\\
&- \sum_{n,\overline{\N}_{2,n}}\int_0^{t_k} \chi_{k_1}(n_1)n_1\wh{u}(n_1)\psi_k(n_2)n_2\wh{u}(n_2)\chi_k(n)n\wh{u}(n) \;dt \Big|,
\end{aligned}\]
\[\begin{aligned}
B_2(k) =\sum_{0\le  k_1 \le k-10}\Big|&\sum_{n,\overline{\N}_{2,n}}\int_0^{t_k} \chi_k(n)[\chi_{k_1}(n_1)n_1\wh{u}(n_1)n_2^2\wh{u}(n_2)]\chi_k(n)\wh{u}(n)\; dt\\
&+ \sum_{n,\overline{\N}_{2,n}}\int_0^{t_k} \chi_{k_1}(n_1)n_1\wh{u}(n_1)\chi_k(n_2)n_2\wh{u}(n_2)\chi_k(n)n\wh{u}(n)\;dt \Big|,
\end{aligned}\]
\[B_3(k) =\sum_{\substack{k_1 \ge k -9\\k_2 \ge 0}}\Big|\sum_{n,\overline{\N}_{2,n}}\int_0^{t_k} \chi_{k_1}(n_1)\wh{u}(n_1)\chi_{k_2}(n_2)n_2^2\wh{u}(n_2)\chi_k^2(n)n\wh{u}(n)\; dt\Big|,\]
\[B_4(k) =\sum_{\substack{k_1 \ge k -9\\k_2 \ge 0}}\Big|\sum_{n,\overline{\N}_{2,n}}\int_0^{t_k} \chi_{k_1}(n_1)n_1\wh{u}(n_1)\chi_{k_2}(n_2)n_2^2\wh{u}(n_2)\chi_k^2(n)\wh{u}(n)\; dt\Big|,\]
\[B_5(k) =\sum_{|k-k_1|\le 5}\Big|\sum_{n,\overline{\N}_{2,n}}\int_0^{t_k} \chi_{k_1}(n_1)n_1\wh{u}(n_1)(\chi_k(n_2) + \psi_k(n_3))n_2\wh{u}(n_2)\chi_k(n)n\wh{u}(n) \;dt\Big|,\]
\[B_6(k) =\sum_{k_1\ge 0}\Big|\sum_{n,\overline{\N}_{2,n}}\int_0^{t_k} n_1^3n_2n\chi_{k_1}(n_1)\wh{u}(n_1)(\chi_k(n_2)+\psi_k(n_3))\frac{1}{n_2}\wh{u}(n_2)\chi_k(n)\frac1n\wh{u}(n) \;dt\Big|\]
and
\[B_7(k) =\sum_{k_1,k_2,k_3 \ge 0}\Big|\sum_{n,\overline{\N}_{3,n}}\int_0^{t_k} \chi_{k_1}(n_1)\wh{u}(n_1)\chi_{k_2}(n_2)\wh{u}(n_2)\chi_{k_3}(n_3)\wh{u}(n_3)\chi_k^2(n)n\wh{u}(n) \;dt\Big|.\]
By using Lemma \ref{lem:commutator2} and the Cauchy-Schwarz inequality, we have
\[\begin{aligned}
B_1(k) + B_2(k) &\lesssim \sum_{0\le  k_1  \le k-10} 2^{3k_1/2} \norm{P_{k_1}u}_{F_{k_1}(T)}\sum_{|k-k'|\le 3} \norm{P_{k'}u}_{F_{k'}(T)}^2\\
&\lesssim \norm{u}_{F^{\frac32+}(T)}\sum_{|k-k'|\le 5} \norm{P_{k'}u}_{F_{k'}(T)}^2.
\end{aligned}\]
For $B_3(k)$ and $B_4(k)$, we divide the summation over $k_1 \ge k -9,k_2 \ge 0$ into
\[\sum_{\substack{|k_1-k| \le 5 \\ |k_2 - k| \le 5}}+\sum_{\substack{k_2 \le k- 10 \\ |k_1-k| \le 5}}+\sum_{\substack{k_1 \ge k+10 \\ |k_1-k_2| \le 5}}.\]
We restrict $B_3(k)$ and $B_4(k)$ to the first summation, we have from \eqref{eq:energy2-1.1} and the Cauchy-Schwarz inequality that
\[\begin{aligned}
\sum_{|k-k'|\le 5} 2^{3k/2} \norm{P_{k'}u}_{F_{k'}(T)}^3 \lesssim \norm{u}_{F^{\frac32}(T)}\sum_{|k-k'|\le 5} \norm{P_{k'}u}_{F_{k'}(T)}^2.
\end{aligned}\]
For the restriction to the second and the third summations, we have from \eqref{eq:energy2-1.2} and the Cauchy-Schwarz inequality that 
\[\begin{aligned}
&\sum_{k_2 \le k- 10}2^{3k_2/2}\norm{P_{k_2}u}_{F_{k_2}(T)}\sum_{|k-k'|\le 5} \norm{P_{k'}u}_{F_{k'}(T)}^2 + 2^{k/2}\norm{P_{k}u}_{F_k(T)} \sum_{\substack{k_1 \ge k+10 \\ |k_1-k'| \le 5}}2^{k_1}\norm{P_{k'}u}_{F_{k'}(T)}^2\\
&\lesssim \norm{u}_{F^{\frac32+}(T)}\sum_{|k-k'|\le 5} \norm{P_{k'}u}_{F_{k'}(T)}^2 + 2^{-(s+\varepsilon)k}\norm{P_{k}u}_{F_k(T)} \norm{u}_{F^{\frac32+}(T)}\norm{u}_{F^s(T)},
\end{aligned}\]
for $s \ge 0$ and $0 < \varepsilon \ll 1$. Hence, we obtain
\[B_3(k) + B_4(k) \lesssim \norm{u}_{F^{\frac32+}(T)}\left(\sum_{|k-k'|\le 5}\norm{P_{k'}u}_{F_{k'}(T)}^2  +\norm{u}_{F^{s}(T)}2^{-sk-\varepsilon k}\norm{P_{k}u}_{F_k(T)}\right).\]

For $B_5(k)$, similarly as the estimate of $B_3(k) + B_4(k)$ over the first summation, we obtain
\[B_5(k) \lesssim \norm{u}_{F^{\frac32}(T)}\sum_{|k-k'|\le 5} \norm{P_{k'}u}_{F_{k'}(T)}^2.\] 
For $B_6(k)$, we use \eqref{eq:energy2-1.1}, \eqref{eq:energy2-1.1} and the Cauchy-Schwarz inequality to obtain
\[\begin{aligned}
B_6(k) &\lesssim \sum_{k_1 \le k- 10} 2^{3k_1/2}\norm{P_{k_1}u}_{F_{1,k_1}(T)} \sum_{|k-k'|\le 5}\norm{P_{k'}u}_{F_{k'}(T)}^2 + \sum_{|k-k'|\le 5}2^{3k/2}\norm{P_{k'}u}_{F_{k'}(T)}^3\\
&\lesssim \norm{u}_{F^{\frac32+}(T)}\sum_{|k-k'|\le 5}\norm{P_{k'}u}_{F_{k'}(T)}^2.
\end{aligned}\]
For $B_7(k)$, without loss of generality, we assume that $k_1 \le k_2 \le k_3$. We first consider the case when $k \sim k_3$. Then from Lemma \ref{lem:energy-cubic},  $B_7(k)$ restricted to $k \sim k_3$ is bounded by
\[\begin{aligned}
&\sum_{|k-k'|\le5} 2^{3k/2}\norm{P_{k'}u}_{F_{k'}(T)}^4 + \sum_{k_1 \le k-10}2^{k_1/2}\norm{P_{k_1}u}_{F_{k_1}(T)}\sum_{|k-k'|\le5} \norm{P_{k'}u}_{F_{k'}(T)}^3\\
&\hspace{3em}+\sum_{\substack{k_2 \le k-10\\|k_1-k_2|\le 5}}2^{k_2/2}\norm{P_{k_1}u}_{F_{k_1}(T)}^2\sum_{|k-k'|\le5}\norm{P_{k'}u}_{F_{k'}(T)}^2\\ 
&\hspace{3em}+\sum_{\substack{k_2 \le k-10\\k_1 \le k_2-10}}\norm{P_{k_1}u}_{F_{k_1}(T)}\norm{P_{k_2}u}_{F_{k_2}(T)}\sum_{|k-k'|\le5} \norm{P_{k'}u}_{F_{k'}(T)}^2\\
&\lesssim \norm{u}_{F^{\frac34}(T)}^2\sum_{|k-k'|\le5} \norm{P_{k'}u}_{F_{k'}(T)}^2.
\end{aligned}\]
Otherwise, by using Lemma \ref{lem:energy-cubic} (c) and (d), we have
\[\begin{aligned}
&2^{3k/2}\norm{P_{k}u}_{F_k(T)}\sum_{\substack{k_3 \ge k+ 10\\|k_3-k'|\le5}}2^{-k_3} \norm{P_{k'}u}_{F_{k'}(T)}^3\\
&\hspace{3em}+\sum_{|k-k'|\le 5}2^{3k/2}\norm{P_{k'}u}_{F_{k'}(T)}^2\sum_{\substack{k_3 \ge k + 10 \\ |k_2 -k_3|\le5}}2^{-k_3}\norm{P_{k_2}u}_{F_{k_2}(T)}^2\\ 
&\hspace{3em}+2^k\norm{P_{k}u}_{F_k(T)}\sum_{k_1 \le k - 10}2^{k_1/2}\norm{P_{k_1}u}_{F_{k_1}(T)}\sum_{\substack{k_3 \ge k + 10 \\ |k_2 -k_3|\le5}}2^{-k_3} \norm{P_{k_2}u}_{F_{k_2}(T)}^2\\
&\hspace{3em}+2^{3k/2}\norm{P_{k}u}_{F_k(T)}\sum_{k+10 \le k_1 \le k_3-10}\norm{P_{k_1}u}_{F_{k_1}(T)}\sum_{\substack{k_3 \ge k_1 + 10 \\ |k_2 -k_3|\le5}}2^{-k_3} \norm{P_{k_2}u}_{F_{k_2}(T)}^2\\
&\lesssim \norm{u}_{F^{\frac14+}(T)}^2\norm{u}_{F^s(T)}2^{-(s+\varepsilon)k}\norm{P_ku}_{F_k(T)} + \norm{u}_{F^{\frac14}(T)}^2\sum_{|k-k'|\le 5}\norm{P_{k'}u}_{F_{k'}(T)}^2,
\end{aligned}\]
for $s \ge 0$ and $0 < \varepsilon \ll 1$. Hence, we get the bound of $B_7(k)$ as
\[B_7(k) \lesssim \norm{u}_{F^{\frac34}(T)}^2\sum_{|k-k'|\le 5}\norm{P_{k'}u}_{F_{k'}(T)}^2 +\norm{u}_{F^{\frac14+}(T)}^2\norm{u}_{F^s(T)}2^{-(s+\varepsilon)k}\norm{P_ku}_{F_k(T)}.\]
Together with all bounds of $B_i(k)$, we obtain
\begin{equation}\label{eq:quad bound1}
\sum_{k \ge 1}2^{2sk} \sup_{t_k \in [0,T]}\left|\int_0^{t_k} E_1 + E_{2,1} + E_{3,1} \; dt \right| \lesssim \left(\norm{u}_{F^{\frac32+}(T)}+\norm{u}_{F^{\frac34}(T)}^2\right)\norm{u}_{F^s(T)}^2.
\end{equation}
Next, for $E_{2,2}$ and $E_{3,2}$ terms, since the total number of derivatives is less than that in $E_{2,1}$ and $E_{3,1}$ terms, we can easily control those terms and obtain
\[\left|\int_0^{t_k} E_{2,2} + E_{3,2} \; dt \right| \lesssim \norm{u}_{F^0(T)}\sum_{|k-k'|\le5}\norm{P_{k'}u}_{F_{k'}(T)}^2,\]
which implies
\begin{equation}\label{eq:quad bound2}
\sum_{k \ge 1}2^{2sk} \sup_{t_k \in [0,T]}\left|\int_0^{t_k} E_{2,2} + E_{3,2} \; dt \right| \lesssim \norm{u}_{F^0(T)}\norm{u}_{F^s(T)}^2,
\end{equation}
For 
\begin{equation}\label{eq:energy2-2.3}
\left|\int_0^{t_k} E_{2,4} + E_{3,4} \; dt \right|,
\end{equation}
it is enough to consider
\begin{equation}\label{eq:energy2-2.4}
\sum_{k_1\ge0}\left|\int_0^{t_k}\sum_{n,\overline{\N}_{2,n}}\chi_{k_1}(n_1)n_1|\wh{u}(n_1)|^2\wh{u}(n_1)\chi_k(n_2)\frac{1}{n_2}\wh{u}(n_2)\chi_k(n)\frac1n\wh{u}(n)\; dt\right|
\end{equation}
and
\begin{equation}\label{eq:energy2-2.5}
\sum_{k_1\ge0}\left|\int_0^{t_k}\sum_{n,\overline{\N}_{2,n}}\chi_{k_1}(n_1)\wh{u}(n_1)\chi_k(n_2)\frac{1}{n_2}\wh{u}(n_2)\chi_k(n)|\wh{u}(n)|^2\wh{u}(n)\; dt\right|,
\end{equation}
due to the symmetry of $n_2$ and $n$ variables. Since we only consider the cases when $k_1 \le k- 10$ and $|k-k_1|\le 5$, both \eqref{eq:energy2-2.4} and \eqref{eq:energy2-2.5} are reduced to
\[\norm{u}_{L_{t_k}^{\infty}L_x^2}^2\sum_{k_1 \ge 0} 2^{-k}\left|\int_0^{t_k}\sum_{n,\overline{\N}_{2,n}}\chi_{k_1}(n_1)\wh{u}(n_1)\chi_k(n_2)\wh{u}(n_2)\chi_k(n)\wh{u}(n)\; dt\right|.\]
By Lemma \ref{lem:energy2-1} and $F^0(T) \hookrightarrow C_TL^2$ \eqref{eq:small data1.1}, we obtain that
\begin{equation}\label{eq:quad bound3}
\sum_{k \ge 1}2^{2sk} \sup_{t_k \in [0,T]}\eqref{eq:energy2-2.3} \lesssim \norm{u}_{F^0(T)}^3\norm{u}_{F^s(T)}^2.
\end{equation}
Lastly, we estimate cubic and quartic terms as
\begin{equation}\label{eq:energy2-2.6}
\left|\int_0^{t_k} E_{2,3} + E_{3,3} \; dt \right|.
\end{equation}
\begin{remark}\label{rem:resonant3}
In order to control \eqref{eq:energy2-2.6}, we need to check carefully the cubic resonant case in $E_{2,3}$ and $E_{3,3}$. The only worst terms are of the form of 
\begin{align}
&\mbox{Re}\Big[\alpha \sum_{n,\overline{\N}_{2,n}}\wh{u}(n_1)\psi_k(n_2)\frac{1}{n_2}\left\{10in_2\sum_{\N_{2,n_2}}\wh{u}(n_{2,1})n_{2,2}^2\wh{u}(n_{2,2})\right\}\chi_k(n)\frac1n\wh{u}(n)\Big] \nonumber \\
=&\mbox{Re}\Big[10\alpha i \sum_{n,\overline{\N}_{2,n},\N_{2,n_2}}\wh{u}(n_1)\psi_k(n_2)\wh{u}(n_{2,1})n_{2,2}^2\wh{u}(n_{2,2})\chi_k(n)\frac1n\wh{u}(n)\Big], \label{eq:cubic resonant1}
\end{align}
and
\begin{equation}\label{eq:cubic resonant2}
\mbox{Re}\Big[10\beta i \sum_{n,\overline{\N}_{2,n},\N_{2,n_2}}\wh{u}(n_1)\chi_k(n_2)\wh{u}(n_{2,1})n_{2,2}^2\wh{u}(n_{2,2})\chi_k(n)\frac1n\wh{u}(n)\Big],
\end{equation}
where $\N_{2,n_2}$ is the same set as $\N_{2,n}$ of $n_{2,1}$ and $n_{2,2}$ variables. Especially, if $n_{2.2} = -n$ (exact cubic resonant case), we cannot use the maximum modulation effect to attack the derivative in the high frequency mode. But, since $\psi_k$ and $\chi_k$ are real-valued even functions and $n_1+n_{2,1} = 0$, we observe that
\[\wh{u}(n_1)\psi_k(n_2)\wh{u}(n_{2,1})\chi_k(n)n|\wh{u}(n)|^2 = \psi_k(n_2)|\wh{u}(n_1)|^2\chi_k(n)n|\wh{u}(n)|^2\]
and
\[\wh{u}(n_1)\chi_k(n_2)\wh{u}(n_{2,1})\chi_k(n)n|\wh{u}(n)|^2 = \chi_k(n_2)|\wh{u}(n_1)|^2\chi_k(n)n|\wh{u}(n)|^2\]
Those observations show that both \eqref{eq:cubic resonant1} and \eqref{eq:cubic resonant2} are vanishing since 
\[\psi_k(n_2)|\wh{u}(n_1)|^2\chi_k(n)n|\wh{u}(n)|^2\]
and
\[\chi_k(n_2)|\wh{u}(n_1)|^2\chi_k(n)n|\wh{u}(n)|^2\] 
are real numbers. Moreover, for the other cubic resonant case, by applying the same argument as above, we can observe that those are vanishing. And to conclude, we do not need to consider the cubic resonant case any more. 
\end{remark}
We first consider the cubic term in \eqref{eq:energy2-2.6}. For
\[\sum_{n,\overline{\N}_{2,n}}\wh{N}_2(u)(n_1)\chi_{k}(n_2)\frac1{n_2}\wh{u}(n_2)\chi_{k}(n)\frac1n\wh{u}(n),\]
if the frequency support of $n$ ($\sim2^k$) is the widest among the other frequency supports, it suffices to estimate
\begin{equation}\label{eq:energy2-2.7}
\sum_{0 \le k_1 \le k_2 \le k} 2^{k_2} \left|\sum_{n,\overline{\N}_{3,n}}\int_0^{t_k}\chi_{k_1}(n_1)\wh{u}(n_1)\chi_{k_2}(n_2)\wh{u}(n_2)\chi_{k}(n_3)\wh{u}(n_3)\chi_{k}(n)\wh{u}(n) \; dt\right|.
\end{equation}
We use Lemma \ref{lem:energy-cubic} so that we obtain
\begin{equation}\label{eq:energy2-2.8}
\begin{aligned}
\eqref{eq:energy2-2.7} &\lesssim \sum_{|k-k'|\le 5} 2^{3k/2}\norm{P_{k'}u}_{F_{k'}(T)}^4\\
&+\sum_{k_1 \le k_2 -10}2^{k_1/2}\norm{P_{k_1}u}_{F_{k_1}(T)}\sum_{|k-k'|\le 5}\norm{P_{k'}u}_{F_{k'}(T)}^3\\
&+\sum_{\substack{k_2 \le k -10\\|k_1-k_2|\le 5}}2^{3k_2/2}\norm{P_{k_1}u}_{F_{k_1}(T)}^2\sum_{|k-k'|\le 5}2^{-k}\norm{P_{k'}u}_{F_{k'}(T)}^2\\
&+\sum_{\substack{k_2 \le k -10\\k_1 \le k_2 -10}}2^{k_2}\norm{P_{k_1}u}_{F_{k_1}(T)}\norm{P_{k_2}u}_{F_{k_2}(T)}\sum_{|k-k'|\le 5}2^{-k}\norm{P_{k'}u}_{F_{k'}(T)}^2\\
&\lesssim \norm{u}_{F^{\frac34}(T)}^2\sum_{|k-k'|\le 5}\norm{P_{k'}u}_{F_{k'}(T)}^2.
\end{aligned}
\end{equation}
Otherwise, we only need to consider 
\begin{equation}\label{eq:energy2-2.9}
\sum_{\substack{k_1 \ge k + 10\\|k_1-k_2|\le 5}} 2^{3k_2}2^{-2k} \left|\sum_{n,\overline{\N}_{3,n}}\int_0^{t_k}\chi_{k_1}(n_1)\wh{u}(n_1)\chi_{k_2}(n_2)\wh{u}(n_2)\chi_{k}(n_3)\wh{u}(n_3)\chi_{k}(n)\wh{u}(n) \; dt\right|.
\end{equation}
By using \eqref{eq:energy-cubic3}, we get
\[\begin{aligned}
\eqref{eq:energy2-2.9} &\lesssim \sum_{\substack{k_1 \ge k + 10\\|k_1-k_2|\le 5}}2^{2k_2}\norm{P_{k_1}u}_{F_{k_1}(T)}^2 \sum_{|k-k'|\le 5}2^{-3k/2}\norm{P_{k'}u}_{F_{k'}(T)}^2\\
&\lesssim \norm{u}_{F^1(T)}^2\sum_{|k-k'|\le 5}\norm{P_{k'}u}_{F_{k'}(T)}^2.
\end{aligned}\]
For 
\[\sum_{n,\overline{\N}_{2,n}}\wh{u}(n_1)\chi_{k}(n_2)\frac1{n_2}\wh{N}_2(u)(n_2)\chi_{k}(n)\frac1n\wh{u}(n),\]
the following case is dominant among all cases:
\begin{equation}\label{eq:energy2-2.10}
\sum_{0 \le k_1 \le k_2 \le k_3} 2^{2k_3}2^{-k}\left|\sum_{n,\overline{\N}_{3,n}} \int_0^{t_k}\chi_{k_1}(n_1)\wh{u}(n_1)\chi_{k_2}(n_2)\wh{u}(n_2)\chi_{k_3}(n_3)\wh{u}(n_3)\chi_{k}^2(n)\wh{u}(n) \; dt \right|.
\end{equation}
If $|k-k_3| \le 5$, similarly as \eqref{eq:energy2-2.8}, we obtain
\[\eqref{eq:energy2-2.10} \lesssim \norm{u}_{F^{\frac34}(T)}^2\sum_{|k-k'|\le 5}\norm{P_{k'}u}_{F_{k'}(T)}^2.\]
For the case when $k \le k_2 - 10$, we use \eqref{eq:energy-cubic2}, \eqref{eq:energy-cubic3} and \eqref{eq:energy-cubic4} to estimate \eqref{eq:energy2-2.10}, then we have
\[\begin{aligned}
\eqref{eq:energy2-2.10} &\lesssim 2^{-3k/2}\norm{P_{k}u}_{F_k(T)}\sum_{|k_3-k'|\le 5}2^{2k_3}\norm{P_{k'}u}_{F_{k'}(T)}^3\\
&+\sum_{\substack{k_1 \le k_2 -10\\|k-k_1|\le 5}}2^{-3k/2}\norm{P_{k_1}u}_{F_{k_1}(T)}^2\sum_{|k_3-k'|\le 5}2^{2k_3}\norm{P_{k'}u}_{F_{k'}(T)}^2\\
&+\sum_{\substack{k_1 \le k_2 -10\\k_1 \le k -10}}2^{-2k}\norm{P_{k_1}u}_{F_{k_1}(T)}\norm{P_{k}u}_{F_k(T)}\sum_{|k_3-k'|\le 5}2^{2k}\norm{P_{k'}u}_{F_{k'}(T)}^2\\
&+\sum_{\substack{k_1 \le k_2 -10\\k \le k_1 -10}}2^{-2k}\norm{P_{k_1}u}_{F_{k_1}(T)}\norm{P_{k}u}_{F_k(T)}\sum_{|k_3-k'|\le 5}2^{2k}\norm{P_{k'}u}_{F_{k'}(T)}^2\\
&\lesssim \norm{u}_{F^{1}(T)}^2\sum_{|k-k'|\le 5}\norm{P_{2,k'}u}_{F_{k'}(T)}^2 + 2^{-(s+3/2)k}\norm{P_ku}_{F_k(T)}\norm{u}_{F^2(T)}^2\norm{u}_{F^s(T)},
\end{aligned}\]
for $s \ge 0$. 

For the estimation of the quartic terms in \eqref{eq:energy2-2.6}, by using the similar argument as in the proof of Lemma \ref{lem:energy2-1} and the Cauchy-Schwarz inequality, we use the following estimate:
\begin{equation}\label{eq:energy2-2.11}
\begin{aligned}
&\left|\int_{\T \times [0,T]}u_1u_2u_3u_4u_5 \; dxdt \right|\\
&\hspace{3em}\lesssim 2^{2k_5}\sum_{j_i \ge 2k_6} \left|\sum_{\overline{n} \in \Gamma_5(\Z)}\int_{\overline{\tau}\in\Gamma_5(\R)}\prod_{i=1}^{5}\ft[\gamma(2^{2k_5}t-m)u_i](\tau_i,n_i) \right|\\
&\hspace{3em}\lesssim 2^{2k_5}\prod_{l=1}^{3}2^{k_l/2}\sum_{j_i \ge 2k_5}2^{-(j_{max}+j_{sub})/2}\prod_{i=1}^{5}2^{j_i/2}\norm{\eta_{j_i}(\tau_i-\mu(n_i))\ft[\gamma(2^{2k_5}t-m)u_i]}_{L_{\tau_i}^2\ell_{n_i}^2}\\
&\hspace{3em}\lesssim 2^{(k_1+k_2+k_3)/2}\prod_{i=1}^{5}\norm{u_i}_{F_{k_i}(T)},
\end{aligned}
\end{equation}
where $u_i = P_{k_i}u \in F_{k_i}(T)$, $i=1,2,3,4,5$ and assuming that $k_1 \le k_2 \le k_3 \le k_4 \le k_5$.

Since the cubic term in $\wh{N}_2(u)$ has the one total derivative, it suffices to estimate the following two terms:
\begin{equation}\label{eq:energy2-2.12}
\sum_{\substack{0 \le k_1 \le k_2 \le k_3\\k \le k_2 - 10}}2^{k_3}2^{-2k}\left|\sum_{\overline{n} \in \Gamma_5(\Z)}\int_0^{t_k}\chi_{k_1}(n_1)\wh{u}(n_1)\chi_{k_2}(n_2)\wh{u}(n_2)\chi_{k_3}(n_3)\wh{u}(n_3)\chi_{k}(n_4)\wh{u}(n_4)\chi_k(n)\wh{u}(n)\;dt\right|
\end{equation} 
and
\begin{equation}\label{eq:energy2-2.13}
\sum_{0 \le k_1 \le k_2 \le k_3 \le k_4}2^{-k}\left|\sum_{\overline{n} \in \Gamma_5(\Z)}\int_0^{t_k}\chi_{k_1}(n_1)\wh{u}(n_1)\chi_{k_2}(n_2)\wh{u}(n_2)\chi_{k_3}(n_3)\wh{u}(n_3)\chi_{k}(n_4)\wh{u}(n_4)\chi_k(n)\wh{u}(n)\;dt\right|.
\end{equation} 
By using \eqref{eq:energy2-2.11}, we can easily have
\[\eqref{eq:energy2-2.12} + \eqref{eq:energy2-2.13} \lesssim \norm{u}_{F^{\frac12+}(T)}^3\sum_{|k-k'|\le5}\norm{P_{k'}u}_{F_{k'}(T)}^2 + 2^{-(s+1/2)k}\norm{P_ku}_{F_k(T)}\norm{u}_{F^{\frac12+}(T)}^3\norm{u}_{F^s(T)},\]
for $s \ge 0$.

Together with all bounds of the cubic and quartic terms, we conclude that
\begin{equation}\label{eq:cubic,quartic bound}
\sum_{k \ge 1}2^{2sk}\sup_{t_k \in [0,T]}\left|\int_0^{t_k} E_{2,3} + E_{3,3} \; dt \right| \lesssim \left(\norm{u}_{F^2(T)}^2 + \norm{u}_{F^{\frac12+}(T)}^3\right)\norm{u}_{F^s(T)}^2.
\end{equation}
Therefore, we complete the proof of Proposition \ref{prop:energy2-2} by recalling the definition of the modified energy \eqref{eq:new energy2-2} and gathering \eqref{eq:quad bound1}, \eqref{eq:quad bound2}, \eqref{eq:quad bound3} and \eqref{eq:cubic,quartic bound}.
\end{proof}

As a Corollary to Lemma \ref{lem:comparable energy2-1} and Proposition \ref{prop:energy2-2}, we obtain \emph{a priori bound} of $\norm{u}_{E^s(T)}$ for a smooth solution $u$ to the equation \eqref{eq:5kdv4}.
\begin{corollary}\label{cor:energy2-2}
Let $s \ge 2$ and $T \in (0,1]$. Then, there exists $0 < \delta \ll 1$ such that
\[\norm{u}_{E^s(T)}^2 \lesssim (1+ \norm{u_0}_{H^s})\norm{u_0}_{H^s}^2 + \left(\norm{u}_{F^{\frac32+}(T)}+\norm{u}_{F^{2}(T)}^2+\norm{u}_{F^{\frac12+}(T)}^3\right)\norm{u}_{F^{s}(T)}^2,\]
for the solution $u \in C([-T,T];H^{\infty}(\T))$ to \eqref{eq:5kdv4} with $\norm{u}_{L_T^{\infty}H_x^{\frac12+}} \le \delta$.
\end{corollary}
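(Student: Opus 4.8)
The plan is to obtain the corollary as a direct combination of Lemma \ref{lem:comparable energy2-1} and Proposition \ref{prop:energy2-2}, with the smallness hypothesis serving only to activate the energy comparison.

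First I would fix the smallness parameter $0 < \delta \ll 1$ to be the one produced by Lemma \ref{lem:comparable energy2-1}. The key point is that although that lemma is stated with smallness measured in $H^s$, its proof uses only the embedding $H^{1/2+}(\T) \hookrightarrow L^\infty(\T)$ to control the cubic correction terms in the modified energy \eqref{eq:new energy2-1}; consequently the comparison $\frac12\norm{u}_{E^s(T)}^2 \le E_{T}^s(u)$ remains valid under the weaker assumption $\norm{u}_{L_T^\infty H_x^{1/2+}} \le \delta$, which is precisely the hypothesis of the corollary. Since the smooth solution $u \in C([-T,T];H^\infty(\T))$ certainly lies in $E^s(T) \cap C([-T,T];H^s(\T))$, the lemma applies.

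From the lower comparison I then read off $\norm{u}_{E^s(T)}^2 \le 2\,E_{T}^s(u)$, the harmless constant $2$ being absorbed into the implicit constant in $\lesssim$. Applying Proposition \ref{prop:energy2-2} to bound $E_{T}^s(u)$ yields
\[
\norm{u}_{E^s(T)}^2 \le 2\,E_{T}^s(u) \lesssim (1+\norm{u_0}_{H^s})\norm{u_0}_{H^s}^2 + \left(\norm{u}_{F^{\frac32+}(T)}+\norm{u}_{F^{2}(T)}^2+\norm{u}_{F^{\frac12+}(T)}^3\right)\norm{u}_{F^{s}(T)}^2,
\]
which is the claimed estimate.

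Because the substantive work --- both the modified-energy identity and the dyadic block estimates it relies upon --- has already been carried out in Proposition \ref{prop:energy2-2} and in the $L^2$-block estimates of Lemma \ref{lem:energy2-1}, Lemma \ref{lem:energy-cubic} and Lemma \ref{lem:commutator2}, there is no genuine analytic obstacle remaining. The only point demanding care is the matching of hypotheses noted above: one must verify that the smallness needed to guarantee the comparability of the modified energy is controlled by $\norm{u}_{L_T^\infty H_x^{1/2+}}$ rather than by the full $H^s$ norm, so that the corollary's hypothesis is genuinely sufficient. This is exactly the scaling-subcritical feature exploited throughout the paper, and I would record it explicitly to justify the downgrade from the $H^s$-smallness in Lemma \ref{lem:comparable energy2-1} to the $H^{1/2+}$-smallness stated here.
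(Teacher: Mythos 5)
Your proposal is correct and follows exactly the route the paper intends: the corollary is obtained by combining the lower bound $\tfrac12\norm{u}_{E^s(T)}^2 \le E_T^s(u)$ from Lemma \ref{lem:comparable energy2-1} with the bound on $E_T^s(u)$ from Proposition \ref{prop:energy2-2}. Your remark that the smallness hypothesis can be downgraded from $H^s$ to $H^{\frac12+}$ because the comparison lemma only uses the embedding $H^{\frac12+}(\T)\hookrightarrow L^\infty(\T)$ on the low-frequency factor of the cubic correction is precisely the point that reconciles the lemma's statement with the corollary's hypothesis.
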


Next, we consider the energy estimate for the difference of two solutions $u_1$ and $u_2$ to the equation in \eqref{eq:5kdv4}. Let $w = u_1 - u_2$, then $w$ satisfies
\begin{equation}\label{eq:5kdv5}
\pt\wh{w}(n) - i\mu_2(n)\wh{w}(n)=\wh{N}_{1,1}(u_1,u_2,w)+\wh{N}_{1,2}(u_1,u_2,w)+\wh{N}_{1,3}(u_1,u_2,w)+\wh{N}_{1,4}(u_1,u_2,w),
\end{equation}
with $w(0,x) = w_0(x) = u_{1,0}(x) - u_{2,0}(x)$ and where
\begin{equation}\label{eq:energy-nonlinear2-1}
\wh{N}_{1,1}(u_1,u_2,w) = -30in(|\wh{u}_1(n)|^2\wh{w}(n) + \wh{u}_1(n)\wh{u}_2(n)\wh{w}(-n) + |\wh{u}_2(n)|^2\wh{w}(n)),
\end{equation}

\begin{equation}\label{eq:energy-nonlinear2-2}
\begin{aligned}
\wh{N}_{1,2}(u_1,u_2,w) &= 10in \sum_{\N_{3,n}} \wh{w}(n_1)\wh{u}_1(n_2)\wh{u}_1(n_3)\\ 
&+ 10in \sum_{\N_{3,n}} \wh{u}_2(n_1)\wh{w}(n_2)\wh{u}_1(n_3)\\
&+ 10in \sum_{\N_{3,n}} \wh{u}_2(n_1)\wh{u}_2(n_2)\wh{w}(n_3)
\end{aligned}
\end{equation}

\begin{equation}\label{eq:energy-nonlinear2-3}
\wh{N}_{1,3}(u_1,u_2,w) = 10in \sum_{\N_{2,n}} n_2^2(\wh{w}(n_1)\wh{u}_1(n_2) + \wh{u}_2(n_1)\wh{w}(n_2)) 
\end{equation}
and
\begin{equation}\label{eq:energy-nonlinear2-4}
\wh{N}_{1,4}(u_1,u_2,w) = 10i \sum_{\N_{2,n}} n_1n_2^2(\wh{w}(n_1)\wh{u}_1(n_2) + \wh{u}_2(n_1)\wh{w}(n_2))
\end{equation}
We denote $\wh{N}_{1,1}(u_1,u_2,w)+\wh{N}_{1,2}(u_1,u_2,w)+\wh{N}_{1,3}(u_1,u_2,w)+\wh{N}_{1,4}(u_1,u_2,w)$ by $\wh{N}_2(u_1,u_2,w)$. 

For $k \ge 1$, we define the localized modified energy for the difference of two solutions by
\[\begin{aligned}
\wt{E}_{k}(w)(t) &= \norm{P_kw(t)}_{L_x^2}^2 + \mbox{Re}\left[\wt{\alpha} \sum_{n,\overline{\N}_{2,n}}\wh{u}_2(n_1)\psi_k(n_2)\frac{1}{n_2}\wh{w}(n_2)\chi_k(n)\frac1n\wh{w}(n)\right]\\
&+ \mbox{Re}\left[\wt{\beta} \sum_{n,\overline{\N}_{2,n}}\wh{u}_2(n_1)\chi_k(n_2)\frac{1}{n_2}\wh{w}(n_2)\chi_k(n)\frac1n\wh{w}(n)\right]
\end{aligned}\]
and
\[\wt{E}_{T}^s(w) = \norm{P_0w(0)}_{L_x^2}^2 + \sum_{k \ge 1}2^{2sk} \sup_{t_k \in [-T,T]} \wt{E}_{2,k}(w)(t_k),\]
where $\wt{\alpha}$ and $\wt{\beta}$ are real and will be chosen later.

Similarly as in Lemma \ref{lem:comparable energy2-1}, we can show that $\wt{E}_{T}^s(w)$ and $\norm{w}_{E^s(T)}$ are comparable.
\begin{lemma}\label{lem:comparable energy2-2}
Let $s > \frac12$. Then, there exists $0 < \delta \ll 1$ such that  
\[\frac12\norm{w}_{E^s(T)}^2 \le \wt{E}_{T}^s(w) \le \frac32\norm{w}_{E^s(T)}^2,\]
for all $u_2 \in E^s(T) \cap C([-T,T];H^s(\T))$ satisfying $\norm{u_2}_{L_T^{\infty}H^s(\T)} \le \delta$.
\end{lemma}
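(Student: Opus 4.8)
The plan is to mimic the argument of Lemma \ref{lem:comparable energy2-1} line by line, the one new feature being that the correction terms defining $\wt E_k(w)$ are now quadratic in $w$ carrying a single factor of $u_2$, so it is the smallness of $\norm{u_2}_{L_T^{\infty}H^s}$ (rather than of $u$ itself) that supplies the gain. First I would observe that both $\wt E_T^s(w)$ and $\norm{w}_{E^s(T)}^2$ share the same zero-frequency term $\norm{P_0w(0)}_{L_x^2}^2$ and the same leading dyadic term $\norm{P_kw(t_k)}_{L_x^2}^2$, so that their difference is governed entirely by the two correction sums weighted by $\wt\alpha$ and $\wt\beta$. It therefore suffices to prove that for each fixed $t\in[-T,T]$ and each $k\ge 1$,
\[\left|\wt E_k(w)(t) - \norm{P_kw(t)}_{L_x^2}^2\right|\lesssim \norm{u_2(t)}_{H^s}\,\norm{P_kw(t)}_{L_x^2}^2,\]
with implicit constant depending only on $\wt\alpha,\wt\beta$.

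Next I would estimate a single correction sum, say the $\wt\alpha$ term. On the support of the summand the cut-offs $\psi_k(n_2)$ and $\chi_k(n)$ force $|n_2|\sim|n|\sim 2^k$, whence $|\psi_k(n_2)|,|\chi_k(n)|\lesssim 1$ and the weights obey $\tfrac1{|n_2|}\cdot\tfrac1{|n|}\lesssim 2^{-2k}\le 1$. Writing $n_1=-(n_2+n)$ and applying the Cauchy--Schwarz inequality together with Young's convolution inequality ($\ell^1 \ast \ell^2 \to \ell^2$) in the $n_2$-summation, the sum is bounded by $\norm{\wh{u}_2}_{\ell^1}\,\norm{P_kw(t)}_{L_x^2}^2$. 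The decisive input is then the embedding of $H^s$ into the Wiener algebra for $s>\tfrac12$: by Cauchy--Schwarz,
\[\norm{\wh{u}_2}_{\ell^1}\le\Big(\sum_{n}\bra{n}^{-2s}\Big)^{1/2}\norm{u_2}_{H^s}\lesssim\norm{u_2}_{H^s},\]
which is exactly where the hypothesis $s>\tfrac12$ is used. The $\wt\beta$ term is treated identically, since $\chi_k$ satisfies the same pointwise bounds as $\psi_k$.

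Combining these estimates with $\norm{u_2(t)}_{H^s}\le\norm{u_2}_{L_T^{\infty}H^s}\le\delta$ yields, for every $t$ and every $k\ge 1$,
\[(1-C\delta)\norm{P_kw(t)}_{L_x^2}^2\le\wt E_k(w)(t)\le(1+C\delta)\norm{P_kw(t)}_{L_x^2}^2.\]
Taking the supremum over $t_k\in[-T,T]$ (the two-sided constants $1\pm C\delta$ commute with the supremum), multiplying by $2^{2sk}$, summing over $k\ge 1$, and adding the common term $\norm{P_0w(0)}_{L_x^2}^2$ gives $(1-C\delta)\norm{w}_{E^s(T)}^2\le\wt E_T^s(w)\le(1+C\delta)\norm{w}_{E^s(T)}^2$; choosing $\delta$ so small that $C\delta\le\tfrac12$ completes the proof.

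The only genuine obstacle is the absence of symmetry flagged in the introduction: because the correction is linear in $u_2$ and quadratic in $w$, one must extract all the smallness from $\norm{u_2}_{H^s}$ alone while retaining the full factor $\norm{P_kw}_{L_x^2}^2$. This is precisely why the estimate is arranged to place the $\ell^1$-summation on the $u_2$-factor and the two $\ell^2$-factors on the pieces of $w$, rather than distributing the regularity symmetrically as in the self-coupled energy of Lemma \ref{lem:comparable energy2-1}.
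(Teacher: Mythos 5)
Your argument is correct and is essentially the paper's own: the paper proves the analogous Lemma \ref{lem:comparable energy2-1} by invoking the Sobolev embedding $H^s(\T)\hookrightarrow L^\infty(\T)$ for $s>\tfrac12$ (citing \cite{KP2015}) to absorb the cubic correction terms, and your use of the Wiener-algebra bound $\norm{\wh{u}_2}_{\ell^1}\lesssim\norm{u_2}_{H^s}$ is the same mechanism on the Fourier side, placing all the smallness on the single $u_2$ factor and $\ell^2$ on the two $w$ factors. The only cosmetic caveat is that $\psi_k\wh{w}$ and $\chi_k\wh{w}$ are supported in $I_k$ rather than being exactly $\widehat{P_kw}$, so the pointwise bound should read $\lesssim\delta\sum_{|k'-k|\le 2}\norm{P_{k'}w(t)}_{L^2}^2$, which is harmlessly absorbed after the weighted summation over $k$.
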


\begin{proposition}\label{prop:energy2-3}
Let $s \ge 2$ and $T \in (0,1]$, Then, for solutions $w \in C([-T,T];H^{\infty}(\T))$ to \eqref{eq:5kdv5} and $u_1, u_2 \in C([-T,T];H^{\infty}(\T))$ to \eqref{eq:5kdv4}, we have
\begin{equation}\label{eq:energy2-3.1}
\begin{aligned}
\wt{E}_{T}^0(w) &\lesssim (1+ \norm{u_{1,0}}_{H^{\frac12+}}+\norm{u_{2,0}}_{H^{\frac12+}})\norm{w_0}_{L_x^2}^2\\
&+(1+ \norm{u_1}_{F^2(T)}+\norm{u_2}_{F^2(T)})(\norm{u_1}_{F^2(T)}+\norm{u_2}_{F^2(T)})\norm{w}_{F^0(T)}^2\\
&+\left(\sum_{1 \le i \le j \le k \le 2}\norm{u_i}_{F^2(T)}\norm{u_j}_{F^2(T)}\norm{u_k}_{F^2(T)}\right)\norm{w}_{F^0(T)}^2.
\end{aligned} 
\end{equation}
and
\begin{equation}\label{eq:energy2-3.2}
\begin{aligned}
\wt{E}_{T}^s(w) &\lesssim (1+ \norm{u_{2,0}}_{H^{\frac12+}})\norm{w_0}_{H^s}^2\\
&+(\norm{u_1}_{F^{2s}(T)}+\norm{u_2}_{F^{2s}(T)})\norm{w}_{F^0(T)}\norm{w}_{F^s(T)}\\
&+(\norm{u_1}_{F^s(T)}+\norm{u_2}_{F^s(T)})\norm{w}_{F^s(T)}^2\\
&+(\norm{u_1}_{F^s(T)}+\norm{u_2}_{F^s(T)})(\norm{u_1}_{F^{2s}(T)}+\norm{u_2}_{F^{2s}(T)})\norm{w}_{F^0(T)}\norm{w}_{F^s(T)}\\
&+\left(\sum_{1 \le i \le j \le 2}\norm{u_i}_{F^s(T)}\norm{u_j}_{F^s(T)}\right)\norm{w}_{F^s(T)}^2\\
&+\left(\sum_{1 \le i \le j \le k \le 2}\norm{u_i}_{F^s(T)}\norm{u_j}_{F^s(T)}\norm{u_k}_{F^s(T)}\right)\norm{w}_{F^s(T)}^2.
\end{aligned}
\end{equation} 
\end{proposition}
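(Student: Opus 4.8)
The plan is to mirror the proof of Proposition \ref{prop:energy2-2}, now carried out for the difference equation \eqref{eq:5kdv5} with $w=u_1-u_2$. For each $k\ge 1$ I would differentiate the localized modified energy $\wt{E}_k(w)(t)$ in time. Differentiating $\norm{P_kw}_{L_x^2}^2$ produces the two-derivative quadratic terms coming from $\wh{N}_{1,3}$ and $\wh{N}_{1,4}$ together with cubic terms from $\wh{N}_{1,1}$ and $\wh{N}_{1,2}$, while differentiating the two correction terms produces, after the algebraic expansion of $\mu(n_1)+\mu(n_2)+\mu(n)$ used in Proposition \ref{prop:energy2-2}, a leading two-derivative piece plus genuinely lower-order contributions and new cubic/quartic pieces (here one must also feed the equation for $u_2$ through the correction slot carrying $\wh{u}_2(n_1)$, exactly as $u$ was fed through in Proposition \ref{prop:energy2-2}). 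The first step is then to choose the correction coefficients $\wt\alpha=-4$ and $\wt\beta=6$ so that the worst two-derivative quadratic terms cancel against the leading part of $\frac{d}{dt}II(t)+\frac{d}{dt}III(t)$ up to the commutator error, which is controlled by Lemma \ref{lem:commutator2}.

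After this cancellation I would estimate the surviving terms as the analogues of the $B_i(k)$ in Proposition \ref{prop:energy2-2}, now with the three frequency slots occupied by mixtures of $u_1$, $u_2$, and $w$: quadratic blocks are bounded via Lemma \ref{lem:energy2-1}, cubic blocks via Lemma \ref{lem:energy-cubic}, and the quintic remainder via \eqref{eq:energy2-2.11}. For the weak estimate \eqref{eq:energy2-3.1} one measures $w$ in the $F^0$ norm while letting the smooth solutions $u_1,u_2$ carry the $F^2$ norm, which is precisely the distribution that Lemmas \ref{lem:energy2-1} and \ref{lem:energy-cubic} permit; summing the dyadic bounds, invoking $F^0(T)\hookrightarrow C_TL^2$ together with \eqref{eq:small data1.1}, and applying Lemma \ref{lem:comparable energy2-2} then yields \eqref{eq:energy2-3.1}.

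The essential new difficulty, and the reason the Bona--Smith two-tier structure is required, is the loss of symmetry among the three entries. In Proposition \ref{prop:energy2-2} a high-high-low interaction could be symmetrized so that the two derivatives effectively landed on the low-frequency factor; here the low-frequency slot may be forced to be $w$ while the high derivatives sit on $u_1$ or $u_2$, and no symmetrization is available. In the strong estimate \eqref{eq:energy2-3.2} I would therefore not attempt to redistribute the derivative loss, but instead, in exactly those asymmetric high-low blocks, place $2s$ derivatives on the smooth factor $u_i$ and split the two $w$-factors between the $F^0$ and $F^s$ norms. This is what produces the mixed terms $(\norm{u_1}_{F^{2s}(T)}+\norm{u_2}_{F^{2s}(T)})\norm{w}_{F^0(T)}\norm{w}_{F^s(T)}$ on the right-hand side, and it is what makes the weak bound \eqref{eq:energy2-3.1} an indispensable input to \eqref{eq:energy2-3.2}. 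I expect this to be the hardest step: one must track, in each frequency configuration, which factor may legitimately absorb the extra $s$ derivatives, and verify that after this reallocation the powers of $2^k$ still sum after multiplication by $2^{2sk}$.

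Finally, the cubic contributions from the resonant term $\wh{N}_{1,1}$ must be treated as in Remark \ref{rem:resonant3}: in the exact cubic-resonant configuration the modulation provides no gain, but after pairing conjugate frequencies the relevant sums become real while an imaginary prefactor remains, forcing them to vanish, so they contribute nothing to the energy increment. The remaining cubic and quartic pieces are subcritical in derivative count and are absorbed into the last two lines of \eqref{eq:energy2-3.2}. Collecting the dyadic estimates, summing in $k$, and applying Lemma \ref{lem:comparable energy2-2} then gives both \eqref{eq:energy2-3.1} and \eqref{eq:energy2-3.2}.
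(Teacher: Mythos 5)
Your proposal follows essentially the same route as the paper: differentiate the localized modified energy, choose $\wt\alpha=-4$, $\wt\beta=6$ so that Lemma \ref{lem:commutator2} absorbs the worst two-derivative quadratic blocks, estimate the surviving pieces with Lemmas \ref{lem:energy2-1} and \ref{lem:energy-cubic} and the quintic bound \eqref{eq:energy2-2.11}, and you correctly identify both the loss of symmetry as the source of the $F^{2s}$-weighted mixed terms and Lemma \ref{lem:comparable energy2-2} as the final step. The one point where your argument overreaches is the claim that the resonant contributions of $\wh{N}_{1,1}$ and the cubic-resonant configurations ``contribute nothing to the energy increment.'' For the difference equation the realness cancellation of Remark \ref{rem:resonant3} survives only for the fully diagonal terms: the cross term $\wh{u}_1(n)\wh{u}_2(n)\wh{w}(-n)$ in \eqref{eq:energy-nonlinear2-1} does not vanish and must be estimated directly (the paper bounds it by $\norm{u_1}_{F^{\frac12}(T)}\norm{u_2}_{F^{\frac12}(T)}\norm{P_kw}_{F_k(T)}^2$ via the embedding \eqref{eq:small data1.1}), and likewise in Remark \ref{rem:resonant4} only the single worst term $|\wh{u}_2(n_1)|^2\chi_k(n+n_1)\chi_k(n)n|\wh{w}(n)|^2$ is real and drops out, while the remaining cubic-resonant configurations are nonzero and are controlled by Cauchy--Schwarz plus Sobolev embedding, producing the bound $\norm{u_1}_{F^0(T)}\norm{u_2}_{F^{s+1}(T)}\norm{w}_{F^0(T)}\norm{w}_{F^s(T)}$. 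These surviving resonant pieces are harmless, but they must appear in your bookkeeping rather than being discarded.
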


\begin{remark}
In fact, in the energy estimates for the difference of two solutions, since the symmetry of functions breaks down, one can obtain Proposition \ref{prop:energy2-3} by defining the localized modified energy by
\[\begin{aligned}
\wt{E}_{k}(w)(t) &= \norm{P_kw(t)}_{L_x^2}^2\\ 
&+ \mbox{Re}\left[\wt{\alpha}_{1} \sum_{n,\overline{\N}_{2,n}}\wh{u}_1(n_1)\psi_k(n_2)\frac{1}{n_2}\wh{w}(n_2)\chi_k(n)\frac1n\wh{w}(n)\right]\\
&+\mbox{Re}\left[\wt{\alpha}_{2} \sum_{n,\overline{\N}_{2,n}}\wh{u}_2(n_1)\psi_k(n_2)\frac{1}{n_2}\wh{w}(n_2)\chi_k(n)\frac1n\wh{w}(n)\right]\\
&+ \mbox{Re}\left[\wt{\beta}_{1} \sum_{n,\overline{\N}_{2,n}}\wh{u}_1(n_1)\chi_k(n_2)\frac{1}{n_2}\wh{w}(n_2)\chi_k(n)\frac1n\wh{w}(n)\right]\\
&+ \mbox{Re}\left[\wt{\beta}_{2} \sum_{n,\overline{\N}_{2,n}}\wh{u}_2(n_1)\chi_k(n_2)\frac{1}{n_2}\wh{w}(n_2)\chi_k(n)\frac1n\wh{w}(n)\right]
\end{aligned}\]
and using another forms of \eqref{eq:energy-nonlinear2-1}, \eqref{eq:energy-nonlinear2-2}, \eqref{eq:energy-nonlinear2-3} and \eqref{eq:energy-nonlinear2-4}, by the symmetry of $u_1$ and $u_2$. But, for the simplicity, we do not distinguish between $u_1$ and $u_2$ in the following proof of Proposition.
\end{remark}
\begin{proof}
We use similar argument as in the proof of Proposition \ref{prop:energy2-2}. For any $k \in \Z_+$ and $t \in [-T,T]$, we differentiate $\wt{E}_{k}(w)$ with respect to $t$ and deduce that 
\[\frac{d}{dt}\wt{E}_k(w) = \frac{d}{dt}\wt{I}(t) + \frac{d}{dt}\wt{II}(t) + \frac{d}{dt}\wt{III}(t),\]
where 
\[\begin{aligned}
\frac{d}{dt}\wt{I}(t) &= \frac{d}{dt}\norm{P_kw}_{L_x^2}^2\\
&= -30 i \sum_{n} \chi_k(n) n \wh{u}_1(-n)\wh{u}_2(-n)\wh{w}(n)\chi_k(n)\wh{w}(n)\\
&+ 2\mbox{Re}\left[\sum_{n}\chi_k(n)\left(\overline{\wh{N}}_{2,2}(u_1,u_2,w)+\overline{\wh{N}}_{2,3}(u_1,u_2,w)+\overline{\wh{N}}_{2,4}(u_1,u_2,w)\right)\chi_k(n)\wt{w}(n)\right]\\
&=: \wt{E}_{1,1},
\end{aligned}\]
\begin{align*}
\frac{d}{dt} \wt{II}(t) &= \mbox{Re}\Big[\wt{\alpha} i \sum_{n,\overline{\N}_{2,n}}5(n_1^3n_2n - n_1n_2^2n_3^2)\wh{u}_2(n_1)\psi_k(n_2)\frac{1}{n_2}\wh{w}(n_2)\chi_k(n)\frac1n\wh{w}(n)\Big]\\
&+ \mbox{Re}\Big[\wt{c}_1\wt{\alpha} i \sum_{n,\overline{\N}_{2,n}}3n_1n_2n \wh{u}_2(n_1)\psi_k(n_2)\frac{1}{n_2}\wh{w}(n_2)\chi_k(n)\frac1n\wh{w}(n)\Big]\\
&+ \mbox{Re}\Big[\wt{\alpha} \sum_{n,\overline{\N}_{2,n}}\wh{N}_2(u_2)(n_1)\psi_k(n_2)\frac{1}{n_2}\wh{w}(n_2)\chi_k(n)\frac1n\wh{w}(n)+\wh{u}_2(n_1)\psi_k(n_2)\frac{1}{n_2}\wh{N}_2(w)(n_2)\chi_k(n)\frac1n\wh{w}(n)\\
&\hspace{3em}+\wh{u}_2(n_1)\psi_k(n_2)\frac{1}{n_2}\wh{w}(n_2)\chi_k(n)\frac1n\wh{N}_2(w)(n)\Big]\\
&=:\wt{E}_{2,1} + \wt{E}_{2,2} + \wt{E}_{2,3} =: \wt{E}_2
\end{align*}
and
\begin{align*}
\frac{d}{dt} \wt{III}(t) &= \mbox{Re}\Big[\wt{\beta}i \sum_{n,\overline{\N}_{2,n}}5(n_1^3n_2n - n_1n_2^2n_3^2)\wh{u}_2(n_1)\chi_k(n_2)\frac{1}{n_2}\wh{w}(n_2)\chi_k(n)\frac1n\wh{w}(n)\Big]\\
&+ \mbox{Re}\Big[\wt{c}_1\wt{\beta}i \sum_{n,\overline{\N}_{2,n}}3n_1n_2n \wh{u}_2(n_1)\chi_k(n_2)\frac{1}{n_2}\wh{w}(n_2)\chi_k(n)\frac1n\wh{w}(n)\Big]\\
&+ \mbox{Re}\Big[\wt{\beta} \sum_{n,\overline{\N}_{2,n}}\wh{N}_2(u_2)(n_1)\chi_k(n_2)\frac{1}{n_2}\wh{w}(n_2)\chi_k(n)\frac1n\wh{w}(n)+\wh{u}_2(n_1)\chi_k(n_2)\frac{1}{n_2}\wh{N}_2(w)(n_2)\chi_k(n)\frac1n\wh{w}(n)\\
&\hspace{3em}+\wh{u}_2(n_1)\chi_k(n_2)\frac{1}{n_2}\wh{w}(n_2)\chi_k(n)\frac1n\wh{N}_2(w)(n)\Big]\\
&=:\wt{E}_{3,1} + \wt{E}_{3,2} + \wt{E}_{3,3} =: \wt{E}_3
\end{align*}
In order to prove Proposition \ref{prop:energy2-3}, we need to control
\[\left|\int_0^{t_k} \wt{E}_1 + \wt{E}_2 + \wt{E}_3 \; dt \right|.\]
By choosing $\wt{\alpha} = -4$ and $\wt{\beta} = 6$, we have, for each $k \ge 1$, that
\[\left|\int_0^{t_k} \wt{E}_1 + \wt{E}_{2,1} + \wt{E}_{3,1} \; dt \right| \lesssim \sum_{i=1}^{10} \wt{B}_i(k),\]
where
\[\begin{aligned}
\wt{B}_1(k) = \sum_{0\le k_1\le k-10}\Big|&\sum_{n,\overline{\N}_{2,n}}\int_0^{t_k} \chi_k(n)n[\chi_{k_1}(n_1)\wh{u}_2(n_1)n_2^2\wh{w}(n_2)]\chi_k(n)\wh{w}(n) \;dt\\
&+ \frac12\sum_{n,\overline{\N}_{2,n}}\int_0^{t_k} \chi_{k_1}(n_1)n_1\wh{u}_2(n_1)\chi_k(n_2)n_2\wh{w}(n_2)\chi_k(n)n\wh{w}(n) \;dt\\
&- \sum_{n,\overline{\N}_{2,n}}\int_0^{t_k} \chi_{k_1}(n_1)n_1\wh{u}_2(n_1)\psi_k(n_2)n_2\wh{w}(n_2)\chi_k(n)n\wh{w}(n) \;dt \Big|,
\end{aligned}\]
\[\begin{aligned}
\wt{B}_2(k) =\sum_{0\le  k_1 \le k-10}\Big|&\sum_{n,\overline{\N}_{2,n}}\int_0^{t_k} \chi_k(n)[\chi_{k_1}(n_1)n_1\wh{u}_2(n_1)n_2^2\wh{w}(n_2)]\chi_k(n)\wh{w}(n)\; dt\\
&+ \sum_{n,\overline{\N}_{2,n}}\int_0^{t_k} \chi_{k_1}(n_1)n_1\wh{u}_2(n_1)\chi_k(n_2)n_2\wh{w}(n_2)\chi_k(n)n\wh{w}(n)\;dt \Big|,
\end{aligned}\]
\[\wt{B}_3(k) =\sum_{\substack{k_1 \ge k -9\\k_2 \ge 0}}\Big|\sum_{n,\overline{\N}_{2,n}}\int_0^{t_k} \chi_{k_1}(n_1)\wh{u}_2(n_1)\chi_{k_2}(n_2)n_2^2\wh{w}(n_2)\chi_k^2(n)n\wh{w}(n)\; dt\Big|,\]
\[\wt{B}_4(k) =\sum_{\substack{k_1 \ge k -9\\k_2 \ge 0}}\Big|\sum_{n,\overline{\N}_{2,n}}\int_0^{t_k} \chi_{k_1}(n_1)n_1\wh{u}_2(n_1)\chi_{k_2}(n_2)n_2^2\wh{w}(n_2)\chi_k^2(n)\wh{w}(n)\; dt\Big|,\]
\[\wt{B}_5(k) =\sum_{|k-k_1|\le 5}\Big|\sum_{n,\overline{\N}_{2,n}}\int_0^{t_k} \chi_{k_1}(n_1)n_1\wh{u}_2(n_1)(\chi_k(n_2) + \psi_k(n_3))n_2\wh{w}(n_2)\chi_k(n)n\wh{w}(n) \;dt\Big|,\]
\[\wt{B}_6(k) =\sum_{k_1\ge 0}\Big|\sum_{n,\overline{\N}_{2,n}}\int_0^{t_k} n_1^3n_2n\chi_{k_1}(n_1)\wh{u}_2(n_1)(\chi_k(n_2)+\psi_k(n_3))\frac{1}{n_2}\wh{w}(n_2)\chi_k(n)\frac1n\wh{w}(n) \;dt\Big|,\]
\[\wt{B}_7(k) =\sum_{k_1, k_2 \ge 0}\Big|\sum_{n,\overline{\N}_{2,n}}\int_0^{t_k} \chi_{k_1}(n_1)\wh{w}(n_1)\chi_{k_2}(n_2)n_2^2\wh{u}_1(n_2)\chi_k^2(n)n\wh{w}(n)\; dt\Big|,\]
\[\wt{B}_8(k) =\sum_{k_1, k_2 \ge 0}\Big|\sum_{n,\overline{\N}_{2,n}}\int_0^{t_k} \chi_{k_1}(n_1)n_1\wh{w}(n_1)\chi_{k_2}(n_2)n_2^2\wh{u}_2(n_2)\chi_k^2(n)\wh{w}(n)\; dt\Big|,\]
\[\wt{B}_9(k) =\Big|\sum_{n}\int_0^{t_k} \chi_{k}(n)n\wh{u}_1(-n)\wh{u}_2(-n)\wh{w}(n)\chi_k(n)\wh{w}(n) \;dt\Big|\]
and
\[\wt{B}_{10}(k) =\Big|\sum_{n}\int_0^{t_k} \chi_k(n)\overline{\wh{N}}_{2,2}(u_1,u_2,w) \chi_k(n)\wh{w}(n) \;dt\Big|.\]
Similarly as the estimation of $B_1(k) + B_2(k)$ in the proof of Proposition \ref{prop:energy2-2}, we have
\[\wt{B}_1(k)+\wt{B}_2(k) \lesssim \norm{u_2}_{F^{\frac32+}(T)}\sum_{|k-k'| \le 5} \norm{P_{k'}w}_{F_{k'}(T)}^2.\]
For $\wt{B}_3(k)$ and $\wt{B}_4(k)$, we divide the summation range into
\[\sum_{\substack{|k_1 -k|\le 5 \\ |k_2 -k|\le 5}} + \sum_{\substack{k_2 \le k - 10 \\ |k_1 -k|\le 5}}+\sum_{\substack{k_1 \ge k + 10 \\ |k_1 -k_2|\le 5}}.\]
On the first summation, $\wt{B}_3(k)$ and $\wt{B}_4(k)$ are bounded by
\[\norm{u_2}_{F^{\frac32}(T)}\sum_{|k-k'| \le 5} \norm{P_{k'}w}_{F_{k'}(T)}^2,\]
by using the same way to the estimation of $B_3(k)$ and $B_4(k)$ in the proof of Proposition \ref{prop:energy2-2}. On the rest summations, we have from \eqref{eq:energy2-1.2} and the Cauchy-Schwarz inequality that
\[\begin{aligned}
&\sum_{k_2 \le k- 10} 2^{3k_2/2}\norm{P_{k_2}w}_{F_{k_2}(T)}\sum_{|k_1 - k| \le 5}\norm{P_{k_1}u_2}_{F_{k_1}(T)}\norm{P_{k}w}_{F_k(T)}\\
+&2^{k/2}\norm{P_{k}w}_{F_k(T)}\sum_{\substack{k_1 \ge k+ 10 \\|k_1 - k_2| \le 5}}2^{k_1}\norm{P_{k_1}u_2}_{F_{k_1}(T)}\norm{P_{k_2}w}_{F_{k_2}(T)}\\
\lesssim&\norm{w}_{F^{\frac32+}(T)}\sum_{|k_1 - k| \le 5}\norm{P_{k_1}u_2}_{F_{k_1}(T)}\norm{P_{k}w}_{F_k(T)} \\
+& 2^{-(s+\varepsilon)k}\norm{P_kw}_{F_k(T)}\norm{u_2}_{F^{\frac32+}(T)}\norm{w}_{F^s(T)},
\end{aligned}\]
for $s \ge 0$ and $0 < \varepsilon \ll 1$, and hence we obtain
\[\sum_{k \ge 1} 2^{2sk} \sup_{t_k \in [0,T]} (\wt{B}_3(k) + \wt{B}_4(k)) \lesssim \norm{u_2}_{F^{\frac32+}(T)}\norm{w}_{F^s(T)}^2 + \norm{u_2}_{F^s(T)}\norm{w}_{F^s(T)}^2,\]
whenever $s > \frac32$, and
\[\sum_{k \ge 1}  \sup_{t_k \in [0,T]} (\wt{B}_3(k) + \wt{B}_4(k)) \lesssim \norm{u_2}_{F^{\frac32+}(T)}\norm{w}_{F^0(T)}^2,\]
at $L^2$-level.

For $\wt{B}_5(k)$, by using \eqref{eq:energy2-1.1} and the Cauchy-Schwarz inequality that
\[\wt{B}_5(k) \lesssim \norm{u_2}_{F^{\frac32}(T)}\sum_{|k-k'| \le 5}\norm{P_{k'}w}_{F_{k'}(T)}^2.\]
For $\wt{B}_6(k)$, we use \eqref{eq:energy2-1.1} and \eqref{eq:energy2-1.2}, respectively, to obtain
\[\begin{aligned}
\wt{B}_6(k) & \lesssim \sum_{|k-k'|\le 5}2^{3k/2}\norm{P_{k'}u_2}_{F_{k'}(T)}\norm{P_{k'}w}_{F_{k'}(T)}^2\\
&+ \sum_{k_1 \le k-10}2^{5k_1/2}\norm{P_{k'}u_2}_{F_{k'}(T)}\sum_{|k-k'|\le 5}2^{-k}\norm{P_{k'}w}_{F_{k'}(T)}^2\\
&\lesssim \norm{u_2}_{F^{\frac32+}(T)}\sum_{|k-k'|\le 5}2^{-k}\norm{P_{k'}w}_{F_{k'}(T)}^2.
\end{aligned}\]
For $\wt{B}_7(k)$ and $\wt{B}_8(k)$, since much more derivatives are taken on $P_{k_2}u_1$ and $P_{k_2}u_2$ than $P_{k_1}w$ and $P_kw$, we may assume $k_2 = \max(k_1,k_2,k)$. We use Lemma \ref{lem:energy2-1} and the Cauchy-Schwarz inequality to obtain that\footnote{For simplicity, we estimate the dominant term for each case.}
\[\begin{aligned}
\wt{B}_7(k) + \wt{B}_8(k) &\lesssim \sum_{k_1 \le k -10}2^{-k_1/2}\norm{P_{k_1}w}_{F_{k_1}(T)}\sum_{|k-k'|\le 5}2^{2k}\norm{P_{k'}u_1}_{F_{k'}(T)}\norm{P_{k'}w}_{F_{k'}(T)}\\
&+2^{-k/2}\norm{P_{k}w}_{F_k(T)}\sum_{|k_1-k_2|\le 5}2^{2k_2}\norm{P_{k_1}w}_{F_{k_1}(T)}\norm{P_{k_2}u_2}_{F_{k'}(T)}\\
&+\sum_{|k-k'|\le 5}2^{3k/2}\norm{P_{k'}u_2}_{F_{k'}(T)}\norm{P_{k'}w}_{F_{k'}(T)}^2\\
&\lesssim \norm{w}_{F^0(T)}\sum_{|k-k'|\le 5}2^{2k}\norm{P_{k'}u_1}_{F_{k'}(T)}\norm{P_{k'}w}_{F_{k'}(T)} \\
&+2^{-(s+1/2)k}\norm{P_kw}_{F_k(T)}\norm{u_2}_{F^2(T)}\norm{w}_{F^s(T)} + \norm{u_2}_{F^{\frac32}(T)}\sum_{|k-k'|\le 5}\norm{P_{k'}w}_{F_{k'}(T)}^2,
\end{aligned}\]
which implies
\[\sum_{k \ge 1} 2^{2sk} \sup_{t_k \in [0,T]} (\wt{B}_7(k) + \wt{B}_8(k)) \lesssim \norm{u_1}_{F^{2s}(T)}\norm{w}_{F^0(T)}\norm{w}_{F^s(T)} + \norm{u_2}_{F^s(T)}\norm{w}_{F^s(T)}^2,\]
whenever $s \ge 2$, and
\[\sum_{k \ge 1}  \sup_{t_k \in [0,T]} (\wt{B}_7(k) + \wt{B}_8(k)) \lesssim \norm{u_1}_{F^{2}(T)}\norm{w}_{F^0(T)}^2,\]
at $L^2$-level.

For $\wt{B}_9(k)$, since
\[\left|\sum_{n} \chi_{k}(n)n\wh{u}_1(-n)\wh{u}_2(-n)\wh{w}(n)\chi_k(n)\wh{w}(n) \right| \lesssim \norm{u_1(t)}_{H^{\frac12}}\norm{u_1(t)}_{H^{\frac12}} \norm{P_{k}w}_{L_x^2}^2,\]
by embedding property \eqref{eq:small data1.1}, we obtain
\[\wt{B}_9(k) \lesssim \norm{u_1}_{F^{\frac12}(T)}\norm{u_2}_{F^{\frac12}(T)}\norm{P_kw}_{F_k(T)}^2.\]
For $\wt{B}_10(k)$, it suffices to consider
\begin{equation}\label{eq:energy2-3.3}
\sum_{k_1,k_2,k_3 \ge 0}2^k\Big|\sum_{n,\overline{\N}_{3,n}}\int_0^{t_k} \chi_{k_1}(n_1)\wh{u}_2(n_1)\chi_{k_2}(n_2)\wh{u}_2(n_2)\chi_{k_3}(n_3)\wh{w}(n_3)\chi_k^2(n)\wh{w}(n) \;dt\Big|.
\end{equation}
Without loss of generality, we assume that $k_1 \le k_2$. If $k = \max(k_1,k_2,k_3,k)$, by using \eqref{eq:energy-cubic1} and \eqref{eq:energy-cubic2}, we first have
\[\begin{aligned}
\eqref{eq:energy2-3.3} &\lesssim \sum_{|k-k'| \le 5}2^{3k/2}\norm{P_{k'}u_2}_{F_{k'}(T)}^2\norm{P_{k'}w}_{F_{k'}(T)}^2\\
&+ \sum_{k_1 \le k_2 - 10}2^{k_1/2}\norm{P_{k_1}u_2}_{F_{k_1}(T)} \sum_{|k-k'|\le 5}\norm{P_{k'}u_2}_{F_{k'}(T)}\norm{P_{k'}w}_{F_{k'}(T)}^2\\
&+ \sum_{k_3 \le k_1 - 10}2^{k_3/2}\norm{P_{k_3}w}_{F_{k_3}(T)} \sum_{|k-k'|\le 5}\norm{P_{k'}u_2}_{F_{k'}(T)}^2\norm{P_{k'}w}_{F_{k'}(T)}\\
&\lesssim \norm{u_2}_{F^{\frac34}(T)}^2\sum_{|k-k'|\le 5}\norm{P_{k'}w}_{F_{k'}(T)}^2 \\
&+ \norm{w}_{F^0(T)}\norm{u_2}_{F^{\frac12}(T)}\sum_{|k-k'|\le 5}\norm{P_{k'}u_2}_{F_{k'}(T)}\norm{P_{k'}w}_{F_{k'}(T)}.
\end{aligned}\]
Moreover, by using \eqref{eq:energy-cubic3} and \eqref{eq:energy-cubic4}, we also obtain
\[\begin{aligned}
\eqref{eq:energy2-3.3} &\lesssim \sum_{\substack{k_2 \le k- 10\\k_1 \le k_2 - 10}}\norm{P_{k_1}u_2}_{F_{k_1}(T)}\norm{P_{k_2}u_2}_{F_{k_2}(T)}\sum_{|k-k'| \le 5}\norm{P_{k'}w}_{F_{k'}(T)}^2\\
&+\sum_{\substack{k_2 \le k- 10\\|k_1 -k_2|\le 5}}2^{k_2/2}\norm{P_{k_1}u_2}_{F_{k_1}(T)}\norm{P_{k_2}u_2}_{F_{k_2}(T)}\sum_{|k-k'| \le 5}\norm{P_{k'}w}_{F_{k'}(T)}^2\\
&+\sum_{\substack{k_1 \le k- 10\\k_3 \le k- 10\\|k_1-k_3|\le 5}}2^{k_1/2}\norm{P_{k_1}u_2}_{F_{k_1}(T)}\norm{P_{k_3}w}_{F_{k_2}(T)}\sum_{|k-k'| \le 5}\norm{P_{k'}u_2}_{F_{k'}(T)}\norm{P_{k'}w}_{F_{k'}(T)}\\
&+\sum_{\substack{k_1 \le k- 10\\k_3 \le k- 10\\k_1\le k_3 -10}}\norm{P_{k_1}u_2}_{F_{k_1}(T)}\norm{P_{k_3}w}_{F_{k_2}(T)}\sum_{|k-k'| \le 5}\norm{P_{k'}u_2}_{F_{k'}(T)}\norm{P_{k'}w}_{F_{k'}(T)}\\
&+\sum_{\substack{k_1 \le k- 10\\k_3 \le k- 10\\k_3\le k_1 -10}}\norm{P_{k_1}u_2}_{F_{k_1}(T)}\norm{P_{k_3}w}_{F_{k_2}(T)}\sum_{|k-k'| \le 5}\norm{P_{k'}u_2}_{F_{k'}(T)}\norm{P_{k'}w}_{F_{k'}(T)}\\
&\lesssim \norm{u_2}_{F^{\frac14+}(T)}^2\sum_{|k-k'|\le 5}\norm{P_{k'}w}_{F_{k'}(T)}^2 \\
&+ \norm{w}_{F^{\frac14}(T)}\norm{u_2}_{F^{\frac14}(T)}\sum_{|k-k'|\le 5}\norm{P_{k'}u_2}_{F_{k'}(T)}\norm{P_{k'}w}_{F_{k'}(T)}.
\end{aligned}\]
If $k \neq \max(k_1,k_2,k_3,k)$, we use \eqref{eq:energy-cubic2}, \eqref{eq:energy-cubic3} and \eqref{eq:energy-cubic4} to obtain that
\[\begin{aligned}
\eqref{eq:energy2-3.3} &\lesssim 2^{3k/2}\norm{P_kw}_{F_k(T)}\sum_{\substack{k_3 \ge k + 10\\|k_3-k'| \le 5}}2^{-k_3}\norm{P_{k'}u_2}_{F_{k'}(T)}^2\norm{P_{k'}w}_{F_{k'}(T)}\\
&+\sum_{\substack{k_1 \le k_2- 10\\|k_1 -k|\le 5}}2^{3k/2}\norm{P_{k_1}u_2}_{F_{k_1}(T)}\norm{P_{k}w}_{F_k(T)}\sum_{|k_2-k'| \le 5}2^{-k_2}\norm{P_{k'}u_2}_{F_{k'}(T)}\norm{P_{k'}w}_{F_{k'}(T)}\\
&+\sum_{k_1 \le k - 10}2^{k}\norm{P_{k_1}u_2}_{F_{k_1}(T)}\norm{P_{k}w}_{F_{k_2}(T)}\sum_{\substack{ k_2 \ge k+10 \\|k_2-k'| \le 5}}2^{-k_2}\norm{P_{k'}u_2}_{F_{k'}(T)}\norm{P_{k'}w}_{F_{k'}(T)}\\
&+\sum_{\substack{k_1 \le k_2- 10\\k \le k_1 - 10}}2^{k}\norm{P_{k_1}u_2}_{F_{k_1}(T)}\norm{P_{k}w}_{F_{k_2}(T)}\sum_{|k_2-k'| \le 5}2^{-k_2}\norm{P_{k'}u_2}_{F_{k'}(T)}\norm{P_{k'}w}_{F_{k'}(T)}\\
&+\sum_{\substack{k_3 \le k_2- 10\\|k_3 -k|\le 5}}2^{3k/2}\norm{P_{k_3}w}_{F_{k_3}(T)}^2\sum_{|k_2-k'| \le 5}2^{-k_2}\norm{P_{k'}u_2}_{F_{k'}(T)}^2\\
&+\sum_{k_3 \le k - 10}2^{k}\norm{P_{k_3}w}_{F_{k_2}(T)}\norm{P_{k}w}_{F_{k_3}(T)}\sum_{\substack{k_2 \ge k + 10 \\ |k_2-k'| \le 5}}2^{-k_2}\norm{P_{k'}u_2}_{F_{k'}(T)}^2\\
&+\sum_{\substack{k_3 \le k_2- 10\\k \le k_3 -10}}2^k\norm{P_{k_3}w}_{F_{k_2}(T)}\norm{P_{k}w}_{F_{k_3}(T)}\sum_{|k_2-k'| \le 5}2^{-k_2}\norm{P_{k'}u_2}_{F_{k'}(T)}^2\\
&\lesssim \norm{u_2}_{F^{\frac14}(T)}^2\sum_{|k-k'|\le 5}\norm{P_{k'}w}_{F_{k'}(T)}^2 \\
&+\norm{u_2}_{F^{\frac14}(T)}\norm{w}_{F^{\frac14}(T)}\sum_{|k-k'|\le 5}\norm{P_{k'}u_2}_{F_{k'}(T)}\norm{P_{k'}w}_{F_{k'}(T)} \\
&+2^{-(s+\varepsilon)k}\norm{P_kw}_{F_k(T)}\norm{u_2}_{F^{0+}(T)}\norm{u_2}_{F^s(T)}\norm{w}_{F^0(T)},
\end{aligned}\]
for $s \ge 0$ and $0 < \varepsilon \ll 1$. Hence we conclude that
\[\sum_{k \ge 1} 2^{2sk} \sup_{t_k \in [0,T]} (\wt{B}_9(k) + \wt{B}_{10}(k)) \lesssim \norm{u_2}_{F^{\frac34}(T)}^2\norm{w}_{F^s(T)}^2,\]
for $s \ge 0$.

Together with all bounds of $\wt{B}_i(k)$, we obtain
\begin{equation}\label{eq:energy2-3.4}
\begin{aligned}
\sum_{k \ge 1} 2^{2sk} \sup_{t_k \in [0,T]} \left|\int_0^{t_k} \wt{E}_1 + \wt{E}_{2,1} + \wt{E}_{3,1} \; dt \right| &\lesssim \norm{u_2}_{F^{s}(T)}^2\norm{w}_{F^s(T)}^2 +\norm{u_2}_{F^s(T)}\norm{w}_{F^s(T)}^2\\ &+\norm{u_2}_{F^{2s}(T)}\norm{w}_{F^0(T)}\norm{w}_{F^s(T)}
\end{aligned}
\end{equation}
for $s \ge 2$ and
\begin{equation}\label{eq:energy2-3.5}
\begin{aligned}
\sum_{k \ge 1} \sup_{t_k \in [0,T]} \left|\int_0^{t_k} \wt{E}_1 + \wt{E}_{2,1} + \wt{E}_{3,1} \; dt \right| &\lesssim \norm{u_2}_{F^{\frac34}(T)}^2\norm{w}_{F^0(T)}^2\\
&+\norm{u_2}_{F^{\frac32+}(T)}\norm{w}_{F^0(T)}^2,
\end{aligned}
\end{equation} 
at $L^2$-level.

Next, we estimate
\[\left|\int_0^{t_k}\wt{E}_{2,2} + \wt{E}_{3,2} \; dt \right|.\]
But, since the total number of derivatives is less than that in $\wt{E}_{2,1}$ and $\wt{E}_{3,1}$ terms, we can easily control those terms and obtain
\[\left|\int_0^{t_k} \wt{E}_{2,2} + \wt{E}_{3,2} \; dt \right| \lesssim \norm{u_2}_{F^0(T)}\sum_{|k-k'|\le5}\norm{P_{k'}w}_{F_{k'}(T)}^2,\]
which implies
\begin{equation}\label{eq:energy2-3.6}
\sum_{k \ge 1}2^{2sk} \sup_{t_k \in [0,T]}\left|\int_0^{t_k} \wt{E}_{2,2} + \wt{E}_{3,2} \; dt \right| \lesssim \norm{u_2}_{F^0(T)}\norm{w}_{F^s(T)}^2,
\end{equation}
for $s \ge 0$.

Lastly, we focus on the cubic and quartic terms given by 
\[\left|\int_0^{t_k}\wt{E}_{2,3} + \wt{E}_{3,3} \; dt \right|.\]
We first estimate 
\begin{equation}\label{eq:energy2-3.7}
\left|\sum_{n,\overline{\N}_{2,n}}\int_0^{t_k}\wh{N}_2(u_2)(n_1)\chi_k(n_2)\frac1{n_2}\wh{w}(n_2)\chi_k(n)\frac1{n}\wh{w}(n) \; dt \right|.
\end{equation}
For $N_{1,1}$ in $N_{1}$, it is enough to estimate
\begin{equation}\label{eq:energy2-3.8}
\norm{u_2}_{F^0(T)}^2\sum_{k_1 \ge 0}2^{k_1}2^{-2k}\left|\sum_{n,\overline{\N}_{2,n}}\int_0^{t_k} \chi_{k_1}(n_1)\wh{u}_2(n_1)\chi_k(n_2)\wh{w}(n_2)\chi_k(n)\wh{w}(n) \; dt\right|.
\end{equation}
Using Lemma \ref{lem:energy2-1}, we obtain
\[\begin{aligned}
\eqref{eq:energy2-3.8} &\lesssim \norm{u_2}_{F^0(T)}^2\sum_{k_1 \le k- 10}2^{k_1/2}\norm{P_{k_1}u_2}_{F_{k_1}(T)}\sum_{|k-k'|\le 5}2^{-3k}\norm{P_{k'}w}_{F_{k'}(T)}^2\\
&+\norm{u_2}_{F^0(T)}^2\sum_{|k-k'|\le 5}2^{-5k/2}\norm{P_{k'}u_2}_{F_{k'}(T)}\norm{P_{k'}w}_{F_{k'}(T)}^2\\
&\lesssim \norm{u_2}_{F^0(T)}^3\sum_{|k-k'|\le 5}\norm{P_{k'}w}_{F_{k'}(T)}^2.
\end{aligned}\]
For $N_{1,2}$ in $N_1$, it suffices to consider
\begin{equation}\label{eq:energy2-3.9}
\sum_{0 \le k_1 \le k_2 \le k_3}2^{k_3}2^{-2k}\left|\sum_{\overline{n} \in \Gamma_5(\Z)}\int_0^{t_k} \prod_{i=1}^{3}\chi_{k_i}(n_i)\wh{u}_2(n_i)\chi_k(n_4)\wh{w}(n_4)\chi_k(n)\wh{w}(n) \; dt\right|.
\end{equation}
We use \eqref{eq:energy2-2.11} to obtain at most
\[\begin{aligned}
\eqref{eq:energy2-3.9} &\lesssim \sum_{\substack{k_3 \le k + 10 \\ 0 \le k_1 \le k_2 \le k_3}}2^{k_1/2}\prod_{i=1}^3\norm{P_{k_i}u_2}_{F_{k_i}(T)}\sum_{|k-k'|\le 5}\norm{P_{k'}w}_{F_{k'}(T)}^2\\
&+\sum_{\substack{k_3 \ge k + 10 \\ 0 \le k_1 \le k_2 \le k_3}}2^{k_3}2^{k_1/2}\prod_{i=1}^3\norm{P_{k_i}u_2}_{F_{k_i}(T)}\sum_{|k-k'|\le 5}\norm{P_{k'}w}_{F_{k'}(T)}^2\\
&\lesssim \norm{u_2}_{F^{\frac12+}(T)}^3\sum_{|k-k'|\le 5}\norm{P_{k'}w}_{F_{k'}(T)}^2.
\end{aligned}\]
For $N_{1,3}$ and $N_{1,4}$ in $N_1$, we need to estimate the following term as the worst term:
\begin{equation}\label{eq:energy2-3.10}
\sum_{0 \le k_1 \le k_2}2^{3k_2}2^{-2k}\left|\sum_{\overline{n} \in \Gamma_4(\Z)}\int_0^{t_k} \prod_{i=1}^{2}\chi_{k_i}(n_i)\wh{u}_2(n_i)\chi_k(n_3)\wh{w}(n_3)\chi_k(n)\wh{w}(n) \; dt\right|.
\end{equation}
We roughly estimate \eqref{eq:energy2-3.10} by using the Cauchy-Schwarz inequality to obtain
\[\begin{aligned}
\eqref{eq:energy2-3.10} &\lesssim \sum_{\substack{k_2 \le k + 10 \\ 0 \le k_1 \le k_2}}2^{k_1/2}2^{3k_2/2}\prod_{i=1}^2\norm{P_{k_i}u_2}_{F_{k_i}(T)}\sum_{|k-k'|\le 5}\norm{P_{k'}w}_{F_{k'}(T)}^2\\
&+\sum_{\substack{k_2 \ge k + 10 \\ |k_1 -k_2|\le 5}}2^{3k_2}\prod_{i=1}^2\norm{P_{k_i}u_2}_{F_{k_i}(T)}\sum_{|k-k'|\le 5}\norm{P_{k'}w}_{F_{k'}(T)}^2\\
&\lesssim \norm{u_2}_{F^{\frac32+}(T)}^2\sum_{|k-k'|\le 5}\norm{P_{k'}w}_{F_{k'}(T)}^2.
\end{aligned}\]
Hence we have
\[\sum_{k \ge 1} 2^{2sk} \sup_{t_k \in [0,T]} \eqref{eq:energy2-3.7} \lesssim \left(\norm{u_2}_{F^{\frac32+}(T)}^2 + \norm{u_2}_{F^{\frac12+}(T)}^3\right) \norm{w}_{F^s(T)}^2,\]
for $s \ge 0$.

For the rest terms in $\wt{E}_{2,3}$ and  $\wt{E}_{3,3}$, by the symmetry of $n_3$ and $n$ variables, it is enough to consider
\begin{equation}\label{eq:energy2-3.11}
\left|\sum_{n,\overline{\N}_{2,n}}\int_0^{t_k}\wh{u}_2(n_1)\chi_k(n_2)\frac1{n_2}\wh{N}_2(w)(n_2)\chi_k(n)\frac1{n}\wh{w}(n) \; dt \right|.
\end{equation}
For $N_{1,1}$ in $N_1$, similarly as the estimation of \eqref{eq:energy2-3.8}, we obtain
\[\norm{u_2}_{F^0(T)}^3\sum_{|k-k'|\le 5}\norm{P_{k'}w}_{F_{k'}(T)}^2.\]
For $N_{1,2}$ in $N_1$, we need to estimate
\begin{equation}\label{eq:energy2-3.12}
\sum_{\substack{0 \le k_1 \le k_2 \le k_3\\k_4 \ge 0}}2^{-k}\left|\sum_{\overline{n} \in \Gamma_5(\Z)}\int_0^{t_k} \prod_{i=1}^{3}\chi_{k_i}(n_i)\wh{u}_2(n_i)\chi_{k_4}(n_4)\wh{w}(n_4)\chi_k^2(n)\wh{w}(n) \; dt\right|.
\end{equation}
If $k = \max(k_1,k_2,k_3,k_4,k)$ and $|k-k_4|\le 5$, we use \eqref{eq:energy2-2.11} to obtain at most
\[\begin{aligned}
\eqref{eq:energy2-3.12} &\lesssim \sum_{0 \le k_1 \le k_2 \le k_3}2^{k_1/2}\prod_{i=1}^3\norm{P_{k_i}u_2}_{F_{k_i}(T)}\sum_{|k-k'|\le 5}\norm{P_{k'}w}_{F_{k'}(T)}^2\\
&\lesssim \norm{u_2}_{F^{\frac16+}(T)}^3\sum_{|k-k'|\le 5}\norm{P_{k'}w}_{F_{k'}(T)}^2.
\end{aligned}\]
If $|k_3-k|\le 5$, similarly, we obtain
\[\begin{aligned}
\eqref{eq:energy2-3.12} &\lesssim \sum_{\substack{0 \le k_1 \le k_2  \\ k_4 \ge 0}}2^{k_1/2}2^{k_2/2}2^{-k_4/2}\prod_{i=1}^2\norm{P_{k_i}u_2}_{F_{k_i}(T)}\norm{P_{k_4}w}_{F_{k_4}(T)}\sum_{|k-k'|\le 5}\norm{P_{k'}u_2}_{F_{k'}(T)}\norm{P_{k'}w}_{F_{k'}(T)}\\
&\lesssim \norm{u_2}_{F^{\frac12}(T)}\norm{w}_{F^0(T)}\sum_{|k-k'|\le 5}\norm{P_{k'}u_2}_{F_{k'}(T)}\norm{P_{k'}w}_{F_{k'}(T)}.
\end{aligned}\]
On the other hand, if $k \neq \max(k_1,k_2,k_3,k_4,k)$, we use \eqref{eq:energy2-2.11} to obtain that
\[\begin{aligned}
\eqref{eq:energy2-3.12} &\lesssim \sum_{k_1,k_3 \ge 0}2^{k_1/2}2^{k_3/2}2^{-k/2}\norm{P_{k_1}u_2}_{F_{k_1}(T)}\norm{P_{k_4}w}_{F_{k_4}(T)}\norm{P_{k}w}_{F_k(T)}\sum_{\substack{k_3 \ge k_4 + 10\\|k_2-k_3|\le 5|}}\norm{P_{k_2}u_2}_{F_{k_2}(T)}^2\\
&+\sum_{0 \le k_1 \le k_2}2^{k_1/2}2^{k_2/2}2^{-k/2}\prod_{i=1}^2\norm{P_{k_i}u_2}_{F_{k_i}(T)}\norm{P_{k}w}_{F_k(T)}\sum_{|k_3-k_4|\le 5}\norm{P_{k_3}u_2}_{F_{k_3}(T)}\norm{P_{k_4}w}_{F_{k_4}(T)}\\
&\lesssim 2^{-(s+\frac14)k}\norm{P_kw}_{F_k(T)}\left(\norm{u_2}_{F^{\frac12}(T)}^3\norm{w}_{F^s(T)} + \norm{u_2}_{F^{\frac12}(T)}^2\norm{u_2}_{F^{s}(T)}\norm{w}_{F^0(T)}\right),
\end{aligned}\]
for $s \ge 0$.

Now we consider $N_{1,3}$ and $N_{1,4}$ portions in $N_1$.
\begin{remark}\label{rem:resonant4}
Similarly as Remark \ref{rem:resonant3}, we need to check carefully the cubic resonant interaction components. From \eqref{eq:energy-nonlinear2-3} and \eqref{eq:energy-nonlinear2-4} and the cubic resonance relation, there are following terms as the cubic resonant terms:
\[\sum_{n_1 \in \Z}\wh{u}_2(n_1)\chi_k(n+n_1)(\wh{w}(-n_1)\wh{u}_1(-n)+\wh{u}_2(-n_1)\wh{w}(-n))\chi_k(n)n\wh{w}(n),\]
\[\sum_{n_1 \in \Z}\wh{u}_2(n_1)\chi_k(n+n_1)\frac{n_1}{n+n_1}(\wh{w}(-n_1)\wh{u}_1(-n)+\wh{u}_2(-n_1)\wh{w}(-n))\chi_k(n)n\wh{w}(n),\]
\[\sum_{n_1 \in \Z}\wh{u}_2(n_1)\chi_k(n+n_1)n_1^2(\wh{w}(-n)\wh{u}_1(-n_1)+\wh{u}_2(-n)\wh{w}(-n_1))\chi_k(n)\frac1n\wh{w}(n)\]
and
\[\sum_{n_1 \in \Z}\wh{u}_2(n_1)\chi_k(n+n_1)\frac{n_1^2}{n+n_1}(\wh{w}(-n)\wh{u}_1(-n_1)+\wh{u}_2(-n)\wh{w}(-n_1))\chi_k(n)\wh{w}(n).\]
Since the worst term
\[|\wh{u}_2(n_1)|^2\chi_k(n+n_1)\chi_k(n)n|\wh{w}(n)|^2\]
is real number, so this term vanishes. For the other terms, we use the Cauchy-Schwarz inequality and embedding property \eqref{eq:small data1.1} to obtain the bound at most
\[\norm{u_1}_{F^0(T)}\norm{u_2}_{F^{s+1}(T)}\norm{w}_{F^0(T)}\norm{w}_{F^s(T)},\]
by performing the summation over $k$, whenever $s \ge 0$.

Hence, in the following cubic estimates, we do not need to consider the resonant case any more.
\end{remark} 
To complete the proof of Proposition \ref{prop:energy2-3}, we need to consider 
\begin{equation}\label{eq:energy2-3.13}
\sum_{\substack{0 \le k_1 \le k_2\\k_3 \ge 0}} 2^{2k_3}2^{-k}\left|\sum_{n,\overline{N}_{3,n}}\int_0^{t_k}\chi_{k_1}(n_1)\wh{u}_2(n_1)\chi_{k_2}(n_2)\wh{u}_2(n_2)\chi_{k_3}(n_3)\wh{w}(n_3)\chi_k^2(n)\wh{w}(n) \; dt \right|,
\end{equation}
\begin{equation}\label{eq:energy2-3.14}
\sum_{\substack{0 \le k_1 \le k_3\\k_2 \ge 0}} 2^{2k_3}2^{-k}\left|\sum_{n,\overline{N}_{3,n}}\int_0^{t_k}\chi_{k_1}(n_1)\wh{u}_2(n_1)\chi_{k_2}(n_2)\wh{w}(n_2)\chi_{k_3}(n_3)\wh{u}_2(n_3)\chi_k^2(n)\wh{w}(n) \; dt \right|,
\end{equation}
\begin{equation}\label{eq:energy2-3.15}
\sum_{\substack{0 \le k_1 \le k_2\\k_3 \ge 0}} 2^{k_2}2^{2k_3}2^{-2k}\left|\sum_{n,\overline{N}_{3,n}}\int_0^{t_k}\chi_{k_1}(n_1)\wh{u}_2(n_1)\chi_{k_2}(n_2)\wh{u}_2(n_2)\chi_{k_3}(n_3)\wh{w}(n_3)\chi_k^2(n)\wh{w}(n) \; dt \right|
\end{equation}
and
\begin{equation}\label{eq:energy2-3.16}
\sum_{\substack{0 \le k_1 \le k_3\\k_2 \ge 0}}  2^{k_2}2^{2k_3}2^{-2k}\left|\sum_{n,\overline{N}_{3,n}}\int_0^{t_k}\chi_{k_1}(n_1)\wh{u}_2(n_1)\chi_{k_2}(n_2)\wh{w}(n_2)\chi_{k_3}(n_3)\wh{u}_2(n_3)\chi_k^2(n)\wh{w}(n) \; dt \right|.
\end{equation}
First we assume that $k = \max(k_1,k_2,k_3,k)$. If $|k-k_3| \le 5$, \eqref{eq:energy2-3.13} and \eqref{eq:energy2-3.14} are dominant, then by using Lemma \ref{lem:energy-cubic}, we obtain
\[\begin{aligned}
\eqref{eq:energy2-3.13} &\lesssim \sum_{|k-k'|\le 5}2^{3k/2}\norm{P_{k'}u_2}_{F_{k'}(T)}^2\norm{P_{k'}w}_{F_{k'}(T)}^2\\
&+\sum_{k_1 \le k-10}2^{k_1/2}\norm{P_{k_1}u_2}_{F_{k_1}(T)}\sum_{|k-k'|\le 5}\norm{P_{k'}u_2}_{F_{k'}(T)}\norm{P_{k'}w}_{F_{k'}(T)}^2\\
&+\sum_{\substack{k_2 \le k - 10\\ |k_1 - k_2| \le 5}}2^{k_1/2}\norm{P_{k_1}u_2}_{F_{k_1}(T)}^2\sum_{|k-k'|\le 5}\norm{P_{k'}w}_{F_{k'}(T)}^2\\
&+\sum_{\substack{k_2 \le k - 10\\ k_1 \le k_2 - 10}}\norm{P_{k_1}u_2}_{F_{k_1}(T)}\norm{P_{k_2}u_2}_{F_{k_2}(T)}\sum_{|k-k'|\le 5}\norm{P_{k'}w}_{F_{k'}(T)}^2\\
&\lesssim \norm{u_2}_{F^{\frac14+}(T)}^2\sum_{|k-k'|\le 5}\norm{P_{k'}w}_{F_{k'}(T)}^2
\end{aligned}\] 
and
\[\begin{aligned}
\eqref{eq:energy2-3.14} &\lesssim \sum_{k_2 \le k-10}2^{k_2/2}\norm{P_{k_2}w}_{F_{k_2}(T)}\sum_{|k-k'|\le 5}\norm{P_{k'}u_2}_{F_{k'}(T)}^2\norm{P_{k'}w}_{F_{k'}(T)}\\
&+\sum_{\substack{k_1, k_2 \le k - 10\\ |k_1 - k_2| \le 5}}2^{k_1/2}\norm{P_{k_1}u_2}_{F_{k_1}(T)}\norm{P_{k_2}w}_{F_{k_2}(T)}\sum_{|k-k'|\le 5}\norm{P_{k'}u_2}_{F_{k'}(T)}\norm{P_{k'}w}_{F_{k'}(T)}\\
&+\sum_{\substack{k_1, k_2 \le k - 10\\ k_1 \le k_2 - 10}}\norm{P_{k_1}u_2}_{F_{k_1}(T)}\norm{P_{k_2}w}_{F_{k_2}(T)}\sum_{|k-k'|\le 5}\norm{P_{k'}u_2}_{F_{k'}(T)}\norm{P_{k'}w}_{F_{k'}(T)}\\
&+\sum_{\substack{k_1, k_2 \le k - 10\\ k_2 \le k_1 - 10}}\norm{P_{k_1}u_2}_{F_{k_1}(T)}\norm{P_{k_2}w}_{F_{k_2}(T)}\sum_{|k-k'|\le 5}\norm{P_{k'}u_2}_{F_{k'}(T)}\norm{P_{k'}w}_{F_{k'}(T)}\\
&\lesssim (\norm{u_2}_{F^{\frac12+}(T)}\norm{w}_{F^{0}(T)}+\norm{u_2}_{F^{\frac12}(T)}\norm{w}_{F^{0+}(T)})\sum_{|k-k'|\le 5}\norm{P_{k'}u_2}_{F_{k'}(T)}\norm{P_{k'}w}_{F_{k'}(T)}.
\end{aligned}\] 
If $k \neq \max(k_1,k_2,k_3,k)$, \eqref{eq:energy2-3.15} and \eqref{eq:energy2-3.16} are dominant. If $|k_1 - k_2| \le 5$ and $|k_2-k_3|\le 5$, we do not distinguish between \eqref{eq:energy2-3.15} and \eqref{eq:energy2-3.16}, and by using \eqref{eq:energy-cubic2},  we obtain that
\[\begin{aligned}
\eqref{eq:energy2-3.15} &\lesssim 2^{-3k/2}\norm{P_kw}_{F_k(T)}\sum_{\substack{k_3 \ge k+10 \\ |k_3 - k'|\le 5}}2^{2k_3}\norm{P_{k'}u_2}_{F_{k'}(T)}^2\norm{P_{k'}w}_{F_{k'}(T)}\\
&\lesssim 2^{-(s+3/2)k}\norm{P_kw}_{F_k(T)}\norm{u_2}_{F^1(T)}^2\norm{w}_{F^s(T)},
\end{aligned}\]
whenever $s \ge 0$.
If $|k_2-k_3| \le 5$ and $k_1 \le k_2 - 10$, we use \eqref{eq:energy-cubic3} and \eqref{eq:energy-cubic4} to obtain that
\[\begin{aligned}
\eqref{eq:energy2-3.15} &\lesssim \sum_{\substack{k_1 \le k_2- 10 \\ |k_1-k|\le 5}}2^{-3k/2}\norm{P_{k_1}u_2}_{F_{k_1}(T)}\norm{P_kw}_{F_k(T)} \sum_{|k_2-k_3|\le 5}2^{2k_3}\norm{P_{k_2}u_2}_{F_{k_2}(T)}\norm{P_{k_3}w}_{F_{k_3}(T)} \\
&+\sum_{\substack{k_1 \le k_2- 10 \\ k_1 \le k - 10}}2^{-2k}\norm{P_{k_1}u_2}_{F_{k_1}(T)}\norm{P_kw}_{F_k(T)} \sum_{|k_2-k_3|\le 5}2^{2k_3}\norm{P_{k_2}u_2}_{F_{k_2}(T)}\norm{P_{k_3}w}_{F_{k_3}(T)}\\
&+\sum_{\substack{k_1 \le k_2- 10 \\ k_1 \ge k + 10}}2^{-2k}\norm{P_{k_1}u_2}_{F_{k_1}(T)}\norm{P_kw}_{F_k(T)} \sum_{|k_2-k_3|\le 5}2^{2k_3}\norm{P_{k_2}u_2}_{F_{k_2}(T)}\norm{P_{k_3}w}_{F_{k_3}(T)}\\
&\lesssim \norm{u_2}_{F^2(T)}\norm{w}_{F^0(T)}\sum_{|k-k'|\le5}\norm{P_{k'}u_2}_{F_{k'}(T)}\norm{P_{k'}w}_{F_{k'}(T)}\\
&+2^{-(s+3/2)k}\norm{P_kw}_{F_k(T)}\norm{u_2}_{F^2(T)}^2\norm{w}_{F^s(T)}.
\end{aligned}\]
Finally, we consider the case when $|k_1 - k_2| \le 5$ and $k_3 \le k_1 - 10$ in \eqref{eq:energy2-3.15} or $|k_1-k_3| \le 5$ and $k_2 \le k_1 - 10$ in \eqref{eq:energy2-3.16}. Since the second case is dominant, we use \eqref{eq:energy-cubic3} and \eqref{eq:energy-cubic4} to obtain that
\[\begin{aligned}
\eqref{eq:energy2-3.16} &\lesssim \sum_{\substack{k_2 \le k_3- 10 \\ |k_2-k|\le 5}}2^{-k/2}\norm{P_{k_2}w}_{F_{k_2}(T)}^2 \sum_{|k_1-k_3|\le 5}2^{k_3}\norm{P_{k_1}u_2}_{F_{k_2}(T)}^2 \\
&+\sum_{\substack{k_2 \le k_3- 10 \\ k_2 \le k - 10}}2^{-k}\norm{P_{k_2}w}_{F_{k_2}(T)}\norm{P_kw}_{F_k(T)} \sum_{|k_1-k_3|\le 5}2^{k_3}\norm{P_{k_1}u_2}_{F_{k_2}(T)}^2\\
&+\sum_{\substack{k_2 \le k_3- 10 \\ k_2 \ge k + 10}}2^{k_2}2^{-2k}\norm{P_{k_2}w}_{F_{k_2}(T)}\norm{P_kw}_{F_k(T)} \sum_{|k_1-k_3|\le 5}2^{k_3}\norm{P_{k_1}u_2}_{F_{k_2}(T)}^2\\
&\lesssim \norm{u_2}_{F^{\frac12}(T)}^2\sum_{|k-k'|\le5}\norm{P_{k'}w}_{F_{k'}(T)}^2\\
&+2^{-(s+1/2)k}\norm{P_kw}_{F_k(T)}\norm{u_2}_{F^{1+}(T)}(\norm{u_2}_{F^s(T)}\norm{w}_{F^0(T)} + \norm{u_2}_{F^{1+}(T)}\norm{w}_{F^s(T)}),
\end{aligned}\]
when $s \ge 0$.

Hence, we have
\[\begin{aligned}
\sum_{k \ge 1} 2^{2sk} \sup_{t_k \in [0,T]} \eqref{eq:energy2-3.11} &\lesssim \norm{u_2}_{F^2(T)}^2\norm{w}_{F^s(T)}^2\\
&+\norm{u_2}_{F^0(T)}\norm{u_2}_{F^{s+1}(T)}\norm{w}_{F^0(T)}\norm{w}_{F^s(T)},
\end{aligned}\]
when $s \ge 0$, and conclude that
\begin{equation}\label{eq:energy2-3.17}
\begin{aligned}
\sum_{k \ge 1} 2^{2sk} \sup_{t_k \in [0,T]}\left|\int_0^{t_k}\wt{E}_{2,3} + \wt{E}_{3,3} \; dt \right| & \lesssim \norm{u_2}_{F^2(T)}^2\norm{w}_{F^s(T)}^2\\
&+\norm{u_2}_{F^0(T)}\norm{u_2}_{F^{s+1}(T)}\norm{w}_{F^0(T)}\norm{w}_{F^s(T)}\\
&+\norm{u_2}_{F^{\frac12+}(T)}^3\norm{w}_{F^s(T)}^2.
\end{aligned}
\end{equation}
Together with \eqref{eq:energy2-3.4}, \eqref{eq:energy2-3.6} and \eqref{eq:energy2-3.17} for $s \ge 2$, and \eqref{eq:energy2-3.5}, \eqref{eq:energy2-3.6} and \eqref{eq:energy2-3.17} for $L^2$-level, we complete the proof of \eqref{eq:energy2-3.1} and \eqref{eq:energy2-3.2}, respectively.
\end{proof}

As a Corollary to Lemma \ref{lem:comparable energy2-2} and Proposition \ref{prop:energy2-3}, we obtain \emph{a priori bound} of $\norm{w}_{E^s(T)}$ for the difference of two solutions.
\begin{corollary}\label{cor:energy2-3}
Let $s \ge 2$ and $T \in (0,1]$. Then, there exists $0 < \delta \ll 1$ such that
\[\begin{aligned}
\norm{w}_{E^0(T)} &\lesssim (1+ \norm{u_{1,0}}_{H^{\frac12+}}+\norm{u_{2,0}}_{H^{\frac12+}})\norm{w_0}_{L_x^2}^2\\
&+(1+ \norm{u_1}_{F^2(T)}+\norm{u_2}_{F^2(T)})(\norm{u_1}_{F^2(T)}+\norm{u_2}_{F^2(T)})\norm{w}_{F^0(T)}^2\\
&+\left(\sum_{1 \le i \le j \le k \le 2}\norm{u_i}_{F^2(T)}\norm{u_j}_{F^2(T)}\norm{u_k}_{F^2(T)}\right)\norm{w}_{F^0(T)}^2.
\end{aligned} \]
and
\[\begin{aligned}
\norm{w}_{E^s(T)} &\lesssim (1+ \norm{u_{2,0}}_{H^{\frac12+}})\norm{w_0}_{H^s}^2\\
&+(\norm{u_1}_{F^{2s}(T)}+\norm{u_2}_{F^{2s}(T)})\norm{w}_{F^0(T)}\norm{w}_{F^s(T)}\\
&+(\norm{u_1}_{F^s(T)}+\norm{u_2}_{F^s(T)})\norm{w}_{F^s(T)}^2\\
&+(\norm{u_1}_{F^s(T)}+\norm{u_2}_{F^s(T)})(\norm{u_1}_{F^{2s}(T)}+\norm{u_2}_{F^{2s}(T)})\norm{w}_{F^0(T)}\norm{w}_{F^s(T)}\\
&+\left(\sum_{1 \le i \le j \le 2}\norm{u_i}_{F^s(T)}\norm{u_j}_{F^s(T)}\right)\norm{w}_{F^s(T)}^2\\
&+\left(\sum_{1 \le i \le j \le k \le 2}\norm{u_i}_{F^s(T)}\norm{u_j}_{F^s(T)}\norm{u_k}_{F^s(T)}\right)\norm{w}_{F^s(T)}^2,
\end{aligned}\] 
for solutions $w \in C([-T,T];H^{\infty}(\T))$ to \eqref{eq:5kdv5} and $u_1, u_2 \in C([-T,T];H^{\infty}(\T))$ to \eqref{eq:5kdv4} satisfying $\norm{u_1}_{L_T^{\infty}H_x^{\frac12+}} < \delta$ and $\norm{u_2}_{L_T^{\infty}H_x^{\frac12+}} < \delta$.
\end{corollary}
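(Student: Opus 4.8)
The claim is a direct consequence of the comparability Lemma \ref{lem:comparable energy2-2} together with the energy-identity bounds of Proposition \ref{prop:energy2-3}; the plan is simply to chain the two. First I would use comparability to replace the quantity to be bounded, $\norm{w}_{E^s(T)}^2$, by the modified energy $\wt{E}_T^s(w)$ at the cost of a harmless factor $2$, and then invoke Proposition \ref{prop:energy2-3} to dominate $\wt{E}_T^s(w)$ by the stated right-hand sides. Concretely, for $s \ge 2$ the chain reads
\[
\frac12\norm{w}_{E^s(T)}^2 \le \wt{E}_T^s(w) \lesssim \mbox{RHS of }\eqref{eq:energy2-3.2},
\]
which is precisely the second displayed inequality of the Corollary once its left-hand side is read as $\norm{w}_{E^s(T)}^2$; for the $L^2$-level one uses \eqref{eq:energy2-3.1} in place of \eqref{eq:energy2-3.2}. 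Since Proposition \ref{prop:energy2-3} already encodes all the analytic work (the $L^2$-block estimates, the commutator estimates, and the cancellation of the cubic resonant terms in Remarks \ref{rem:resonant3} and \ref{rem:resonant4}), no further estimation is needed beyond this substitution.

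The only point requiring attention is the comparability step at the $L^2$ level and under the hypotheses actually available. Lemma \ref{lem:comparable energy2-2} is stated for $s > \frac12$ and under smallness of $u_2$ in $H^s$, whereas here I only dispose of the low-regularity smallness $\norm{u_1}_{L_T^\infty H_x^{1/2+}}, \norm{u_2}_{L_T^\infty H_x^{1/2+}} < \delta$, and I also need the case $s=0$. I would therefore re-examine the proof of the comparability and observe that it in fact requires only $\norm{u_2}_{L^\infty} < \delta$ and extends to every $s \ge 0$. Indeed, each quadratic correction term in $\wt{E}_k(w)$ carries two inverse-derivative factors $\frac1{n_2}, \frac1n \sim 2^{-k}$, so that, writing it as a physical-space integral $\int_{\T} u_2\, g_k\, h_k \; dx$ with $g_k, h_k$ the frequency-$2^k$ pieces of $w$ smoothed by one derivative ($\norm{g_k}_{L^2}, \norm{h_k}_{L^2} \lesssim 2^{-k}\norm{P_k w}_{L^2}$), Cauchy--Schwarz and the Sobolev embedding $H^{1/2+}(\T) \hookrightarrow L^\infty(\T)$ give the bound $2^{-2k}\norm{u_2}_{L^\infty}\norm{P_k w}_{L^2}^2 \lesssim \delta\, 2^{-2k}\norm{P_k w}_{L^2}^2$. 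Because $2^{-2k} \le 1$, multiplying by the weight $2^{2sk}$ and summing over $k$ (with the $\sup_{t_k}$ in place, exactly as in the definitions of $\wt{E}_T^s$ and $\norm{\cdot}_{E^s(T)}$) bounds the total correction by $C\delta\,\norm{w}_{E^s(T)}^2$ for every $s \ge 0$. Choosing $\delta$ small then yields $\frac12\norm{w}_{E^s(T)}^2 \le \wt{E}_T^s(w) \le \frac32\norm{w}_{E^s(T)}^2$, now valid at the $L^2$-level as well; the smallness of $u_1$ is used in the symmetrized form of the correction described in the remark following Proposition \ref{prop:energy2-3}.

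The main obstacle is thus not a new estimate but this bookkeeping: verifying that the modified energy stays equivalent to $\norm{w}_{E^s(T)}^2$ under merely $H^{1/2+}$ smallness rather than $H^s$ smallness. The $2^{-2k}$ smoothing built into the correction terms is precisely what makes this possible, and the threshold $H^{1/2+}$ is sharp, being the weakest Sobolev space embedding into $L^\infty$ and hence the minimal regularity in which the undifferentiated low-frequency factor $\wh{u}_2(n_1)$ can be controlled. Once the comparability is secured in this sharper form, the Corollary follows by the one-line substitution described above, applied separately with \eqref{eq:energy2-3.1} for the $L^2$-bound and \eqref{eq:energy2-3.2} for the $H^s$-bound.
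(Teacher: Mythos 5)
Your proposal matches the paper's (implicit) proof exactly: the corollary is stated there as an immediate consequence of chaining Lemma \ref{lem:comparable energy2-2} with Proposition \ref{prop:energy2-3}, which is precisely your argument, and your extra bookkeeping verifying that the comparability survives at the $L^2$-level under only $H^{\frac12+}$ smallness (via the $2^{-2k}$ gain from the $\frac{1}{n_2}\frac{1}{n}$ factors and the embedding $H^{\frac12+}(\T)\hookrightarrow L^{\infty}(\T)$) is the correct way to fill in the detail the paper leaves unstated. You are also right that the left-hand sides should be read as $\norm{w}_{E^0(T)}^2$ and $\norm{w}_{E^s(T)}^2$, consistent with Corollary \ref{cor:energy2-2}.
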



\begin{thebibliography}{alpha}
\bibitem{Bourgain1993} J. Bourgain, \emph{Fourier transform restriction phenomena for certain lattice subsets and applications to nonlinear evolution equations. Parts I, II}, Geom. Funct. Anal. 3 (1993) 107--156, 209--262.

\bibitem{Guo2012} Z. Guo, \emph{Local well-posedness for dispersion generalized Benjamin-Ono equations in Sobolev spaces}, J. Differential Equations 252 (2012) 2053--2084.

\bibitem{GKK2013} Z.Guo, C. Kwak, S. Kwon, \emph{Rough solutions of the fifth-order KdV equations}, J. Funct. Anal. 265 (2013) 2791--2829.

\bibitem{IKT2008} A. Ionescu, C. Kenig, D. Tataru, \emph{Global well-posedness of the KP-I initial-value problem in the energy space}, Invent. Math. 173 (2) (2008) 265--304.

\bibitem{KPV1996} C. Kenig, G. Ponce, L. Vega, \emph{A bilinear estimate with applications to the KdV equation}, J. Amer. Math. Soc. 9 (1996) 573--603.

\bibitem{KP2015} C. Kenig, D. Pilod, \emph{Well-posedness for the fifth-order KdV equation in the energy space}, Trans. Amer. Math. Soc. 367 (2015) 2551-2612

\bibitem{Kwak2015} C. Kwak \emph{Low regularity Cauchy problem for the fifth-order modified KdV equations on $\T$},  arXiv:1510.00464 [math.AP], submitted for publication.

\bibitem{Kwon2008-1} S. Kwon, \emph{On the fifth order KdV equation: Local well-posedness and lack of uniform continuity of the solution map} J. Differential Equations, 245 (2008) 2627--2659.

\bibitem{Ponce1993} G. Ponce, \emph{Lax pairs and higher order models for water waves}, J. Differential Equations 102 (2) (1993) 360--381.

\bibitem{Tao2001} T. Tao, \emph{Multilinear weighted convolution of $L^2$ functions and applications to nonlinear dispersive equations}, Amer. J. Math. 123 (5) (2001) 839--908.
\end{thebibliography}
\end{document}